\newcommand{\RR}{\mathds{R}}
\newcommand{\NN}{\mathds{N}}
\newcommand{\eps}{\varepsilon}
\newcommand{\la}{\lambda}
\newcommand{\calA}{{\mathcal A}}
\newcommand{\calB}{{\mathcal B}}
\numberwithin{equation}{section}
\newcommand{\mE}{\mathbf{E}}
\newcommand{\mD}{\mathbf{D}}
\newcommand{\mI}{\mathbf{I}}
\newcommand{\mx}{\mathbf{x}}
\newcommand{\my}{\mathbf{y}}
\newcommand{\EE}{\mathds{E}}
\def\<{\langle}
\def\>{\rangle}
\newcommand{\LL}{\mbox{\fontfamily{phv}\selectfont L}}
\newcommand{\I}{\mbox{\fontfamily{phv}\selectfont I}}
\newcommand{\comment}[1]{{\color{blue}\shadowbox{#1}}}
\renewcommand{\comment}[1]{}
\colorlet{shadecolor}{gray!30}
\newenvironment{ana}
  {\begin{leftbar}
  \begin{shaded} }
{  \end{shaded}\end{leftbar}}
 \newcommand{\arxiv}[1]{\begin{ana}
  #1\end{ana}}
\newtheorem{theorem}{Theorem}
 \newtheorem{lemma}[theorem]{Lemma}
\newtheorem{proposition}[theorem]{Proposition}
\theoremstyle{definition}
\theoremstyle{remark}
\newtheorem{remark}[theorem]{Remark}
\theoremstyle{remark}
\theoremstyle{definition}
\title[ASEP and Quadratic Harnesses]{Asymmetric Simple Exclusion Process  with open boundaries and Quadratic Harnesses}
\author{
W{\l}odek  Bryc
}
\address{
Department of Mathematics,
University of Cincinnati,
PO Box 210025,
Cincinnati, OH 45221--0025, USA}
\email{Wlodzimierz.Bryc@UC.edu}
\author{Jacek Weso{\l}owski}
\address{ Faculty of Mathematics and Information Science
Warsaw University of Technology, pl. Politechniki 1 00-661
Warszawa, Poland}
\email{wesolo@alpha.mini.pw.edu.pl}
\begin{document}

\keywords{Exclusion process with open boundary; Markov processes; Large deviations; Askey-Wilson distribution;quadratic harness}

%\maketitle
\begin{abstract}

We show that the joint probability generating function of the stationary measure of  a finite state asymmetric exclusion process
 with open boundaries can be expressed in terms of joint moments of
 Markov processes called quadratic harnesses. We use our representation to prove the large deviations principle  for the total number of particles in the system.
  We  use the generator of the Markov process
  to show how  explicit formulas for the average occupancy of a site arise for special choices of parameters.
  We also give similar representations for limits of stationary measures as the number of sites tends to infinity.

\end{abstract}
\maketitle
\arxiv{This  is an expanded version with additional details that are omitted from the published version.}
\section{Introduction}
\subsection{Asymmetric simple exclusion process with open boundaries}\label{Sec:ASEP}

Asymmetric simple exclusion process (ASEP) is a Markov model for  random particles that cannot occupy the same position, and
tend to move to the adjacent site with the rate that is larger to the right   than to the left. There are several versions of
the model; for example, \citet{spitzer1970interaction} considers ASEP on the infinite number and on the finite number of sites.
In this paper we are mostly interested in a particle system on a finite lattice of $N\geq 2$ points $S=\{1,\dots,N\}$,
 where each site $j\in S$ can have only $\tau_j=0$ or $\tau_j=1$ particles. The term ``open boundary''
refers to the fact that particles can be inserted and removed from both boundary points. This version of ASEP
appeared in \citet[Section 3]{Liggett-1975} and was extensively studied in physics,
  see e.g. \cite{bertini2002macroscopic,blythe2000exact,derrida1992exact,derrida2004current,derrida1993exact,derrida2001free,derrida2002exact}.
  A survey paper of \citet{blythe2007nonequilibrium} gives additional references and motivation.

  An informal description of the exclusion process is that particles are placed at rate $\alpha>0$
in position $1$, and at rate $\delta\geq 0$ in position $N$, provided the location is empty. Particles are also removed at rate $\gamma\geq 0$  from
location $1$ and removed  at rate $\beta>0$ from  site $N$.
Particles attempt to move to the nearest neighbor: to the right with rate 1 and to the left with rate $q\geq 0$; however they cannot move if the neighbor site is already occupied.
 Parameters $\alpha,\beta,\gamma,\delta$ describe behavior at the boundaries; parameter $q$ determines the degree of asymmetry, see Fig. \ref{Fig1}.
 Exclusion process is symmetric, when
 $q=1$; the case $q>1$ is similar to  $q<1$ due to ``particles-holes duality'', see e.g. discussion of this  point in
 \cite{uchiyama2004asymmetric}.  Throughout this paper we will assume that $q<1$.

\begin{figure}[H]
  \begin{tikzpicture}[scale=.9]
\draw [fill=black, ultra thick] (.5,1) circle [radius=0.2];
\draw [fill=black, ultra thick] (2.5,1) circle [radius=0.2];
  \draw [ultra thick] (1.5,1) circle [radius=0.2];
  \draw [ultra thick] (5,1) circle [radius=0.2];
   \draw [fill=black, ultra thick] (6,1) circle [radius=0.2];

    \draw [ultra thick] (7,1) circle [radius=0.2];
      \draw [fill=black, ultra thick] (9.5,1) circle [radius=0.2];
   \draw [ultra thick] (10.5,1) circle [radius=0.2];
     \draw[->] (-1,2.3) to [out=-20,in=135] (.5,1.5);
   \node [above right] at (-.2,2) {$\alpha$};
     \draw[->] (10.5,1.5) to [out=45,in=200] (12,2.3);
     \node [above left] at (11.2,2) {$\beta$};
            \node  at (8.25,1) {$\dots$};  \node  at (3.75,1) {$\dots$};
      \node [above] at (6.5,1.8) {$1$};
      \draw[->,thick] (6.1,1.5) to [out=45,in=135] (7,1.5);
        \node [above] at (5.5,1.8) {$q$};
            \draw[<-] (5,1.5) to [out=45,in=135] (5.9,1.5);
                 \node [above] at (10,1.8) {$1$};
                \draw[->,thick] (9.6,1.5) to [out=45,in=135] (10.4,1.5);
               \node [above] at (9,1.8) {$q$};
                 \draw[<-] (8.4,1.5) to [out=45,in=135] (9.4,1.5);
       \draw[<-] (-1,-.3) to [out=0,in=-135] (.5,0.6);
   \node [below right] at (-.2,0) {$\gamma$};
        \draw[<-] (10.5,.6) to [out=-45,in=180] (12,-.3);
   \node [below left] at (11.2,0) {$\delta$};
    \node [above] at (0.5,0) {$1$};
    \node [above] at (1.5,0) {$2$};
   \node [above] at (2.5,0) {$3$};
     \node [above] at (3.75,0) {$\dots$};
   \node [above] at (5,0) {$k-1$};
     \node [above] at (6,0) {$k$};
       \node [above] at (7,0) {$k+1$};
        \node [above] at (8.25,0) {$\dots$};
         \node [above] at (10.5,0) {$N$};
          \node [above] at (9.5,0) {$N-1$};
\end{tikzpicture}
\caption{Asymmetric Simple Exclusion Process with open boundary and  parameters $\alpha,\beta,\gamma,\delta, q$.
Black disks represent occupied sites. \label{Fig1}}
\end{figure}
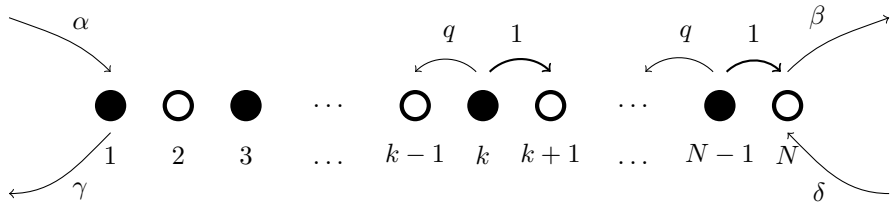

Time evolution of such a system is described by a continuous time Markov process $(\tau_1(t),\dots,\tau_N(t))$ with finite state space  $\{0,1\}^N$, see e.g. \cite[Formulas (2.1)-(2.3)]{sandow1994partially}.
We do not provide the details,
 as we are interested only in the (unique) invariant probability measure $\pi_N$ of the Markov chain $(\tau_1(t),\dots,\tau_N(t))$.
 Using the notation borrowed from statistical physics, by $\langle \cdot\rangle_N$
  we denote the average with respect to this  probability measure and  by $(\tau_1,\dots,\tau_N)\in\{0,1\}^N$ we denote
  the random vector with the invariant distribution $\pi_N$. For example, $\<\tau_k\>_N$
  is the average occupancy of site $k$ with respect to $\pi_N$.

\subsection{Overview of the main results}
\comment{Referee asked for an overview of the paper!}

Our  first main result, Theorem \ref{T-rep} in Section \ref{sect:AMP},     relates
 joint probability generating function of $\tau_1,\dots,\tau_N$  to the joint moments of a certain
Markov processes $(Z_t)_{t\geq 0}$, see \eqref{QHsolution}.
The Markov process  comes from \cite{Bryc-Wesolowski-08} where it is  called a quadratic harness. It
is described in more detail in Section \ref{AMP}. In Section \ref{Sec:MS}, we review the matrix method of \cite{derrida1993exact}, which serves as an intermediate step in the proof of Theorem \ref{T-rep}.

In Section \ref{Sect:LD}, we use  representation \eqref{QHsolution} to prove the large deviations principle  for the total number of particles in the system. Theorem \ref{T-LDP} extends the results of
\cite[Section 3.5]{derrida2003exact} to the cases where particles can leave left boundary with rate $\gamma\ne 0$ and can be inserted at the right boundary with rate $\delta\ne 0$.
The   large deviations rate function depends on the parameters of the ASEP only through the support of random variable $Z_t$, see \eqref{LL-supp}.

In Section \ref{Sect:miracle} we  give an integral formula for the average density profile when $\gamma=\delta=0$ and $\alpha+\beta>1-q$ and point out that the formula simplifies to a product
form \eqref{Schutz-like}
 when $q=0$.  The proof of  Theorem \ref{T-miracle} uses
 the generator of the Markov process $(X_t)$, which is given in Theorem \ref{Thm-gen-biPoisson}.

In Section \ref{Sect:semininf} we pass to the limit   as the number of sites $N$ tends to infinity.
The main result of this section, Theorem \ref{T-semiinfinite}, represents the joint generating function of the limiting measure of the ASEP on the semi-infinite lattice in
 terms of the joint moments of a quadratic harness.

\subsection{Matrix solution}\label{Sec:MS}
The celebrated matrix method of \citet{derrida1993exact} introduces
a probability measure on $\{0,1\}^N$ using a pair  $\mD$ and $\mE$ of infinite matrices with $\mD$ representing the occupied site,
 $\mE$ representing an empty site, and a pair of vectors, where
  $\langle W|$ is a row vector and $|V\rangle$ is a column vector. It  turns out that this is the invariant measure
  $\pi_N$ introduced in Section \ref{Sec:ASEP}.
  For our purposes it is convenient to recast the  expression from \cite{derrida1993exact}
   into the formula for the joint generating function
\begin{equation}
\label{MatrixSolution}
\left\langle \prod_{j=1}^N t_j^{\tau_j}\right\rangle_N=\frac{1}{K_N}\langle W| (\mE+t_1\mD)(\mE+t_2\mD)\dots(\mE+t_N\mD)|V\rangle.
\end{equation}
Here
 \begin{equation}
   \label{KN}
   K_N=\langle W| (\mE+ \mD)^N|V\rangle
 \end{equation} is the normalizing constant, which in the physics literature is
 called the partition function and  is usually denoted by letter $Z$;
 in this paper, as in \cite{Bryc-Wesolowski-08},  we reserve letter $Z$ for a Markov process that we  introduce in Section \ref{AMP}.

In ref. \cite{derrida1993exact} the authors verify  that $\langle \prod_{j=1}^N t_j^{\tau_j}\rangle_N$ is indeed the
 probability generating function of the invariant
 distribution $\pi_N$ of the exclusion process with parameters $\alpha,\beta,\gamma,\delta, q$, provided that
   the matrices and the vectors satisfy relations
 \begin{eqnarray}
  \mD\mE-q\mE\mD&=&\mD+\mE ,\label{q-comm-Derrida}\\
\langle W|(\alpha \mE-\gamma \mD)&=&\langle W| ,\label{W}\\
(\beta \mD-\delta \mE)|V\rangle&=&|V\rangle. \label{V}
\end{eqnarray}
\citet{derrida1993exact} give a detailed proof   for the case when $q=0$, and sketch the proof for the general $q$.
\citet[Section III]{sandow1994partially}  proves the invariance of the probability
measure determined by \eqref{MatrixSolution} for arbitrary $q$.

Formula \eqref{MatrixSolution} offers the flexibility of choosing convenient vectors and matrices.
Refs. \cite{derrida1993exact,derrida2003exact,enaud2004large,sasamoto1999one,uchiyama2004asymmetric} show such choices for various
 ranges of the parameters,
  and use the explicit representations to derive   properties of the ASEP.
Our goal is to derive a representation for the left hand side of \eqref{MatrixSolution} in terms of
moments of an auxiliary Markov process, called quadratic harness. We use
this representation to study large deviations principle for the average
occupation as well as  mean values of occupation sites. In particular, we
give simple derivations of some earlier known properties of ASEP.

Note that   representation \eqref{MatrixSolution}   cannot hold when $\<W|V\>=0$, and \citet[page 1384]{essler1996representations} point out that  this happens
when $\alpha\beta=\gamma\delta$.
 They also point out the importance of a more general requirement $\alpha\beta-q^k\gamma\delta\ne0$ for $k=0,1,\dots$.
 Our Markov process representation requires additional restrictions on the parameters, which in particular imply $\gamma\delta<\alpha\beta$.

\subsection{Quadratic harnesses}\label{AMP}
In this section we introduce an auxiliary Markov processes, called quadratic harness, defined on another probability space
which is unrelated to the probability space $(\{0,1\}^N,\pi_N)$ where the invariant measure $\pi_N$ is described through \eqref{MatrixSolution}.
To make the distinction more transparent,
we denote by $\EE(X)$ the expected value of a random variable $X$ on this probability space. Our %
Markov process is different than  the
 Markov process on
permutation tableaux which was associated with ASEP  in \cite{corteel2007markov}, and the way ASEP and the process we introduce are tied together is also different than the relation exhibited in \cite{corteel2007markov}.
 However, there is a common thread of the Askey-Wilson polynomials in the background.

A square integrable (real) stochastic process $\mathbf{X}=(X_t)_{t\in I}$ is called a  quadratic harness on an interval $I$ if for $ s<t<u$ we have
$$
\EE(X_t|\mathcal{F}_{s,u})=\tfrac{u-t}{u-s}X_s+\tfrac{t-s}{u-s}X_u,
$$
and
$$
\EE(X_t^2|\mathcal{F}_{s,u})=A_{tsu}X_s^2+B_{tsu}X_sX_t+C_{tsu}X_u^2+D_{tsu}X_s+E_{tsu}X_u+F_{tsu},
$$
where $(\mathcal{F}_{s,u})_{ s<u}$ is a past-future filtration of the process $\mathbf{X}$ and $A_{tsu},\ldots,F_{tsu}$
are non-random functions of $s,t,u$. We will say that  $\mathbf{X}=(X_t)_{t\in I}$
is a  {\em standard} quadratic harness if    $\EE\,X_t=0$, $\EE\,X_sX_t=s$, for $s<t$.
It appears (see  \cite{Bryc-Matysiak-Wesolowski-04}) that under mild technical
assumptions these conditions uniquely determine $\mathbf{X}$, which typically is a (non-homogeneous) Markov process.
Moreover, the functions $A_{tsu},\ldots,F_{tsu}$ are defined in terms of five numerical constants $\theta,\eta\in\mathbb{R}$,
 $\tau,\sigma\ge 0$, $%
 q_0\le 1+2\sqrt{\tau\sigma}$ and, consequently,
$$
\mathrm{Var}(X_t|\mathcal{F}_{s,u})=\tfrac{(u-t)(t-s)}{u(1+\sigma s)+\tau-\gamma s} K\left(\tfrac{X_u-X_s}{u-s}, \tfrac{\tfrac{1}{s}X_s-\tfrac{1}{u}X_u}{\tfrac{1}{s}-\tfrac{1}{u}} \right)
$$
with
$$K(x,y)=1+\theta x+ \eta y+ \tau x^2+ \sigma y^2-(1-q_0) xy.$$
That is, typically, a standard quadratic harness is determined by the constants $\theta,\eta,\tau,\sigma,q_0%
$,
and thus we write $\mathbf{X}\sim \mathrm{QH}(\theta,\eta,\tau,\sigma,%
q_0)$.
The family of standard quadratic harnesses includes affine transformations of such  processes as: Wiener process -
$\mathrm{QH}(0,0,0,0,1)$, Poisson process - $\mathrm{QH}(\theta,0,0,0,1)$,
 $\theta>0$ and contains the whole class of L\'evy-Meixner processes - $\mathrm{QH}(\theta,0,\tau,0,1)$ (see e.g. %
\cite{schoutens2000stochastic}). It contains also classical versions of free Brownian motion - $\mathrm{QH}(0,0,0,0,0)$ (see
\cite{biane98processes}), free Poisson process - $\mathrm{QH}(\theta,0,0,0,0)$ and the whole class of free L\'evy-Meixner processes - $\mathrm{QH}(\theta,0,\tau,0,0)$ (see, e.g.
\cite{Anshelevich2003}). Other examples of quadratic harnesses are quantum Bessel process - $\mathrm{QH}(\theta,\eta,0,0,1)$ (see  %
\cite{biane1996quelques},  %
\cite{Bryc-Wesolowski-05}, or \cite{Matysiak-Swieca-2015}), and a more general bi-Poisson process - $\mathrm{QH}(\theta,\eta,0,0,q)$ (see   %
\cite{Bryc-Matysiak-Wesolowski-04b}), or the $q$-Brownian motion - $\mathrm{QH}(0,0,0,0,q)$ (see, e.g. %
\cite{bozejko97qGaussian}).

 In
\citet{Bryc-Wesolowski-08} we use the orthogonality measures of Askey-Wilson polynomials to construct a large family of quadratic harnesses.  We recall it with some details now since this approach will be used in order to connect quadratic harnesses with the ASEPs.

Let $q\in(-1,1)$ and  $a,b,c,d\in \mathbb{C}$ be such that $abcd, abcdq, ab,abq\not\in[1,\infty)$. Following \cite[(1.20)]{Bryc-Wesolowski-08} we are interested in
 the family of Askey-Wilson polynomials %
defined through a three-term recurrence relation
\begin{equation}
  \label{EQ:bar-q}
  2x\bar{w}_n(x)=\bar{A}_n\bar{w}_{n+1}(x)+B_n\bar{w}_n(x)+\bar{C}_n\bar{w}_{n-1}(x),\qquad n\ge 0,
\end{equation}
with $\bar w_{-1}\equiv 0$, $\bar w_0\equiv 1$ and
$$\bar{A}_n=\tfrac{(1-abcdq^{n-1})(1-abq^n)}{(1-abcdq^{2n-1})(1-abcdq^{2n})},$$
\begin{multline*}
 B_n=a+\tfrac{1}{a}-\tfrac{(1-abcdq^{n-1})(1-abq^n)(1-acq^n)(1-adq^n)}{a(1-abcdq^{2n-1})(1-abcdq^{2n})} \\
 -a\tfrac{(1-q^n)(1-bcq^{n-1})(1-bdq^{n-1})(1-cdq^{n-1})}{(1-abcdq^{2n-2})(1-abcdq^{2n-1})},
\end{multline*}
$$ \bar{C}_n=\tfrac{(1-q^n)(1-acq^{n-1})(1-adq^{n-1})(1-bcq^{n-1})(1-bdq^{n-1})(1-cdq^{n-1})}{(1-abcdq^{2n-2})(1-abcdq^{2n-1})}.$$
(The bars are solely for consistency with notation in \cite{Bryc-Wesolowski-08}.) When   $\displaystyle\prod_{k=0}^n \bar A_k\bar C_{k+1}\geq 0$
for all $n$, the Askey-Wilson polynomials are orthogonal with respect to unique compactly supported
 probability measure $\nu(dx;a,b,c,d,q)$, which  is called the Askey-Wilson distribution. It is not easy to describe in general for what choices of parameters
$a,b,c,d,q$ such a measure exists.  In
\citet[Lemma 3.1]{Bryc-Wesolowski-08} sufficient conditions for existence were given in terms of two integers:
$m_1=\#(\{ab,ac,ad,bc,bd,cd\}\cap [1,\infty))$  and $m_2=\#(\{abq,acq,adq,bcq,bdq,cdq\}\cap [1,\infty))$.
For $a,b,c,d$ which are either real or come in complex conjugate pairs and such that $abcd<1$ and $abcdq<1$ the Askey-Wilson distribution exists in the following cases:
(i) $q\ge 0$ and $m_1=0$; (ii) $q<0$ and $m_1=m_2=0$; (iii) $q\ge 0$, $m_1=2$; (iv) $q<0$, $m_1=2$, $m_2=0$; (v) $q<0$, $m_1=0$, $m_2=2$.
In particular, in Section  \ref{Sect:semininf} we need to consider  $ad=1$ and $ac>1$, which falls under case (iii), with degenerated measure $\nu$   concentrated at the root of $\bar w_1(x)$.

The Askey-Wilson measure has the form
$$
\nu(dx;a,b,c,d,q)=f(x;a,b,c,d)I(|x|<1)dx+\sum_{y\in F(a,b,c,d,q)}\,p(y)\delta_y(dx),
$$
where the density
\begin{equation}
  \label{f-density}
  f(x;a,b,c,d,q)=\tfrac{(q,ab,ac,ad,bc,bd,cd)_{\infty}}{2\pi(abcd)_{\infty}\sqrt{1-x^2}}\,\left|\tfrac{\left(e^{2i\theta}\right)_{\infty}}
{\left(ae^{i\theta},be^{i\theta},ce^{i\theta},de^{i\theta}\right)_{\infty}}\right|^2,
\end{equation}
with $\cos\,\theta=x$, may be sub-probabilistic or  zero  for some values of the parameters. The set $F=F(a,b,c,d,q)$ is a finite or
empty set of atoms of $\nu(dx;a,b,c,d,q)$. If $\alpha\in\{a,b,c,d\}$ is such that $|\alpha|> 1$ then $\alpha$ is real and
generates atoms. E.g.,
 if $|a|> 1$ then it generates the atoms
\begin{equation}
  \label{aatoms}
  x_j=(aq^j+1/(aq^j))/2 \mbox{ for $j=0,1,\dots$ such that $|aq^j|\ge 1$}
\end{equation} and the corresponding masses are
\begin{multline*}
 p(x_0)=\tfrac{(a^{-2},bc,bd,cd)_{\infty}}{(b/a,c/a,d/a,abcd)_{\infty}},\; \\
 %Corrected typo - j not infinity
p(x_j)=p(x_0)\tfrac{(a^2,ab,ac,ad)_j\,(1-a^2q^{2j})}{(q,qa/b,qa/c,qa/d)_{j}(1-a^2)}\,\left(\tfrac{q}{abcd}\right)^j,\; j\ge 1.
\end{multline*}
In the formulas above, for complex $\alpha$ and $|q|<1$ we used  $q$-Pochhammer symbol
$(\alpha)_n:=(\alpha;q)_n=\prod_{j=0}^{n-1}\,(1-\alpha q^j)$, $1\le n\le \infty$, $(\alpha)_0=1$. Moreover,  $(a_1,\ldots,a_k)_n:=(a_1)_n\ldots(a_k)_n$, $0\le n\le \infty$.

We now use the Askey-Wilson measures $\nu(dx;a,b,c,d,q)$ to construct a Markov process which will depend on parameters
$A,B,C,D,q$. As previously, we take  $q\in(-1,1)$ and fix $A,B,C,D$ which are either real or $(A,B)$ or $(C,D)$ are complex
conjugate pairs with $ABCD<1$, $qABCD<1$ and such that
\begin{equation}
  \label{ABCD-oryginal}
  AC,AD,BC,BD,qAC,qAD,qBC,qBD\in\mathbb{C}\setminus[1,\infty).
\end{equation}
The Askey-Wilson stochastic process $\mathbf{Y}=(Y_t)_{t\in I}$, where
\begin{equation}
  \label{EQ:I}
  I=\left[\max\{0,CD,qCD\},\,\tfrac{1}{\max\{0,AB,qAB\}}\right),
\end{equation} is defined as the Markov process with marginal
distributions
\begin{equation}
  \label{Y-univariate}
  \pi_t(dx)=\nu(dx;A\sqrt{t},B\sqrt{t},C/\sqrt{t},D/\sqrt{t},q),\quad t\in I,
\end{equation} with compact support $U_t$, and the transition probabilities
\begin{multline}
\label{Y-transitions}
   P_{s,t}(x,dy)=\nu(dy;A\sqrt{t},B\sqrt{t},\sqrt{s/t}(x+\sqrt{x^2-1}),\sqrt{s/t}(x-\sqrt{x^2-1})), \\
   \quad  s<t,\;s,t\in I,\;x\in U_s.
\end{multline}
The marginal distribution $\pi_t(dx)$ may have atoms at points
\begin{multline}
\label{Yatoms}
 \frac12\left(B\sqrt{t}q^j +\frac{1}{B\sqrt{t}q^j}\right),\;\frac12\left(\frac{Dq^j}{\sqrt{t}} +\frac{\sqrt{t}}{Dq^j}\right),\;
\\
 \frac12\left(A\sqrt{t}q^j +\frac{1}{A\sqrt{t}q^j}\right),\;  \frac12\left(\frac{Cq^j}{\sqrt{t}} +\frac{\sqrt{t}}{Cq^j}\right)  \;.
\end{multline}
These formulas  are recalculated from \eqref{aatoms} and appear explicitly in \cite[(3.7) and (3.8)]{Bryc-Wesolowski-08}.
(Transition probabilities $P_{s,t}(x,dy)$  also may have atoms.)

To construct a standard quadratic harness from the Askey-Wilson process we introduced in \cite[(2.22)]{Bryc-Wesolowski-08}
 an intermediate (nonstandard) quadratic harness  $\mathbf{Z}$ on $I$, which will be important for our further considerations in this paper. The process $\mathbf{Z}=(Z_t)_{t\in I}$ is defined by \begin{equation}
   \label{JW-Z} Z_t=\tfrac{2\sqrt{t}}{\sqrt{1-q}}\,Y_t,  \mbox{$t\in I$}.
 \end{equation} It is a Markov process with the following properties:
 \begin{itemize}
   \item[(i)] $(Z_t)$ has a sequence of orthogonal martingale polynomials $$\langle\mathbf{r}_t(x)|=[r_0(x;t), r_1(x;t),\dots],$$
    where
\begin{equation}
  \label{w2r} r_n(x;t)=t^{n/2}\bar{w}_n\left(\tfrac{\sqrt{1-q}}{2\sqrt{t}}x;A\sqrt{t},B\sqrt{t},C/\sqrt{t},D/\sqrt{t},q\right), \mbox{
 $n=0,1,\dots$},
\end{equation}
    with  $\bar{w}_n(x)=\bar{w}_n(x;a,b,c,d,q)$ defined in \eqref{EQ:bar-q}. That is, $r_n(x;t)$ is a polynomial of degree $n$
    in variable $x$ such that:
    \begin{enumerate}
      \item $r_0(x;t)=1$;
      \item $\EE\left(r_n(Z_t;t)r_m(Z_t;t)\right)=0$ for $m\ne n$;
      \item $\EE(r_n(Z_t;t)|Z_s)=\EE(r_n(Z_t;t)|\sigma(Z_v:v\leq s))=r_n(Z_s;s)$ for $s\leq t$.
    \end{enumerate}
    \item[(ii)]  The Jacobi matrix of the orthogonal polynomials $\{r_n(\cdot;t)\}$ is a tridiagonal infinite matrix which
    depends linearly on parameter $t$. We will write it  as $t\mx+\my$, and we will write the so called ``three step recurrence" in vector notation as
    \begin{equation}
      \label{three-step} x \langle\mathbf{r}_t(x)| = \langle\mathbf{r}_t(x)|(t\mx+\my).
    \end{equation}
    This is recalculated from the recurrence \eqref{EQ:bar-q}, and appears in  \cite[page 1244]{Bryc-Wesolowski-08}.
    \item[(iii)]  The Jacobi matrix is determined uniquely from the $q$-commutation equation
    \begin{equation}
      \label{q-comm}
      \mx \my - q \my \mx =\mI
    \end{equation}
    and the initial conditions  %
    \begin{equation}
      \label{ini-cond}
      \mx=\left[\begin{matrix}
        \gamma_0 & \eps_1 & 0 &\dots \\
        \alpha_0 &  &  & \\
        0 &  & & \\
        \vdots & & & \\
      \end{matrix}\right], \quad \my=\left[\begin{matrix}
        \delta_0 & \varphi_1 & 0 &\dots \\
        \beta_0 &  &  & \\
        0 &  &  & \\
        \vdots & & & \\
      \end{matrix}\right]\,,
    \end{equation}
    where $\alpha_0,\beta_0,\gamma_0,\delta_0,\eps_1,\varphi_1$ depend on parameters $A,B,C,D,q$ through formulas in
    \cite[page 1243]{Bryc-Wesolowski-08}; these formulas are reproduced in the proof of Theorem \ref{T-rep} below, see e.g.
    \eqref{gamma0}.
 \end{itemize}
 \citet{Bryc-Wesolowski-08} show that  $(Z_t)_{t\in I}$ is a Markov process
 with harness property
 \begin{equation}\label{QH}
  \EE(Z_t|Z_s,Z_u)=\frac{u-t}{u-s}Z_s+\frac{t-s}{u-s}Z_u
 \end{equation}
 and quadratic conditional variances under a two-sided conditioning.
 However, to obtain a standard quadratic harness an adjustment is necessary in order to fulfill the expectations
 and covariances requirements while preserving linearity of conditional expectations and quadratic form of conditional variances. This adjustment uses an invertible M\"obius transformation
  $T(t)=\tfrac{t+CD}{1+ABt}$ and is of the form
 \begin{multline}
 \label{EQ:Z2X}
 X_t=\tfrac{\sqrt{1-q}(1-ABt)Z_{T(t)}-(A+B)t-(C+D)}{\sqrt{(1-q)(1-AC)(1-AD)(1-BC)(1-BD)}}\,\sqrt{1-qABCD},
 \\  t\in J:=T^{-1}(I),
 \end{multline}
 see  \cite[formula (2.28)]{Bryc-Wesolowski-08}. \comment{"coma" in formula, and added "period"}

 Ref. \cite[Theorem 1.1]{Bryc-Wesolowski-08}
 states that $\mathbf{X}=(X_t)_{t\in J}$ is a quadratic harness $\mathrm{QH}(\theta,\eta,\tau,\sigma,
 q_0)$ with parameters
\begin{equation}
  \label{eq:theta}
  \theta=-\tfrac{((C+D)(1+ABCD)-2CD(A+B))\sqrt{1-q}}{\sqrt{(1-AC)(1-AD)(1-BC)(1-BD)(1-qABCD)}},
\end{equation}
\begin{equation}\label{eq:eta}
\eta=-\tfrac{((A+B)(1+ABCD)-2AB(C+D))\sqrt{1-q}}{\sqrt{(1-AC)(1-AD)(1-BC)(1-BD)(1-qABCD)}},
\end{equation}
\begin{equation}
  \label{eq:gamma} \tau=\tfrac{CD(1-q)}{1-qABCD},\quad \sigma=\tfrac{AB(1-q)}{1-qABCD},\quad %
  q_0=\tfrac{q-ABCD}{1-qABCD}.
\end{equation}

Ref. \cite[Theorem 1.2]{Bryc-Wesolowski-08} assures that Markov process $(Z_t)_{t\geq 0}$ exists when conditions \eqref{ABCD-oryginal} hold.
In this paper we
will only need real-valued parameters $A\geq 0,-1<B\leq 0,C\geq 0,-1<D\leq 0$, and $0\leq q<1$. Specialized to this case, \eqref{ABCD-oryginal} reduces to the condition $AC<1$.

\section{Representing  a matrix solution by the auxiliary Markov process}\label{sect:AMP}
We now return  to the study of invariant measure of ASEP with parameters $\alpha>0$, $\beta>0$, $\gamma\geq 0$, $\delta\geq 0$, $0\leq q<1$.
Our goal is to associate an appropriate quadratic harness $(Z_t)$ with the ASEP. Denote
 \begin{equation}\label{kappa}
\kappa_{\pm}(u,v)=\frac{1-q-u+v\pm\sqrt{(1-q-u+v)^2+4u v}}{2u} \,.
\end{equation}

 Let $(Z_t)_{t\geq 0}$ be the Markov process from \eqref{JW-Z} in Section \ref{AMP} with parameters %
\begin{equation}
  \label{ABCD}
  A=\kappa_{+}(\beta,\delta), \; B= \kappa_{-}(\beta,\delta), \; C=\kappa_{+}(\alpha,\gamma),\;D=\kappa_{-}(\alpha,\gamma).
\end{equation}
Notice that $A,B,C,D$ are real, $ABCD=\gamma\delta/(\alpha\beta)$, and $A,C\geq 0$ while $-1<B,D\leq 0$.
Indeed, it is clear that $\kappa_-(u,v)\leq 0$ when $u>0$, $v\geq 0$. Inequality  $\kappa_-(u,v)>-1$ is equivalent to
$1-q+u+v>\sqrt{(1-q-u+v)^2+4uv}$.
Since $q<1$ and $u,v\geq0$, the left hand side is positive. Upon squaring both sides we see that $\kappa_-(u,v)>-1$ is equivalent to
$4u(1-q)>0$.

In particular, $BD<1$, so $(Z_t)_{t\in I}$ is well defined when $AC<1$ and from
\eqref{EQ:I} we get $I=(0,\infty)$.

Our main result is the following representation of the joint
 generating function of the invariant measure of the ASEP in terms
 of the stochastic process $(Z_t)$.

\begin{theorem}
  \label{T-rep}
Suppose that the parameters of ASEP are such that
  $AC<1$. Then for $0<t_1\leq t_2\leq \dots\leq t_N$ the joint generating function of the invariant measure of the ASEP  is
  \begin{equation}\label{QHsolution}
\left\langle \prod_{j=1}^N t_j^{\tau_j}\right\rangle_N=\frac{\EE\left(\prod_{j=1}^N(1+t_j+\sqrt{1-q}\,Z_{t_j})\right)}{\EE\left((2+\sqrt{1-q}\,Z_1)^N\right)}.
  \end{equation}
\end{theorem}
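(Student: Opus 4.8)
The plan is to connect the matrix formula \eqref{MatrixSolution} with the Markov process $(Z_t)$ via the orthogonal martingale polynomials of $(Z_t)$, by choosing the matrices $\mD,\mE$ and the vectors $\langle W|$, $|V\rangle$ in \eqref{MatrixSolution} to be built from the Jacobi matrix $t\mx+\my$ of the polynomials $\{r_n(\cdot;t)\}$. Concretely, I would first reproduce from \cite[page 1243]{Bryc-Wesolowski-08} the explicit entries $\alpha_0,\beta_0,\gamma_0,\delta_0,\eps_1,\varphi_1$ of $\mx,\my$ in \eqref{ini-cond} in terms of the parameters $A,B,C,D,q$, and hence (using \eqref{ABCD}) in terms of $\alpha,\beta,\gamma,\delta,q$. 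The key algebraic step is to verify that the substitution
$$
\mD=\frac{1}{1-q}(\mI+\mx+\my),\qquad \mE=\frac{1}{1-q}(\mI-\sqrt{1-q}\,?\,),
$$
— more precisely a pair $\mD,\mE$ linear in $\mx,\my$ — together with the row vector $\langle W|=\langle e_0|$ and column vector $|V\rangle$ proportional to the vector of ``values'' of the martingale polynomials, satisfies the DEHP relations \eqref{q-comm-Derrida}, \eqref{W}, \eqref{V}. The $q$-commutation \eqref{q-comm-Derrida} should follow directly from $\mx\my-q\my\mx=\mI$ in \eqref{q-comm} after expanding $\mD\mE-q\mE\mD$; the boundary relations \eqref{W} and \eqref{V} should reduce, via the initial-condition block structure in \eqref{ini-cond}, to the defining quadratic equations $\kappa_\pm$ in \eqref{kappa} for $A,B$ (encoding $\beta,\delta$) and $C,D$ (encoding $\alpha,\gamma$). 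This is where the precise form of $\kappa_\pm$ and the relation $ABCD=\gamma\delta/(\alpha\beta)$ will be used, and I expect this verification to be the main obstacle: it is a somewhat delicate matching of two parametrizations, and getting the normalizations of $\langle W|$ and $|V\rangle$ right so that $\langle W|V\rangle\neq 0$ (which needs $\gamma\delta<\alpha\beta$, consistent with the earlier remark) is the crux.

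Granting such a representation of $\mD,\mE,\langle W|,|V\rangle$, the next step is purely a translation to probability. Using property (i) of $(Z_t)$, the orthogonal martingale polynomials $r_n(x;t)$ satisfy the three-step recurrence $x\langle\mathbf{r}_t(x)|=\langle\mathbf{r}_t(x)|(t\mx+\my)$ from \eqref{three-step}, and the martingale property $\EE(r_n(Z_t;t)\mid Z_s)=r_n(Z_s;s)$ together with $r_0\equiv 1$ gives $\EE(\langle\mathbf{r}_t(Z_t)|\,|V\rangle\mid \calF_s)$-type identities. The standard ``transfer-matrix = conditional expectation'' mechanism then yields, for $t_1\le\dots\le t_N$,
$$
\langle W|(\mE+t_1\mD)\cdots(\mE+t_N\mD)|V\rangle
=\EE\Big(\prod_{j=1}^N g(t_j,Z_{t_j})\Big)\cdot\text{(const)},
$$
where $g(t,x)$ is the scalar obtained by writing $\mE+t\mD$ in terms of $t\mx+\my$ and applying \eqref{three-step}: one should get $\mE+t\mD=\frac{1}{1-q}\big((1+t)\mI+(t\mx+\my)\big)$ up to the chosen normalization, so that acting on $\langle\mathbf{r}_t(Z_t)|$ replaces the matrix by multiplication by $1+t+\sqrt{1-q}\,Z_t$ after the rescaling $Z_t=\frac{2\sqrt t}{\sqrt{1-q}}Y_t$ in \eqref{JW-Z}. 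Thus each factor $\mE+t_j\mD$ contributes $\frac{1}{1-q}(1+t_j+\sqrt{1-q}\,Z_{t_j})$, using the ordering $t_1\le\cdots\le t_N$ so that successive conditional expectations collapse telescopically.

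Finally, I would compute the normalizing constant $K_N$ in \eqref{KN} by the same argument with all $t_j=1$, giving $K_N=\text{(const)}\cdot\EE\big((2+\sqrt{1-q}\,Z_1)^N\big)$ with the same constant and the same $\langle W|V\rangle$ prefactor, so that all normalization factors (the powers of $1-q$, the $\langle W|V\rangle$, and any scaling of $|V\rangle$) cancel in the ratio \eqref{MatrixSolution}, yielding exactly \eqref{QHsolution}. I would also note that $AC<1$ is precisely the condition under which $(Z_t)_{t\in I}$ with $I=(0,\infty)$ is well defined (as observed right before the theorem), so the process on the right-hand side makes sense; and that the identity, once established for the matrix measure $\pi_N$ of \eqref{MatrixSolution}, transfers to the invariant measure of the ASEP because the two coincide by \citet{derrida1993exact} and \citet[Section III]{sandow1994partially}. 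The only genuinely computational parts are the entries of $\mx,\my$ and the boundary-relation check; everything after that is the martingale telescoping, which is routine given properties (i)–(iii) of $(Z_t)$.
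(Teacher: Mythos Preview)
Your proposal is correct and follows essentially the same route as the paper's proof. The only imprecisions are in the scalings: the right choice is $\mD=\tfrac{1}{1-q}\mI+\tfrac{1}{\sqrt{1-q}}\mx$ and $\mE=\tfrac{1}{1-q}\mI+\tfrac{1}{\sqrt{1-q}}\my$ (so that $\mE+t\mD=\tfrac{1}{1-q}\big((1+t)\mI+\sqrt{1-q}(t\mx+\my)\big)$, which is what produces the factor $1+t+\sqrt{1-q}\,Z_t$ via \eqref{three-step}), and both $\langle W|$ and $|V\rangle$ are simply $[1,0,0,\dots]$; with these, the boundary checks \eqref{W}--\eqref{V} reduce to the quadratics solved by $\kappa_\pm$ exactly as you anticipate, and the martingale telescoping you describe yields \eqref{QHsolution} after the common $(1-q)^{-N}$ cancels in the ratio.
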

Since the left hand side is a polynomial in $t_1,\dots,t_N$, it is clear that formula \eqref{QHsolution} determines
 $\langle \prod_{j=1}^N t_j^{\tau_j}\rangle_N$  uniquely for all $t_1,\dots,t_N$.

The proof of Theorem \ref{T-rep} appears  in Section \ref{Sect:PTR}. We conclude this section with  remarks.
 \begin{remark}\label{Rem:interpret}
Assumption $AC<1$  has natural physical interpretation -- in terminology of \cite{derrida2003exact}
 it identifies the ``fan region" of ASEP.  Since $0\leq BD<1$, we have
$$\frac{\gamma\delta}{\alpha\beta}=ABCD<1,$$
so the conditions \cite[(58)]{essler1996representations} for the existence of a matrix representation are automatically
satisfied.
\end{remark}

\begin{remark}
Suppose that $(Z_t)$ is a Markov process from Section \ref{AMP} with $0\leq q<1$ and with parameters $A,B,C,D$ which
  satisfy \eqref{ABCD-oryginal}.  If in addition \ $A,B,C,D$ are real, $q\geq 0$,
$(1+A)(1+B)>0$, $(1+C)(1+D)>0$,  and $AB\leq 0$,  $CD\leq 0$,  then the right hand side of \eqref{QHsolution} determines the invariant measure of an ASEP with
parameters
\begin{multline}
  \label{Jacek}
 \alpha=\frac{1-q}{(1+C)(1+D)}, \;   \beta=\frac{1-q}{(1+A)(1+B)},\; \gamma=\frac{-(1-q) CD}{(1+C)(1+D)},\\ \delta=\frac{-(1-q)AB}{(1+A)(1+B)}.
\end{multline}

\end{remark}

\begin{remark}
A special case of formula \eqref{QHsolution} relates $\<t^{\sum_{j=1}^n\tau_j}\>_N$ to the $N$-th moment of  $(1+t+\sqrt{1-q}Z_t)$, i.e., to the moments of an Askey-Wilson law.
Therefore  exact formula for   $\EE\left((2+\sqrt{1-q}Z_1)^N\right)/(1-q)^N$ when $\gamma=\delta=0$ can be read from \cite[Equation (57)]{blythe2000exact}.
 \citet[Theorem 1.3.1]{josuat2011combinatorics} and \cite[Theorem 6.1]{josuat2011rook} used combinatorial techniques to generalize  this to the exact formula for
\comment{added missing 1}  $\EE\left((1+t+\sqrt{1-q}Z_t)^N\right)/(1-q)^N$.
  For a more analytic approach, see  \cite{szablowski2015moments}.
Formulas for moments of  more general Askey-Wilson laws appear in
\cite{corteel2012formulae,kim2014moments}.

\end{remark}

\begin{remark}
The right hand side of \eqref{QHsolution} may define a probability generating function of a probability measure for other values
  of parameters than those specified in the assumptions of Theorem \ref{T-rep}.
  In Theorem \ref{T-semiinfinite} we show
  that a semi-infinite ASEP is related to \eqref{QHsolution} with parameters that do not correspond to an ASEP.
  \citet{johnston2012pasep} consider another ``non-physical" case $q=-1$.%
\end{remark}

\subsection{Proof of Theorem \ref{T-rep}}\label{Sect:PTR} %
Our motivation and starting point is Ref. \cite{uchiyama2004asymmetric} which represents matrices $\mE,\mD$
by the Jacobi matrices of the Askey-Wilson polynomials, with vectors $\langle W|=[1,0,0,\dots]$ and $|V\rangle=[1,0,0,\dots]^T$.
Ref. \cite{uchiyama2004asymmetric} then uses these matrices to give  the integral formulae for the partition function and for the $m$-point correlation function
$\langle \tau_{j_1}\tau_{j_2}\dots\tau_{j_m}\rangle$.
These integral formulas inspired our search for representation in terms of the quadratic harnesses.

We begin with a slight variation of \cite[formula (4.5)]{uchiyama2004asymmetric}. We use matrices $\mx$, $\my$ from
\eqref{three-step}   to rewrite $\mE,\mD$
 as
\begin{equation}
  \label{J2C}
\mE= \frac{1}{1-q}\mI+\frac{1}{\sqrt{1-q}}\,\my, \quad \mD=\frac{1}{1-q}\mI+\frac{1}{\sqrt{1-q}}\,\mx\,,
\end{equation}
compare \cite[formulas (13) and (14)]{blythe2000exact}. (This is simply a rescaling by the factor $1/\sqrt{1-q}$ of the two
operators in \cite[formula (4.5)]{uchiyama2004asymmetric}.) %
Then, just like in
\cite{uchiyama2004asymmetric}, formulas \eqref{q-comm} and \eqref{q-comm-Derrida} are equivalent.

Next, we use \eqref{ini-cond}  to ensure that \eqref{V} and \eqref{W} hold with  vectors $\langle W|=[1,0,0,\dots]$ and
$|V\rangle=[1,0,0,\dots]^T$. In order for \eqref{V} to hold,  in  \eqref{ini-cond} we must have
\begin{equation}
  \label{corner10}
  \beta\alpha_0-\delta\beta_0=0
\end{equation} and
\begin{equation}
  \label{corner1}
  \frac{\beta-\delta}{1-q}+\frac{\beta\gamma_0-\delta \delta_0}{\sqrt{1-q}}=1.
\end{equation}
Since according to  \cite[page 1243]{Bryc-Wesolowski-08}, $\alpha_0=-AB \beta_0$,  equation \eqref{corner10} gives
$AB=-\delta/\beta$. The expressions for $\gamma_n$ and $\delta_n$  presented in  \cite[page 1243]{Bryc-Wesolowski-08}  for
$n=0$   give
\begin{equation}
  \label{gamma0}
  \gamma_0=\frac{A+B-AB(C+D)}{\sqrt{1-q}(1-ABCD)}, \;
  \delta_0=\frac{C+D-CD(A+B)}{\sqrt{1-q}(1-ABCD)}.
\end{equation}
We note that condition $ABCD<1$ implies that $\alpha\beta-\gamma\delta\ne 0$, so inserting these expressions into \eqref{corner1},    we get
$(1+C+D)\alpha=1-q+\gamma$.

Similarly, in order for \eqref{W} to hold, we must have $\alpha\varphi_1-\gamma\eps_1=0$,
 which  in view of relation $\varphi_1=-CD\eps_1$  \cite[page 1243]{Bryc-Wesolowski-08}
gives $CD=-\gamma/\alpha$. Finally, we need to ensure that
$$
\frac{\alpha-\gamma}{1-q}+\frac{\alpha \delta_0-\gamma\gamma_0}{\sqrt{1-q}}=1.
$$
Using \eqref{gamma0} and $\alpha\beta\ne\gamma\delta$, this simplifies to $(1+A+B)\beta=1-q+\delta$. The system of four equations for $A,B,C,D$ decouples,
 and we get a  pair of one-variable quadratic equations for the unknowns $A,C$,  with solutions \eqref{ABCD}.

We now choose $t_1\leq t_2\leq\dots\leq t_N$ and, after including the factor $ (1-q)^N$ into the normalizing constant, we  rewrite
 the unnormalized matrix expression on the
right hand side of \eqref{MatrixSolution} as
\begin{multline}
  \label{QH-step1}
\langle W| \mathop{\stackrel{\to}{\prod}}_{j=1}^N(\mE+t_j\mD)|V\rangle
\\
=
 \frac{1}{(1-q)^N}\langle W|  \mathop{\stackrel{\to}{\prod}}_{j=1}^N((1+t_j)\mI+\sqrt{1-q}(t_j\mx+\my))|V\rangle:= \frac{1}{(1-q)^N}\Pi.
\end{multline}
Since polynomials $\{r_n(x;t)\}$ are orthogonal and $r_0(x;t)=1$, we have $\langle W|=\EE(\langle \mathbf{r}_{t_1}(Z_{t_1})|)$ which gives
$$
\Pi
 = \EE\left(\Big\langle \mathbf{r}_{t_1}(Z_{t_1})\Big| \mathop{\stackrel{\to}{\prod}}_{j=1}^N((1+t_j)\mI+\sqrt{1-q}(t_j\mx+\my))\big|V\Big\rangle\right).
$$
By \eqref{three-step},
$$\big\langle \mathbf{r}_{t}(Z_{t})\big|  \left((1+t)\mI+\sqrt{1-q}(t\mx+\my)\right)= (1+t+\sqrt{1-q}Z_{t}) \big\langle \mathbf{r}_{t}(Z_{t})\big|,$$
so
\begin{multline*}
\Pi
 = \EE\Big(\left(1+t_1+\sqrt{1-q}Z_{t_1}\right) \\ \times \big\langle \mathbf{r}_{t_1}(Z_{t_1})\big|  \mathop{\stackrel{\to}{\prod}}_{j=2}^N((1+t_j)\mI+\sqrt{1-q}(t_j\mx+\my))\big|V\big\rangle\Big).
\end{multline*}
We now use this formula,   martingale property of $\{r_n(Z_t;t)\}$ and
 standard properties of  conditional expectations. We get
\begin{multline*}
\Pi=\EE\Big((1+t_1+\sqrt{1-q}Z_{t_1}) \\
\times\EE(\langle \mathbf{r}_{t_2}(Z_{t_2})|  \mathop{\stackrel{\to}{\prod}}_{j=2}^N((1+t_j)\mI+\sqrt{1-q}(t_j\mx+\my))|V\rangle|Z_{t_1})\Big)
\\
=\EE\Big((1+t_1+\sqrt{1-q}Z_{t_1}) \\ \times \langle \mathbf{r}_{t_2}(Z_{t_2})|  \mathop{\stackrel{\to}{\prod}}_{j=2}^N((1+t_j)\mI+\sqrt{1-q}(t_j\mx+\my))|V\rangle \Big)
\\
=\EE\Big(  \prod_{i=1}^2(1+t_i+\sqrt{1-q}Z_{t_i}) \langle \mathbf{r}_{t_2}(Z_{t_2})|  \mathop{\stackrel{\to}{\prod}}_{j=3}^N((1+t_j)\mI+\sqrt{1-q}(t_j\mx+\my))|V\rangle\Big)
\\
=\EE\Big(  \prod_{i=1}^2(1+t_i+\sqrt{1-q}Z_{t_i})
 \\ \times  \EE\Big(\big\langle \mathbf{r}_{t_3}(Z_{t_3})\big|   \mathop{\stackrel{\to}{\prod}}_{j=3}^N((1+t_j)\mI+\sqrt{1-q}(t_j\mx+\my))\big|V\big\rangle\Big|Z_{t_1},Z_{t_2}\Big)\Big).
\end{multline*}
Recurrently, %
we get
\begin{equation}\label{Pi}
\Pi =
\EE\left(\prod_{i=1}^N(1+t_i+\sqrt{1-q}Z_{t_i})  \langle \mathbf{r}_{t_N}(Z_{t_N})|V\rangle\right).
\end{equation}
Recall now that $r_0(x;t)=1$ and $|V\rangle=[1,0,0,\dots]^T$, so $\langle \mathbf{r}_{t_N}(Z_{t_N})|V\rangle=r_0(Z_{t_N};t_N)=1$. Thus \eqref{QH-step1} and \eqref{Pi} give
$$\big\langle W\big|  \mathop{\stackrel{\to}{\prod}}_{j=1}^N(\mE+t_j\mD)\big|V\big\rangle= \frac{1}{(1-q)^N}\EE\left(\prod_{j=1}^N \left(1+t_j+\sqrt{1-q}Z_{t_j}\right)\right).$$
By \eqref{MatrixSolution}, this proves \eqref{QHsolution}.

\begin{remark}
  \label{Rem:K} The normalizing constant $K_N$ from  \eqref{KN} enters many exact formulas in the literature.  Of course, the (new) normalizing constant
 is just the value of the expression $\EE\left(\prod_{j=1}^N \left(1+t_j+\sqrt{1-q}Z_{t_j}\right)\right)$
at $t_j=1$. Our proof used matrix representation \eqref{MatrixSolution} with $\<W|V\>=1$; by homogeneity, in general  the new normalization is related
to $K_N$ by formula
  \begin{equation}\label{recalc-K2U}
  \EE\left((2+\sqrt{1-q}\,Z_1)^N\right)= \frac{(1-q)^N}{\langle W|V\rangle} K_N.
  \end{equation}

\end{remark}

\section{Application to large deviations}\label{Sect:LD}
This section shows how one can use   representation \eqref{QHsolution} to prove the large deviations principle  for $\tfrac1N\sum_{j=1}^N\tau_j$.

For $q=0$, $\alpha,\beta>1$ and $\gamma=\delta=0$ this result was announced in \cite[(30)]{derrida2001free}. LDP for general
$q$ appears in \cite[Section 3.5]{derrida2003exact}, as an application to a more general path-level LDP. The  general case with
$\gamma,\delta\geq 0$ is described in the introduction to \cite{enaud2004large}, but it seems that no proof is available in the literature: in \cite{derrida2003exact} the authors write only: {\em the calculations in the case of nonzero $\gamma$ or $\delta$ are
more complicated, and for that reason we limit our analysis in the present
paper to the case $\gamma=\delta=0$}. Our proof is a good illustration of the use of %
Theorem \ref{T-rep} to get rigorous proofs
when the parameters of the ASEP are within its range of applicability.

For an ASEP with parameters $\alpha,\beta,\gamma,\delta,q$, define $A,B,C,D$ by \eqref{ABCD} and let
%Yizao : sign is incorrect ...
\begin{eqnarray}
  \label{rho}
  \rho_0&=&\frac{1}{1+C}=%
  \frac{1-q+\alpha+\gamma-\sqrt{(1-q-\alpha+\gamma)^2+4\alpha\gamma}}{2(1-q)} ,\label{rho-a}\\
   \rho_1&=&\frac{A}{1+A}=%
    \frac{1-q-\beta-\delta+
   \sqrt{(1-q-\beta+\delta)^2+4\beta\delta}}{2(1-q)}. \label{rho-b}
\end{eqnarray}
(These are \cite[(1.1a) and (1.1b)]{enaud2004large} and
%Yizao -reference is incorrect!
 \cite[formula (100)]{essler1996representations}.) Assumption $AC<1$
 is equivalent to $\rho_0>\rho_1$, and this is the only case that we analyze here.

 Next, we define
\begin{multline}
  \label{L0}
  \LL_0=- \sup_{\rho_1\leq \rho\leq \rho_0} \log(\rho(1-\rho))  \\=\begin{cases}
    - \log(\rho_0(1-\rho_0))= 2\log (\sqrt{C}+\frac{1}{\sqrt{C}}) & \mbox{if $C>1$, i.e., $\rho_0<1/2$},\\
     - \log(\rho_1(1-\rho_1))= 2\log (\sqrt{A}+\frac{1}{\sqrt{A}}) & \mbox{if $A>1$, i.e., $\rho_1>1/2$},\\
     \log 4 &\mbox{if $A,C<1$, i.e., $\tfrac12\in[\rho_1,\rho_0]$}. \\
   \end{cases}
\end{multline}
compare \cite[expression (1.5)]{derrida2003exact} or \cite[expression (1.7)]{enaud2004large}. The rate function depends on the
relative entropy of Bernoulli distributions:
\begin{equation}
  \label{h}
  h(x|p)=x \log \tfrac{x}{p}+(1-x)\log\tfrac{1-x}{1-p} \mbox{ for $x\in[0,1], p\in(0,1)$}.
\end{equation}
\begin{theorem} \label{T-LDP} Suppose that the parameters of ASEP are such that $\rho_0>\rho_1$.
  Then the sequence $\{\bar\tau_N:=\frac1N\sum_{j=1}^N\tau_j\}_{N\in\NN}$ satisfies the large deviations principle with speed $N$ and the rate function
  \begin{equation}
    \label{Ir}
    \I(x)=\begin{cases}
     h(x|\rho_0) +\LL_0+\log(\rho_0(1-\rho_0))& \mbox{if $0\leq x<1-\rho_0$},\\
     2 h(x|\tfrac12)+\LL_0-\log 4 &\mbox{if $1-\rho_0\leq x\leq 1-\rho_1$} ,\\
       h(x|\rho_1) +\LL_0+\log(\rho_1(1-\rho_1))& \mbox{if $1-\rho_1<x\leq 1$},\\
      \infty &\mbox{ if $x< 0$ or $x> 1$}.
    \end{cases}
  \end{equation}
  That is, denoting by $\pi_N$ the invariant measure on $\{0,1\}^N$ with  generating function \eqref{MatrixSolution}, we have
  \begin{equation}
    \label{LDP1}
    \lim_{N\to\infty} \frac1N\log \pi_N( \bar\tau_N\in(a,b))=-\inf_{x\in(a,b)}\I(x)
  \end{equation}

  for any $0\leq a<b\leq 1$.
\end{theorem}
Recall that $\pi_N( \bar\tau_N\in(a,b))$ is an abbreviation for $\pi_N(\{\tau\in \{0,1\}^N:\,\bar{\tau}_N\in(a,b)\})$.
We  remark
that since the rate function $\I(\cdot)$ is continuous on $[0,1]$, the upper/lower bounds of the ``standard statement''  of the
large deviations principle coincide on the subintervals of $[0,1]$.

  Theorem \ref{T-LDP} formally extends \cite[Section 3.5, formula (3.12)]{derrida2003exact}  to allow (some)  $\gamma,\delta$. It also includes
the LDP announced in \cite[formula (30)]{derrida2001free}, who considered the case $\alpha,\beta>1$ and $\gamma=\delta=q=0$.
Indeed, if $\gamma=\delta=0$ and $\alpha>1-q$, $\beta>1-q$ then  \eqref{kappa} and \eqref{ABCD}
give $A=C=0$. So
 $\rho_0=1$, $\rho_1=0$ and the rate function is $I(x)=2h(x|1/2)$ for $0\leq x\leq 1$.

In particular, Theorem \ref{T-LDP} recovers   part of the ``phase diagram''  \cite{derrida1992exact,derrida1993exact,enaud2004large,schutz1993phase}:
from \eqref{Ir} and \eqref{L0} it is easy to locate the unique zero of $\I(x)$, so that $\bar\tau_N$ converges exponentially fast to
either $\rho_0$, if $\rho_0<1/2$ (the so called {\em low density phase}),
 to $\rho_1$, if $\rho_1>1/2$ (the so called {\em high density phase}), or to $1/2$, if $1/2\in [\rho_1,\rho_0]$
 (the so called {\em maximal current phase}).

\begin{proof}
The proof is based on a routine application of the  Laplace transform method from the G\"artner-Ellis Theorem  \cite[Theorem
2.3.6]{DZ2009}. That is, we   compute the limit
\begin{equation}
  \label{LL}
  \Lambda(\la)=\lim_{N\to\infty}\frac1N\log \langle \exp(N \la \bar \tau_N)\rangle_N=\lim_{N\to\infty}\frac1N\log
  \left\langle \exp( \la \sum_{j=1}^N \tau_j)\right\rangle_N
\end{equation}
for real $\la$. In view of \eqref{QHsolution}, this amounts to calculation of the limit
\begin{equation}
  \label{LL0}
  \LL(\lambda)=\lim_{N\to\infty}\frac1N \log \EE\left((1+e^\la +\sqrt{1-q} Z_{e^\la})^N \right),
\end{equation}
with $\Lambda(\la)=\LL(\lambda)-\LL(0)$.

Our goal is to show that $1+e^\la +\sqrt{1-q} Z_{e^\la}\geq 0$, so
\begin{equation}
  \label{LL-supp}
  \LL(\lambda)=\log \|1+e^\la +\sqrt{1-q}
Z_{e^\la}\|_\infty
\end{equation} depends only on the support of the single random variable  $Z_t$, and is given by  explicit formula
\eqref{L0-ans} below.

 We determine the support of $\sqrt{1-q}Z_t$  from   the
support of a closely related random variable $Y_t$ in \cite[(2.22)]{Bryc-Wesolowski-08}. From \cite[Section
3.2]{Bryc-Wesolowski-08}, we read out that the distribution of $Z_t$ has only absolutely continuous and discrete parts. The
absolutely continuous part of the law of $\sqrt{1-q}Z_t$ is supported on the interval $[-2\sqrt{t},2\sqrt{t}]$, because the
continuous part of $Y_t$ is supported on $[-1,1]$. The locations of atoms of $\sqrt{1-q}Z_t$ are calculated from
\eqref{Yatoms} and \eqref{aatoms}. Recalling that according to \eqref{ABCD}, we have $A,C\geq 0$ and $B,D\leq 0$, the atoms
appear above (i.e., to the right of) the support $[-2\sqrt{t},2\sqrt{t}]$ of the continuous part at points
$$At q^j +\frac{1}{A q^j}  \mbox{ for $t>\frac{1}{A^2 q^{2j}}$}, \; C q^j+\frac{t}{C q^j} \mbox{ for $t<C^2 q^{2j}$}, $$
and they  appear below (i.e., to the left of) the support $[-2\sqrt{t},2\sqrt{t}]$ of the continuous part  at points
$$ Bt q^j +\frac{1}{B q^j} \mbox{ for $t>\frac{1}{B^2 q^{2j}}$}, \; D q^j+\frac{t}{D q^j} \mbox{ for $t<D^2 q^{2j}$}. $$

The atoms above the support of the continuous part lie below the pair of the    atoms  that correspond to
$j=0$, and   there are no atoms above the support of the continuous part when $C^2<t<1/A^2$ (recall that this interval is
nonempty by assumption \eqref{ABCD}). Similarly, the atoms below the support of the continuous part lie above the pair of the
atoms   that correspond to $j=0$. So the support of $\sqrt{1-q}Z_t$ is contained between the pair of continuous curves. The
upper boundary is  \begin{equation}
  \label{u(t)}
  U(t)=\begin{cases}
   C+t/C &\mbox{if  $t\leq C^2$}, \\
   2\sqrt{t} & \mbox {if $C^2<t<1/A^2$},\\
   At+ 1/A & \mbox{if $t\geq 1/A^2$},
\end{cases}
\end{equation}
and the lower boundary is
\begin{equation}\label{ell(t)}
\ell(t)=\begin{cases}
   D+t/D & \mbox{ if $t\leq D^2$}, \\
   -2\sqrt{t} & \mbox{ if $D^2<t<1/B^2$},\\
   Bt+ 1/B & \mbox{ if $t\geq 1/B^2$}\,.
\end{cases}
\end{equation}	
The %
boundaries do not depend on the value of parameter $0\leq q<1$.    Fig. \ref{Fig2} illustrates how the support of $Z_t$ varies
with $t$ for $q=0$.

\begin{figure}[H]
\includegraphics[width=4in]{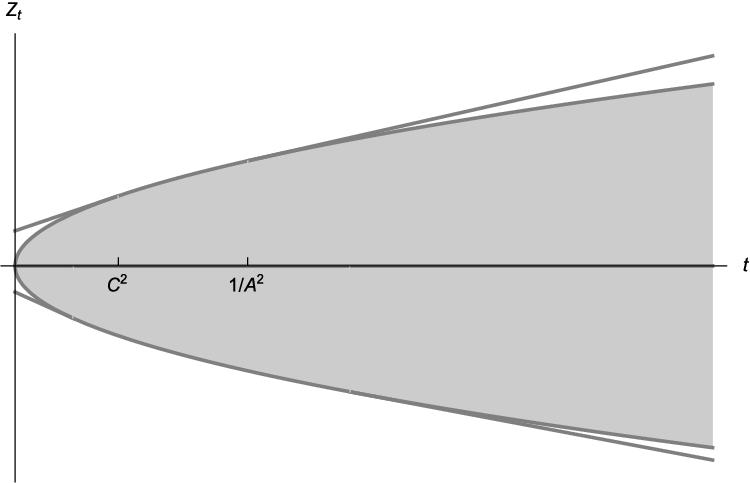}
\caption{Support of $Z_t$ for $q=0$, $A,C>0$, $B,D<0$. The shaded gray area between the parabolas represents the support
 $[-2\sqrt{t},2\sqrt{t}]$ of the continuous part of the distribution. The  four tangent
gray lines represent the locations of the atoms for   $t>0$.
\label{Fig2}
}
\end{figure}

To verify that $1+t+\sqrt{1-q}Z_t\geq 0$, we inspect  each of the cases  and verify that $1+t+\ell(t)\geq (1-\sqrt{t})^2\geq 0$:
\begin{enumerate}
  \item for $t\leq D^2$ with negative $D$, we have $1+t+D+t/D=(D+t)(1+1/D)\geq  (1-\sqrt{t})^2$ as $D\mapsto D+t/D$ with derivative $1-t/D^2\geq 0$ is increasing on $-\sqrt{t}<D<0$;

 \item for any $t$, $1+t-2\sqrt{t}=(1-\sqrt{t})^2\geq 0$;

\item  for $t\geq 1/B^2$  with negative $B$, we have $1+t+tB+1/B =(B+1)(t+1/B)\geq  (1-\sqrt{t})^2$, as $B\mapsto tB+1/B$ with derivative $t-1/B^2\geq 0$
is increasing on $-1/\sqrt{t}<B<0$.
\end{enumerate}

 Since we proved that $1+t+\sqrt{1-q}Z_t\geq 0$, we can now insert the absolute value and the limit \eqref{LL0}
becomes just the logarithm of the infinity norm   $\|1+e^\lambda+\sqrt{1-q}Z_{e^\lambda}\|_\infty$. The explicit value of the
limit can now be read out from \eqref{u(t)}. We get
\begin{equation}\label{L0-ans}
  \LL(\lambda)=\begin{cases}
\log (C+e^\la)+\log\frac{1+C}{C}  & \mbox{if  $e^\la<C^2$}, \\
2 \log(1+e^{\la/2})   &\mbox{ if $C^2<e^\la<1/A^2$} , \\
\log(1+A e^\la)+\log \frac{1+A}{A} & \mbox{ if $e^\la>1/A^2$}.
  \end{cases}
\end{equation}
Accounting for  the normalization which is needed in \eqref{LL}, we get $\Lambda(\la)=\LL(\la)-\LL(0)$. A calculation confirms
that $\LL_0:=\LL(0)$ is indeed given by \eqref{L0}.

Since the function $\la\mapsto \Lambda(\la)$ is  differentiable at all $\la$, the LDP follows with
$$
\I(x)=\sup_{\la}\{\la x-\Lambda(\la)\}=\LL_0+\sup_{\la}\{\la x-\LL(\la)\}.
$$
To derive the three explicit expressions given in \eqref{Ir}
we note that
$$\LL'(\la)=\begin{cases}
  \frac{e^\la}{C+e^\la} & \mbox{with $e^\la<C^2$ has range $(0,C/(1+C))$ },\\
  \frac{e^{\la/2}}{1+e^{\la/2}} & \mbox{with $C^2\leq e^\la\leq 1/A^2$ has range $[C/(1+C), 1/(1+A)]$ },\\
  \frac{Ae^\la}{1+Ae^\la} & \mbox{with $e^\la>1/A^2$ has range $(1/(1+A), 1)$}.
\end{cases}$$
After a calculation, this gives each of the cases listed in \eqref{Ir} for $x\in(0,1)$.
The value for $\I(x)$ with $x\leq 0 $  arises from $\la\to-\infty$ with $\LL(\la)\to \log(1+C)$.
The value for $\I(x)$ with  $x\geq 1$
arises from taking  $\la\to\infty$ with $\la x -\LL(\la)= (x-1)\la  +(\la -\LL(\la)) $ and noting that
 $\la -\LL(\la)=\log\frac{e^\la}{1+A e^\la}-\log\frac{1+A}{A}\to -\log(1+A)$ is bounded.
(The above is a routine entropy calculation that we included for completeness.)

\end{proof}

\section{Some explicit formulas} \label{Sect:miracle}
 Some ``miraculous'' explicit formulas for the average occupation $\langle \tau_k\rangle_N $ of site $k$ for a system of length $N$ appear in \cite[Formulas (47) and (48)]{derrida1992exact} in the case $q=0$, $\alpha=\beta=1$, $\gamma=\delta=0$.
The first of them is generalized to arbitrary $\alpha,\beta$ in \cite[expressions (39) and (43)]{derrida1993exact} and in
\cite{schutz1993phase}. In particular, %
\citet[formula (3.3)]{schutz1993phase}
point out a factorization of the
expression for the difference $ \<\tau_{k}\>_N-\<\tau_{k+1}\>_N$ and
 discuss several implications of the factorization.

 Our goal in this section  is to explore Theorem \ref{T-rep} to gain quick access to some of such explicit formulas.

\subsection{Integral formulas for $K_N$}
Of course, \eqref{recalc-K2U} is an integral formula for $K_N$, expressing $K_N$
 as a constant times the integral of $(2+\sqrt{1-q}x)^N$ with respect to the law of $Z_1$.
  This is essentially   \cite[expression (6.1)]{uchiyama2004asymmetric} or \cite[expression (3.12)]{uchiyama2005correlation}.
 When $-1<A,B,C,D<1$,
 the law is absolutely continuous with the Askey-Wilson density \eqref{f-density}, which in general involves infinite products. For $q=0$, the infinite products vanish and the density takes a more  concise form. We get
\begin{equation}
  \label{Derrida+}
  K_N= M\int_{-2}^2 \frac{(2+x)^N \sqrt{4-x^2}}{(1+A^2-Ax)(1+B^2-Bx)(1+C^2-Cx)(1+D^2-Dx)}  dx\,,
\end{equation}
where
$$
M=\frac{(1-AB)(1-AC)(1-AD)(1-BC)(1-BD)(1-CD)}{2\pi(1-ABCD)}.$$
This formula simplifies further when  $\gamma=\delta=0$  and $\alpha,\beta>1/2$. From \eqref{ABCD} we get
$A=(1-\beta)/\beta$, $B=0$,  $C=(1-\alpha)/\alpha$, $D=0$; the values of $A,B$ need to be swapped when $\beta>1$
and similarly the values of $C,D$ need to be swapped when $\alpha>1$. So denoting by  $a=(1-\alpha)/\alpha$, $b=(1-\beta)/\beta$ the two
 non-zero values among $A,B,C,D$, expression
 \eqref{Derrida+} becomes
 \begin{equation}
  \label{K-int}
  K_N=\frac{\alpha+\beta-1}{2\pi \alpha\beta}\int_{-2}^2 \frac{(2+x)^N \sqrt{4-x^2}}{(1+a^2-ax) (1+b^2-bx) }  dx.
\end{equation}
With $x=2\cos\theta$  this is \cite[expression (B10)]{derrida1993exact}.
  (Another explicit formula is \cite[formula (42)]{blythe2000exact} for general $q$; \eqref{K-int} is $q=0$ of that formula.)

\subsection{Density profile}\label{Sect:DP} In this section we derive some additional explicit integral formulas.

In order to be able to use some known (or not so known) results on quadratic harnesses, we take $\gamma=\delta=0$.
 In this case \cite[formula (2.28)]{Bryc-Wesolowski-08} simplifies, and we
  can replace Markov process $(Z_t)$ by a better studied quadratic harness $(X_t)\sim QH(\theta,\eta,0,0,q)$, which in this setting becomes a
  ``bi-Poisson'' process from \cite{Bryc-Matysiak-Wesolowski-04b} (see also \cite[Section 5.2]{Bryc-Wesolowski-08}). This is a Markov process with three parameters $\eta,\theta\in\RR$,
  and $-1<q<1$; for $q\geq 0$ the parameters must satisfy additional constraint $1+\eta\theta>q$
 (see also \cite[Section 5.2]{Bryc-Wesolowski-08}).
  The parameters $\eta,\theta$ that are used in  \cite{Bryc-Matysiak-Wesolowski-04b} can be recalculated from \eqref{ABCD},
 using   formulas   %
 (\ref{eq:theta}--\ref{eq:gamma}).
 Since $AB=0$ and $CD=0$, these formulas specialize in our case to $\tau=\sigma=0$, $\gamma=q$ and
  $$\eta=\frac{ -(A+B)   \sqrt{1-q}}{\sqrt{(1-A C) (1-B C) (1-A D) (1-B D)  }}\label{eta},\;
  $$
  $$
  \theta=\frac{ -(D+C)  \sqrt{1-q}}{\sqrt{(1-A C) (1-B C) (1-A D) (1-B D)  }}\label{theta}\,.$$
Depending on the
 signs of $1-\beta-q$ and $1-\alpha-q$ in \eqref{ABCD},   there are four different cases  which all give the same final answer
 \begin{equation}
   \label{eta-theta}
\eta=\frac{\beta +q - 1}{\beta\vartheta } ,
   \;
   \theta=\frac{\alpha+q-1}{\alpha \vartheta},
 \end{equation}
 where $\vartheta$ is given by
\begin{equation}
  \label{chi}
\vartheta=\frac{\sqrt{\alpha+\beta+q-1}}{\sqrt{\alpha\beta}}.
\end{equation}
 We note that condition  $AC<1$  implies that $\alpha+\beta>1-q$, so $\vartheta>0$ is well defined, and  the constraint $1+\eta\theta>q$ mentioned above is fulfilled.

 From \cite[(2.28)]{Bryc-Wesolowski-08} we recalculate the process that appears in \eqref{QHsolution} as follows
 \begin{equation}
   \label{X2Z}
     1+t+\sqrt{1-q}Z_t=(1-q)\left(\vartheta X_t+\frac{t}{\beta}+\frac{1}{\alpha}\right).
 \end{equation}
 Since this linear function of $X_t$ will appear several times, we  denote
 \begin{equation}
   \label{L(x)}
L_t(x)=\vartheta x+\frac{t}{\beta}+\frac{1}{\alpha},
 \end{equation}
  and we put  $L(x)=L_1(x)$.
 The relevant version of \eqref{QHsolution} is now
 \begin{equation}
   \label{QH-biP}
   \left\langle \prod_{j=1}^N t_j^{\tau_j}\right\rangle_N=\frac{\EE\left(\prod_{j=1}^NL_{t_j}(X_{t_j})\right)}{\EE\left(L^N(X_1)\right)},
 \end{equation}
 and the normalizing constant \eqref{KN} is $K_N=\EE\left(L^N(X_1)\right)$.

  Denote by $P_{s,t}(x,dy)$ the transition probabilities of the Markov process $(X_t)$ and by $\pi_t(dy)=P_{0,t}(0,dy)$
  the univariate laws for the process started at $X_0=0$. (These measures depend on the parameters   $\eta,\theta$ given by \eqref{eta-theta},
  have absolutely continuous component, and may also have atoms.)

  Formula \eqref{QH-biP} can be restated as the quotient of  multiple integrals with respect to these measures. That is,
  $$\EE\left(L^N(X_1)\right)=\int L^N(x)\pi_1(dx)$$
  and
\begin{multline*}
\EE\left(\prod_{j=1}^NL_{t_j}(X_{t_j})\right)
\\
=\iint{\footnotesize \dots}\int L_{t_1}(x_1){\footnotesize \dots} L_{t_N}(x_N)P_{t_{N-1},t_N}(x_{N-1},dx_N){\footnotesize \dots} P_{t_{1},t_2}(x_{1},dx_2) \pi_{t_1}(dx_1).
\end{multline*}

  We will need the following result.
  \begin{theorem}
    \label{Thm-gen-biPoisson} %
    On polynomials $f,$ the infinitesimal generator
    $$A_t(f)(x):=\lim_{u\downarrow t} \frac{1}{u-t}\EE(f(X_u)-f(x)|X_t=x)$$ of the Markov process $(X_t)\sim QH(\theta,\eta,0,0,q)$ is given by
    \begin{equation}
      \label{A}
      A_t(f)(x)=(1+\eta x)\int \frac{\partial}{\partial x}\left(\frac{f(y)-f(x)}{y-x}\right) P_{q^2 t, t}(q(x-t\eta)+\theta,dy).
    \end{equation}

  \end{theorem}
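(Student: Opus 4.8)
The plan is to convert \eqref{A} into an algebraic identity between polynomials by means of the orthogonal martingale polynomials of $(X_t)$, and then to verify that identity inductively through the three-term recurrence.

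Because $\gamma=\delta=0$ forces $AB=CD=0$, the process $(X_t)\sim\mathrm{QH}(\theta,\eta,0,0,q)$ is, by the simplified form of \eqref{X2Z}, the affine substitution $Z_t=a\,X_t+b(t)$ in which the coefficient $a=\sqrt{1-q}\,\sqrt{(\alpha+\beta+q-1)/(\alpha\beta)}$ of $X_t$ is \emph{independent of $t$} (all of the $t$-dependence sits in the additive term $b(t)$, since $T(t)=t$ when $AB=CD=0$). Hence $p_n(x;t):=r_n\bigl(a x+b(t);t\bigr)$, with $r_n(\cdot;t)$ the Askey--Wilson martingale polynomials \eqref{w2r} of $(Z_t)$, is a system of orthogonal martingale polynomials for $(X_t)$; moreover $AB=0$ makes the leading coefficient of $r_n(\cdot;t)$ constant in $t$, so the leading coefficient of $p_n(\cdot;t)$ in $x$ is constant in $t$ and $\partial_t p_n(\cdot;t)$ has degree $\le n-1$. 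Now fix $t$ and a polynomial $f$; expanding $f=\sum_k\hat f_k(u)\,p_k(\cdot;u)$ and using $\EE\bigl(p_k(X_u;u)\mid X_t=x\bigr)=p_k(x;t)$ gives $\EE\bigl(f(X_u)\mid X_t=x\bigr)=\sum_k\hat f_k(u)\,p_k(x;t)$; subtracting $f(x)=\sum_k\hat f_k(t)\,p_k(x;t)$, dividing by $u-t$ and letting $u\downarrow t$ shows that the limit in \eqref{A} exists and equals $\sum_k\hat f_k'(t)\,p_k(x;t)$, which upon differentiating the identity $f\equiv\sum_k\hat f_k(t)\,p_k(\cdot;t)$ in $t$ equals $-\sum_k\hat f_k(t)\,\partial_t p_k(x;t)$. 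Thus $A_t$ is the linear operator determined by $A_t\bigl(p_n(\cdot;t)\bigr)=-\partial_t p_n(\cdot;t)$. On the other side, for a polynomial $g$ the integral $\int g(y)\,P_{q^2t,t}(z,dy)$ is evaluated by expanding $g$ in the basis $\{p_k(\cdot;t)\}$ and invoking the martingale identity $\int p_k(y;t)\,P_{q^2t,t}(z,dy)=p_k(z;q^2t)$, which is legitimate because $q^2t\le t$. Applying this with $g(y)=\partial_x\!\bigl(\tfrac{p_n(y;t)-p_n(x;t)}{y-x}\bigr)$ and $z=q(x-t\eta)+\theta$, the assertion \eqref{A} reduces to the family of polynomial identities
$$-\partial_t p_n(x;t)=(1+\eta x)\int\partial_x\!\left(\frac{p_n(y;t)-p_n(x;t)}{y-x}\right)P_{q^2t,t}\bigl(q(x-t\eta)+\theta,\,dy\bigr),\qquad n\ge0.$$

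I would establish these identities by induction on $n$ using the three-term recurrence $x\,p_n(x;t)=p_{n+1}(x;t)+b_n(t)\,p_n(x;t)+c_n(t)\,p_{n-1}(x;t)$, whose coefficients are affine in $t$ and are computed from the Jacobi matrix $t\mx+\my$ of \eqref{three-step} and \eqref{q-comm} with initial conditions \eqref{ini-cond}, specialized via \eqref{eta-theta}. Differentiating the recurrence in $t$ gives the recursion obeyed by the left-hand side; on the right-hand side, rewriting $p_{n+1}$ through the recurrence produces---besides the corresponding recombination of the $n$- and $(n-1)$-terms---a correction term that, thanks precisely to the calibration ``time $q^2t$ plus argument shift $x\mapsto q(x-t\eta)+\theta$'', is reproduced under $\int(\cdot)\,P_{q^2t,t}(q(x-t\eta)+\theta,dy)$ and matches $-b_n'(t)$ and $-c_n'(t)$. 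The base cases are immediate: $A_t(1)=0$; $A_t(x)=0$ because $(X_t)$ is a martingale (equivalently $p_1(x;t)=x$ does not depend on $t$); and $A_t(x^2)=1+\eta x$ by a direct second-moment computation, which also makes the right-hand side trivial since $\partial_x\!\bigl(\tfrac{y^2-x^2}{y-x}\bigr)=1$ so it collapses to $(1+\eta x)\int 1\cdot P_{q^2t,t}(\cdot,dy)=1+\eta x$.

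The main obstacle is exactly this inductive matching: one must show that the $q$-shift in time together with the affine shift in the argument conspire---via the $q$-commutation $\mx\my-q\my\mx=\mI$ from \eqref{q-comm}---to reproduce precisely $-\bigl(b_n'(t)\,p_n+c_n'(t)\,p_{n-1}\bigr)$ at every step. More transparently, one can sidestep the induction by applying both $A_t$ and the right-hand operator of \eqref{A} to a generating function of the bi-Poisson martingale polynomials and checking a single closed-form functional identity; and as a fallback the whole statement can be obtained by expanding $\EE\bigl(f(X_u)-f(x)\mid X_t=x\bigr)$ directly from the explicit Al-Salam--Chihara--type transition density of the bi-Poisson process and collecting the term linear in $u-t$---a longer but entirely routine computation.
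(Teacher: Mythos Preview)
Your reduction to the polynomial identity $A_t\bigl(p_n(\cdot;t)\bigr)=-\partial_t p_n(\cdot;t)$ is correct and is exactly how the paper begins; the existence of the limit and the formula $A_t(M_n(\cdot;t))=-\partial_t M_n(\cdot;t)$ via martingale polynomials is the paper's first step as well.

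From that point on, however, the paper proceeds quite differently, and its route avoids precisely the obstacle you flag. Instead of attacking the integral with the $\partial_x$ directly, the paper introduces the auxiliary operator $H_t(p):=A_t(xp)-xA_t(p)$ and first proves the \emph{simpler} identity
\[
H_t(p)(x)=(1+\eta x)\int\frac{p(y)-p(x)}{y-x}\,P_{q^2t,t}\bigl(q(x-t\eta)+\theta,dy\bigr),
\]
then recovers $A_t$ from $H_t$ by citing \cite[Lemma~2.4]{Bryc-Wesolowski-2013-gener}. The crucial insight---absent from your proposal---is that $\nu_{x,t}(dy):=P_{q^2t,t}\bigl(q(x-t\eta)+\theta,dy\bigr)$ is the orthogonality measure of the polynomials $W_n(y;x,t)=Q_{n+1}(y;x,t,t)/(y-x)$, where $Q_k(\cdot;x,t,s)$ are the polynomials orthogonal with respect to the transition kernel $P_{s,t}(x,\cdot)$. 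Because of this, expanding $M_n(y;t)$ in the basis $\{Q_k(y;x,t,s)\}$ and integrating $\tfrac{M_n(y;t)-M_n(x;t)}{y-x}$ against $\nu_{x,t}$ kills all terms except the coefficient of $Q_1$, and that single surviving coefficient is computed in closed form from the recurrence---no induction is needed. Your inductive matching is not wrong in principle, but you leave the decisive step open, and the bookkeeping (the interplay between the $x$-multiplication in $H_t$, the $\partial_x$ in the integrand, and the double shift $t\mapsto q^2t$, $x\mapsto q(x-t\eta)+\theta$) is exactly what the paper's orthogonality trick circumvents. Of your fallbacks, the generating-function check would probably work but is still more laborious than recognizing the orthogonality of the $W_n$'s.
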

More generally, if $f(x,t)$ is a polynomial in two variables $x,t$, then
\begin{equation}\label{BetterA}
  \lim_{u\downarrow t} \frac{1}{u-t}\EE\big(f(X_u,u)-f(x,t)\big|X_t=x\big)=\frac{\partial}{\partial t}f(x,t)+A_t(f(\cdot, t))(x).
\end{equation}
(In order not to interrupt the flow of this section, we postpone  the proof  of Theorem \ref{Thm-gen-biPoisson} to Appendix \ref{S:PofThm}.)

  We use Theorem \ref{Thm-gen-biPoisson} to explain the origin of  some of the ``miraculous'' explicit formulas for the average occupancy
  $\langle\tau_k\rangle_N$. Recall notation \eqref{chi} and  \eqref{eta-theta}.

  \begin{theorem}\label{T-miracle}
    Suppose   $\alpha+\beta>1-q$. %
     Then for $1\leq j\leq N-1$ we have %
  \begin{multline}
  \label{tau-ans}
      \langle \tau_j\rangle_N-\langle \tau_{j+1}\rangle_N
      \\
      =
      \vartheta^2\frac{\int  \left(\int L^{N-j-1}(y)P_{q^2, 1}(\theta+q(x-\eta),dy) \right)L^{j-1}(x)(1+\eta x)\pi_1(dx)}
      {\int L^N(x) \pi_1(dx) }.
    \end{multline}
  \end{theorem}
\begin{proof}
We begin with $\langle \tau_j\rangle_N$ with $1\leq j<N$.
Take $s<t<u$ so that we can apply Theorem \ref{T-rep}.
Using notation \eqref{L(x)}, we
need to identify the coefficient at $t$ in the expression
$$\EE\left(L_s^{j-1}(X_s)L_t(X_t)L_u^{N-j}(X_u)\right) $$
and then take the limit $u\to s$, and put $s=1$.
Since $\EE(X_t|X_s,X_u)=\frac{u-t}{u-s}X_s+\frac{t-s}{u-s}X_u$, compare \eqref{QH}, we see that the coefficient at $t$ is
\begin{equation}
  \label{tau-j1}
\EE\left(L_s^{j-1}(X_s)\frac{L_u^{N-j+1}(X_u)-L_s(X_s)L_u^{N-j}(X_u)}{u-s}\right).
\end{equation}
(This expression can also be obtained  directly from \eqref{MatrixSolution}.)

We will use Theorem \ref{Thm-gen-biPoisson} to compute the limit of \eqref{tau-j1} as $u\to s$. To do so, we write the limit as
\begin{multline}
  \lim_{u\to s} \EE\left(L_s^{j-1}(X_s)\frac{L_u^{N-j+1}(X_u) -L_s^{N-j+1}(X_s)}{u-s}\right)\\
  -  \lim_{u\to s}\EE\left(L_s^{j}(X_s)\frac{L_u^{N-j}(X_u)-L_s^{N-j}(X_s)}{u-s}\right)\\
  =  \EE\left(L_s^{j-1}(X_s)\lim_{u\to s}\EE\big(\frac{L_u^{N-j+1}(X_u)-L_s^{N-j+1}(X_s)}{u-s}\big|X_s\big)\right)  \\
  -  \EE\left(L_s^{j}(X_s)\lim_{u\to s}\EE\big(\frac{L_u^{N-j}(X_u)-L_s^{N-j}(X_s)}{u-s}\big|X_s\big)\right).
\end{multline}
Here we can pass to the limit under the integral  because %
 $X_s,X_u$ are bounded random variables, and %
 both conditional expectations
  are in fact polynomials in $s,u, X_s$;
the factor $(u-s)$ in the denominator cancels out by algebra. Compare
\cite[Proof of Prop. 2.3]{Bryc-Wesolowski-2013-evo}.

Using \eqref{BetterA} and taking $s=1$ we get
\begin{multline}
\label{tauj2}
  K_N\langle \tau_j\rangle_N =\frac{1}{\beta}\EE \left(L^{N-1}(X_1)\right)
  \\
   +\EE\left(L^{j-1}(X_1)\left[A_1(L^{N-j+1})(x)-L(x)A_1(L^{N-j})(x)\right]_{x=X_1}\right).
\end{multline}
We note that the first term on the right hand side of \eqref{tauj2} is
\begin{equation}
  \label{tau_N}
   \frac{1}{\beta}\EE \left(L^{N-1}(X_1)\right)=K_N\<\tau_N\>_N.
\end{equation}
Indeed,  it is easy to check from \eqref{MatrixSolution} (or from \eqref{QHsolution}) that (somewhat more generally) \comment{Uncommented next formula!!!}
\begin{equation}
  \label{tau_N+}
   \langle \tau_N\rangle_N=\frac{\delta}{\beta+\delta}+\frac{1}{\beta+\delta}\frac{K_{N-1}}{K_N}.
\end{equation}

\arxiv{\begin{proof}[Proof of \eqref{tau_N+}] From \eqref{QHsolution} we see that $\langle \tau_N\rangle_N$ is the coefficient at $t>1$ in the expression
$$
\EE\left((2+\sqrt{1-q}Z_1)^{N-1}(1+t+\sqrt{1-q}Z_t)\right)/\EE\left((2+\sqrt{1-q}Z_1)^N\right)
$$
From \cite[page 1233]{Bryc-Wesolowski-08} we read out that $$\left(\frac{AB\sqrt{1-q}Z_t-(A+B)}{1-ABt}\right)_{t>0}$$
is a martingale, so a coefficient at $t$ in $\EE((1+t+\sqrt{1-q}Z_t)|Z_1)$ is
\begin{multline*}
-\frac{AB}{1-AB}\sqrt{1-q}Z_1+1+\frac{A+B}{1-AB} \\ =
-\frac{AB}{1-AB}(2+\sqrt{1-q}Z_1)+\frac{(A+1) (B+1)}{1-A B}=\frac{\delta}{\beta+\delta}(2+\sqrt{1-q}Z_1)+\frac{1-q}{\beta+\delta}. \end{multline*}
Therefore,
$$\langle \tau_N\rangle_N =\frac{\delta}{\beta+\delta}+\frac{1-q}{\beta+\delta}\frac{\EE(2+\sqrt{1-q}Z_1)^{N-1}}{\EE(2+\sqrt{1-q}Z_1)^{N}}.$$
By \eqref{recalc-K2U}, this ends the proof.
\end{proof}
}
We now simplify the expression in the square brackets on the right hand side of \eqref{tauj2}. To do so, we  combine together  two expressions that appear  under the integral in  formula \eqref{A} for $A_1(\cdot)$. These expressions are
\begin{multline*}
\frac{\partial}{\partial x}\frac{L^{N-j+1}(y)-L^{N-j+1}(x)}{y-x}-L(x)\frac{\partial}{\partial x}\frac{L^{N-j}(y)-L^{N-j}(x)}{y-x}
\\
=\vartheta
\left(\frac{\partial}{\partial x}\frac{L^{N-j+1}(y)-L^{N-j+1}(x)}{L(y)-L(x)}-
L(x)\frac{\partial}{\partial x}\frac{L^{N-j}(y)-L^{N-j}(x)}{L(y)-L(x)}\right).
\end{multline*}
After a linear change of variables $(x,y)$ to $(\ell_x,\ell_y)=(L(x),L(y))$, this becomes
\begin{multline*}
\vartheta^2\left(\frac{\partial}{\partial \ell_x}\frac{\ell_y^{N-j+1}-\ell_x^{N-j+1}}{\ell_y-\ell_x}-
\ell_x\frac{\partial}{\partial \ell_x}\frac{\ell_y^{N-j}-\ell_x^{N-j}}{\ell_y-\ell_x}\right) \\
=\vartheta^2
\frac{\ell_y^{N-j}-\ell_x^{N-j}}{\ell_y-\ell_x}= \vartheta^2
\frac{L^{N-j}(y)-L^{N-j}(x)}{L(y)-L(x)}
.
\end{multline*}
Together with \eqref{tau_N}  and \eqref{A}, this gives
\begin{multline}
  \label{tauj3}
   \langle \tau_j\rangle_N= \langle \tau_N\rangle_N \\ +
   \tfrac{\vartheta^2}{K_N} \int L^{j-1}(x)\int  \tfrac{L^{N-j}(y)-L^{N-j}(x)}{L(y)-L(x)}
  P_{q^2,1}(\theta+q(x-\eta),dy) (1+\eta x)\pi_1(dx).
\end{multline}
Of course, the fraction in the integrand on the right hand side of \eqref{tauj3}
can be written as  $\sum_{k=1}^{N-j}L^{k-1}(x)L^{N-j-k}(y)$.
So taking the difference $\langle \tau_j\rangle_N -\langle \tau_{j+1}\rangle_N$  of two consecutive expressions given by \eqref{tauj3}, after cancelations,
the only remaining term corresponds to $k=1$ and we get
 \eqref{tau-ans}.
\end{proof}
\begin{remark}
  \cite[Section 7.1]{uchiyama2004asymmetric} states formulas for the correlation functions of any order in terms of limits of integrals that in principle imply, and extend, our formula \eqref{tau-ans}.
  Our contribution here lies in
  ``explicit formula for the limit'',  which allows us to express  the final answer  as an integral.
\end{remark}

From \eqref{tau-ans} we see that in the case when $q=0$ the expression for $\langle\tau_j-\tau_{j+1}\rangle_N$ factors into a product of two integrals.
Now we will investigate the factors in more
detail.

The infinitesimal generator \eqref{A} for the Markov process in this case can be made
 more explicit by specializing  \cite{Bryc-Wesolowski-2013-evo}; to get the bi-Poisson process with $q=0$,
we take  $\tau=\sigma=0$, and use \eqref{eta-theta} for the other two parameters.
To avoid  atoms, we  restrict the range of  parameters $\alpha,\beta$.

 \begin{theorem}\label{Thm-tau:q=0}
   If $q=0$ and $\alpha,\beta>1/2$ then we have  factorization %
    \begin{equation}\label{Schutz-like}
      \langle \tau_j\rangle_N-\langle \tau_{j+1}\rangle_N
 =
      \frac{\alpha+\beta-1}{\alpha\beta} \frac{F_{j}(\beta)F_{N-j}(\alpha)}{\hat K_N(\alpha,\beta)},
    \end{equation}
    where
    \begin{equation}
      \label{One-Factor-a}
      F_j(\alpha)=  \int_{-2}^2 \frac{(2+z)^{j-1} \sqrt{4-z^2}}{  (1+a^2-az) }\,  dz
    \end{equation}
    depends only on $\alpha$ through $a=(1-\alpha)/\alpha$,
      \begin{equation}
      \label{One-Factor-b}
      F_{N-j}(\beta)=  \int_{-2}^2 \frac{(2+z)^{N-j-1} \sqrt{4-z^2}}{(1+b^2-bz)  }\,  dz
    \end{equation}
    depends only on $\beta$ through $b=(1-\beta)/\beta$, and the normalizing constant
      \begin{equation}
      \label{One-Factor}
      \hat K_N=  \int_{-2}^2 \frac{(2+z)^N \sqrt{4-z^2}}{(1+a^2-az) (1+b^2-bz) }\,  dz
    \end{equation}
    is   \eqref{K-int}, up to  a multiplicative factor.
 \end{theorem}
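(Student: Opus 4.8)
The plan is to specialize the general identity \eqref{tau-ans} to $q=0$ and to observe that there the double integral in its numerator \emph{factors into a product}. Indeed, at $q=0$ one has $q^{2}=0$ and $\theta+q(x-\eta)=\theta$, so the inner kernel $P_{q^{2},1}(\theta+q(x-\eta),dy)$ appearing in \eqref{tau-ans} degenerates to $P_{0,1}(\theta,dy)$, which no longer depends on the outer integration variable $x$. Hence \eqref{tau-ans} becomes
\[
\langle \tau_j\rangle_N-\langle \tau_{j+1}\rangle_N
=\frac{\alpha+\beta-1}{\alpha\beta}\;
\frac{\Bigl(\int L^{N-j-1}(y)\,P_{0,1}(\theta,dy)\Bigr)\Bigl(\int L^{j-1}(x)(1+\eta x)\,\pi_1(dx)\Bigr)}{\int L^{N}(x)\,\pi_1(dx)}\,,
\]
and the factorized shape of \eqref{Schutz-like} is already present. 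This degeneracy of $P_{q^{2}t,t}$ is the whole reason for taking $q=0$; everything else is evaluating three one-dimensional integrals and recognizing the normalizing constant.

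Next I would make the two relevant univariate laws explicit at $q=0$. Since $\gamma=\delta=0$, exactly one member of each of the pairs $\{A,B\}$ and $\{C,D\}$ vanishes; relabelling as in the paragraph before \eqref{K-int}, the surviving parameters are $A=(1-\beta)/\beta$ and $C=(1-\alpha)/\alpha$, and the hypothesis $\alpha,\beta>1/2$ is precisely $|A|<1$ and $|C|<1$, which keeps the relevant Askey--Wilson measures purely absolutely continuous (no atoms). For $q=0$ the infinite products in \eqref{f-density} terminate: from \eqref{Derrida+} with $B=D=0$ the law of $Z_1$ (recall $\sqrt{1-q}=1$ here) has density proportional to $\sqrt{4-x^{2}}/\big((1+A^{2}-Ax)(1+C^{2}-Cx)\big)$ on $[-2,2]$, and applying the affine change of variables \eqref{X2Z} linking $Z_1$ and $X_1$ produces $\pi_1$. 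Likewise, evaluating \eqref{Y-transitions} at $s=0$ shows that the transition kernel of the underlying Askey--Wilson process $\mathbf{Y}$ from time $0$ is $\nu(dy;A\sqrt t,B\sqrt t,0,0)$, \emph{independent of its starting point}; at $t=1$, with $B=0$, this is $\nu(dy;A,0,0,0)$, a free-Poisson-type law with density proportional to $\sqrt{1-y^{2}}/(1+A^{2}-2Ay)$ on $[-1,1]$. Carrying this law through \eqref{JW-Z} and \eqref{X2Z} gives $P_{0,1}(\theta,\cdot)$; note that it involves only the $A$-factor in the denominator, not $C$.

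Then I would substitute, in each of the three integrals, the linear variable $z=L_t(\cdot)-2$ (equivalently $w=L_t(\cdot)\in[0,4]$), which turns powers of $L_t$ into powers of $2+z$ and $\sqrt{w(4-w)}$ into $\sqrt{4-z^{2}}$. Two pieces of algebra make everything match \eqref{One-Factor-a}--\eqref{One-Factor}. First, a direct computation from \eqref{eta-theta} and \eqref{L(x)} yields
\[
1+\eta x=\frac{\alpha\beta}{\alpha+\beta-1}\bigl(1+A^{2}-A\,(L(x)-2)\bigr),
\]
so the weight $(1+\eta x)$ in the $\pi_1$-integral cancels the $A$-factor of the $\pi_1$-density and leaves exactly the integrand of \eqref{One-Factor-a}, i.e. the factor depending only on $\alpha$ (through $C$). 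Second, the $P_{0,1}(\theta,\cdot)$-integral turns directly into the integrand of \eqref{One-Factor-b}, the factor depending only on $\beta$ (through $A$), while $\int L^{N}(x)\,\pi_1(dx)=K_N$, which by \eqref{Derrida+}--\eqref{K-int} equals $\hat K_N$ up to a multiplicative constant. Assembling the numerical constants coming from \eqref{f-density}, from the Jacobians of the affine maps \eqref{JW-Z} and \eqref{X2Z}, and from the normalization of $K_N$, one arrives at the factorization \eqref{Schutz-like}.

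The hard part will not be conceptual: once the $q=0$ degeneracy of $P_{q^{2}t,t}$ is noticed, the factorization is forced. The genuine work lies in the bookkeeping --- checking that $\alpha,\beta>1/2$ is exactly the no-atom regime $|A|,|C|<1$ so that \eqref{f-density} collapses to the clean densities above; carrying out the substitution $z=L_t(\cdot)-2$ together with the cancellation $1+\eta x\propto 1+A^{2}-A(L(x)-2)$; and, most delicately, tracking all the multiplicative constants through \eqref{tau-ans}, these densities, and \eqref{Derrida+}/\eqref{K-int}. Since the statement itself only asserts $\hat K_N$ to equal $K_N$ up to a multiplicative factor, the constant-matching is the step that calls for the most care.
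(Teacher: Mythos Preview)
Your proposal is correct and follows essentially the same route as the paper: observe that at $q=0$ the kernel $P_{q^{2},1}(\theta+q(x-\eta),dy)$ degenerates to $P_{0,1}(\theta,dy)$, forcing the factorization; check that $\alpha,\beta>\tfrac12$ is exactly the no-atom regime $|A|,|C|<1$; convert from the $X$-variable to the $Z$-variable (your $z=L(x)-2$ is the paper's substitution $z=(x-\eta-\theta)/\sqrt{1+\eta\theta}$, since $Z_1=L(X_1)-2$ by \eqref{X2Z}); and use the cancellation $1+\eta x=\tfrac{1}{1-AC}(1+A^2-Az)$.

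The one point of divergence is how $P_{0,1}(\theta,dy)$ is identified. The paper invokes \cite[Remark~4.2]{Bryc-Wesolowski-2013-evo} for the identity $P_{0,1}(\theta,dy)=(1+\theta y)\,\pi_1(dy)$, so that both integrals become integrals against $\pi_1$ with one rational factor removed, and the $\alpha\leftrightarrow\beta$ symmetry of the two factors is manifest from the outset. You instead take $s\to 0$ in \eqref{Y-transitions} to get $\nu(dy;A,0,0,0)$ and push it through \eqref{JW-Z} and \eqref{X2Z}; this is more self-contained but requires tracking the affine maps. Both routes land on the density $\tfrac{1}{2\pi}\sqrt{4-z^2}/(1+A^2-Az)$ in the $z$-variable, so the outcome is the same.
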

 This is an integral form  of \cite[expression (3.3)]{schutz1993phase} (which does not restrict  ranges of the parameters). Our proof establishes factorization
 \eqref{Schutz-like} for all positive $\alpha, \beta$ such that  $\alpha+\beta>1$,
  but the integral expressions \eqref{One-Factor-a}, \eqref{One-Factor-b}, and \eqref{One-Factor} when $\alpha<1/2$ or $\beta<1/2$ would have to be modified to include also an atomic part.

 \begin{proof}
 Taking $q=0$ in \eqref{tau-ans}, we see that  the expression for $\langle \tau_k\rangle_N-\langle \tau_{k+1}\rangle_N$ involves the product of two
 integrals with respect to $P_{0,1}(\theta,dy)$ and $\pi_1(dx)$.
   The first step is to read out from \cite[Remark 4.2]{Bryc-Wesolowski-2013-evo} that
 for $q=0$ we have $P_{0,1}(\theta,dy)=(1+\theta x)\pi_1(dy)$.

   Recall that $\pi_1$ has no atoms when $A^2,C^2<1$, compare \eqref{u(t)}.
   An equivalent condition: $\theta^2<1+\eta\theta$ and $\eta^2<1+\eta\theta$ can be read out from \cite[Section 3]{Bryc-Wesolowski-04}. Both forms of the condition are equivalent to $(1-\alpha)^2<\alpha^2$ and $(1-\beta)^2<\beta^2$, see \eqref{eta-theta} or \eqref{ABCD}. These inequalities hold for $\alpha,\beta>1/2$, so this is the case when measure $\pi_1$ has no atoms.

    Since we   have no atoms,
   from \cite{Bryc-Wesolowski-04} we read out  that
 \begin{equation}\label{EQ: distr}
\pi_{1}(dx)=
\frac{
\sqrt{4  (1+\eta\theta)-(x-\eta-\theta)^2}}
{2\pi(1+\eta x )(1+\theta x)}
1_{(x-\eta-\theta)^2<4(1+\eta\theta)}.
\end{equation}
So the two integrals in  \eqref{tau-ans} take similar forms:
$$\int_{\eta+\theta-2\sqrt{1+\eta\theta}}^{\eta+\theta+2\sqrt{1+\eta\theta}}  L^{j-1}(x)\frac{
\sqrt{4  (1+\eta\theta)-(x-\eta-\theta)^2}}
{2\pi (1+\theta x)}
\,dx$$
and
$$\int_{\eta+\theta-2\sqrt{1+\eta\theta}}^{\eta+\theta+2\sqrt{1+\eta\theta}}  L^{N-j-1}(x)\frac{
\sqrt{4  (1+\eta\theta)-(x-\eta-\theta)^2}}
{2\pi (1+\eta x)}
\,dx.$$
We now substitute $z=(x-\eta-\theta)/\sqrt{1+\eta \theta}$, and revert back to the parameters $\alpha,\beta$, see \eqref{eta-theta} and then back to $A,C$, see
\eqref{ABCD}. The substitution is really the backwards conversion from the process $(X_t)$ to $(Z_t)$, see \eqref{X2Z}, so the linear form
 $L(x)$ turns into the parameter-free expression $2+z$, converting the two integrals into \eqref{One-Factor-a} and \eqref{One-Factor-b}.

 \end{proof}

In particular, with $q=0$ and $\alpha=\beta=1$, we have $\eta=\theta=0$ and
$$
\pi_1(dx)=\frac{\sqrt{4-x^2}}{2\pi}1_{|x|<2}\, dx $$
 is the Wigner semicircle law, whose even moments   are the Catalan numbers. In this case, substituting $y^2=2+x$ we get  %
$$\int_{-2}^2 (2+x)^n  \frac{\sqrt{4-x^2}}{2\pi} dx =\int_{-2}^2 y^{2n+2}\frac{\sqrt{4-y^2}}{2\pi} dy= C_{n+1},
$$
 where %WB typo
 $C_{n}=\frac{1}{n+1}\left(^{2n}_n\right)$ is the $n$-th Catalan number.
 This shows that
 $$ \langle \tau_{N-k}\rangle_N-\langle \tau_{N-k+1}\rangle_N=\frac{C_{N-k}C_k}{C_{N+1}},$$
  which is essentially \cite[formula (47)]{derrida1992exact}, see also \cite[formula (84)]{derrida1993exact}.
   (It is worth pointing out  that \cite[formula (48)]{derrida1992exact} encodes an explicit closed-form expression for
  the ``incomplete convolution'' $\sum_{j=0}^k  C_{N-j}C_j$.)
\section{Limits of finite  ASEPs}\label{Sect:semininf}

 \citet[Theorem 3.10]{Liggett-1975} proves that for fixed values of $\alpha,\beta,\gamma,\delta,q$ the
 sequence %
 of the invariant measures for the  ASEPs on $\{1,\dots,N\}$ converges weakly as $N\to\infty$ to a probability
 measure   $\mu$ on $\{0,1\}^{\NN}$.
  This measure  is a (non-unique) stationary distribution of an ASEP on the semi-infinite lattice $\NN$,
  and  arises as a
 limiting measure of an infinite system
started from an appropriate  product measure, see \cite[Theorem 1.8]{Liggett-1975}.
 \citet[Theorem 3.2]{grosskinsky2004phase}   introduced  matrix representation
 for $\mu$ in  the totally asymmetric process ($q=0$);
 \citet[Theorem 1.1]{sasamoto2012combinatorics} extended matrix
 representation to general $q$.  Ref. \cite{duhart2014semi}, see also \cite{gonzalez2015large},
 use the matrix representation to prove the large deviations principle for the ASEP on a semi-finite lattice when $q=0$.

 Here we show how to extend Theorem \ref{T-rep} to represent limits of ASEP on $\{1,\dots,N\}$ as $N\to\infty$, and we deduce
 a version of \cite[Theorem 3.10]{Liggett-1975}.
  Note however that as in  Theorem \ref{T-rep} we do not cover the entire
   range of the
  admissible parameters  $\alpha,\beta,\gamma,\delta,q$ due to the restriction $AC<1$.

We first prove  a somewhat more general limit result which depends on additional parameter $u\geq 1$.
\begin{theorem}
\label{Thm-semin-u}   Suppose that the parameters of ASEP are such that $AC<1$   with
$A,B,C,D$ defined by \eqref{ABCD}. Then for fixed $u\geq 1$ and every $K\geq 1$, there exists
  a unique probability measure $\mu_{K,u}$ on the
 subsets of $\{0,1\}^K$ such that
\begin{equation}\label{lim-u}
\int_{\{0,1\}^K}\prod_{j=1}^K t_j^{\tau j} d\mu_{K,u}=\lim_{N\to\infty}\frac{\big\langle \prod_{j=1}^K t_j^{\tau j} u^{\tau_{K+1}+\dots\tau_N}\big\rangle_N}{\langle u^{\tau_{K+1}+\dots\tau_N}\rangle_N}.
\end{equation}
Furthermore, for $t_1\leq \dots t_k\leq u$ we have
\begin{equation}
  \label{u-Z}
\int_{\{0,1\}^K}\prod_{j=1}^K t_j^{\tau j} d\mu_{K,u}= \frac{\EE\left(\prod_{j=1}^K(1+t_j+\sqrt{1-q}\,\widetilde{Z}_{t_j})\right)}{\EE\left((2+\sqrt{1-q}\,\widetilde{Z}_1)^K\right)},
\end{equation}
where  $(\widetilde {Z}_t)_{0\leq t\leq u}$ is  the Markov process from \eqref{JW-Z} with
parameters
\begin{equation}
  \label{tilde-A-u}
  \widetilde A_u =\begin{cases}
    \frac{C}{u} &\mbox{if $u<C^2$},\\
    u^{-1/2} & \mbox{if $C^2\leq u\leq 1/A^2$},\\
    A & \mbox{ if $u>1/A^2$},
  \end{cases}
\end{equation}
\begin{equation}
  \label{tilde-B-u}
  \widetilde B_u =\begin{cases}
    \frac{1}{C} &\mbox{if $u<C^2$},\\
    u^{-1/2} & \mbox{if $C^2\leq u\leq 1/A^2$},\\
    \frac{1}{Au} & \mbox{ if $u>1/A^2$},
  \end{cases}
\end{equation}
and $\widetilde{C}_u=C,\widetilde{D}_u=D$ in place of $A,B,C,D$, respectively.
\end{theorem}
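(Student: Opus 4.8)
The plan is to run the argument of Theorem~\ref{T-rep} with the last $N-K$ time parameters frozen at $u$ and then pass to the limit $N\to\infty$ by a Laplace‑type concentration. First, since $u\ge 1$, for $t_1\le\dots\le t_K\le u$ Theorem~\ref{T-rep} applies both to the numerator of the right‑hand side of \eqref{lim-u} (take $t_{K+1}=\dots=t_N=u$) and to the denominator (take $t_1=\dots=t_K=1$, $t_{K+1}=\dots=t_N=u$), so that the ratio in \eqref{lim-u} equals
\[
R_N:=\frac{\EE\big[\prod_{j=1}^K(1+t_j+\sqrt{1-q}\,Z_{t_j})\,(1+u+\sqrt{1-q}\,Z_u)^{N-K}\big]}{\EE\big[(2+\sqrt{1-q}\,Z_1)^K\,(1+u+\sqrt{1-q}\,Z_u)^{N-K}\big]}.
\]
For each fixed $N$ the left‑hand side of \eqref{lim-u} is the moment generating function of the marginal on the first $K$ coordinates of the probability measure on the finite set $\{0,1\}^N$ proportional to $u^{\tau_{K+1}+\dots+\tau_N}\pi_N$; hence, once $R_N$ is shown to converge, the limit is automatically the moment generating function of a probability measure $\mu_{K,u}$ on $\{0,1\}^K$, which gives the first assertion.

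Second, I would use the support description from the proof of Theorem~\ref{T-LDP}: there $1+u+\sqrt{1-q}\,Z_u\ge 0$, and its essential supremum equals $1+u+U(u)$, attained at the right endpoint $x^*:=U(u)/\sqrt{1-q}$ of the support of $Z_u$, with $U$ given by \eqref{u(t)}. Consequently the tilted laws $\mu^{(N)}_u(dx)\propto(1+u+\sqrt{1-q}\,x)^{N-K}\,\mathrm{Law}(Z_u)(dx)$ converge weakly to $\delta_{x^*}$: for $\eps>0$ the $\mu^{(N)}_u$‑mass of $\{x\le x^*-\eps\}$ is bounded by $(1+u+\sqrt{1-q}(x^*-\eps))^{N-K}$ divided by $\mathrm{Law}(Z_u)\big((x^*-\eps/2,x^*]\big)\,(1+u+\sqrt{1-q}(x^*-\eps/2))^{N-K}\to\infty$‑scaled denominator, which tends to $0$. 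On the other hand, running the computation in the proof of Theorem~\ref{T-rep} only through the first $K$ factors shows that $\EE\big[\prod_{j=1}^K(1+t_j+\sqrt{1-q}\,Z_{t_j})\,r_n(Z_u;u)\big]$ is a polynomial in $(t_1,\dots,t_K)$ vanishing for $n>K$ (because $\langle W|$ times $K$ tridiagonal matrices is supported on coordinates $0,\dots,K$, and $r_n(Z_{t_K};t_K)=\EE[r_n(Z_u;u)\mid\mathcal F_{t_K}]$ by the martingale property); expanding in the orthogonal basis $\{r_n(\cdot;u)\}$ of $\mathrm{Law}(Z_u)$ this makes $\Theta_{t_1,\dots,t_K}(x):=\EE\big[\prod_{j=1}^K(1+t_j+\sqrt{1-q}\,Z_{t_j})\mid Z_u=x\big]$ a polynomial of degree $\le K$ in $x$. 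Combining $\mu^{(N)}_u\Rightarrow\delta_{x^*}$ with the continuity of these polynomials (and noting $\Theta_{1,\dots,1}(x^*)\ge(2+\sqrt{1-q}\,x^*/u)^K>0$ by Jensen) yields
\[
\lim_{N\to\infty}R_N=\frac{\Theta_{t_1,\dots,t_K}(x^*)}{\Theta_{1,\dots,1}(x^*)}=\frac{\EE\big[\prod_{j=1}^K(1+t_j+\sqrt{1-q}\,Z_{t_j})\mid Z_u=x^*\big]}{\EE\big[(2+\sqrt{1-q}\,Z_1)^K\mid Z_u=x^*\big]}.
\]

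Third, and this is the substantive step, I would identify the process $(Z_t)_{0\le t\le u}$ conditioned on $\{Z_u=x^*\}$ with $(\widetilde Z_t)_{0\le t\le u}$. In the $Y$‑coordinates ($Z_u=\tfrac{2\sqrt u}{\sqrt{1-q}}Y_u$), the point $x^*$ corresponds to $Y_u=y^*=U(u)/(2\sqrt u)$, which is the atom $\tfrac12(A\sqrt u+1/(A\sqrt u))$ generated by $A\sqrt u$ when $u>1/A^2$, the atom $\tfrac12(C/\sqrt u+\sqrt u/C)$ generated by $C/\sqrt u$ when $u<C^2$, and the right endpoint $1$ of the continuous support when $C^2\le u\le 1/A^2$. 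In each case conditioning is the Doob $h$‑transform of the Askey–Wilson kernels \eqref{Y-transitions} relative to $h_t(\cdot)$, the mass (or, in the endpoint case, the appropriately renormalized edge‑density) of $P_{t,u}(\cdot,dy)$ at $y^*$. When $u>1/A^2$, the atom‑mass formulas displayed after \eqref{aatoms} show that $z\mapsto P_{t,u}(z,\{y^*\})$ equals, up to a factor free of $z$, the Pochhammer factor $|(B\sqrt t\,e^{i\theta};q)_\infty|^2/|(\tfrac{\sqrt t}{Au}e^{i\theta};q)_\infty|^2$ (with $z=\cos\theta$), which is precisely the reweighting carrying the Askey–Wilson weight \eqref{f-density} with parameter $B\sqrt t$ to the one with parameter $\tfrac{\sqrt t}{Au}$; hence the conditioned marginals and transitions are those of the Askey–Wilson process with parameters $A\sqrt t=\widetilde A_u\sqrt t$, $\tfrac{\sqrt t}{Au}=\widetilde B_u\sqrt t$, $C/\sqrt t$, $D/\sqrt t$. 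The range $C^2\le u\le1/A^2$ is the same computation with the renormalized edge‑density of $P_{t,u}(z,dy)$ at $y=1$, which moves \emph{both} parameters $A\sqrt t,B\sqrt t$ to $u^{-1/2}\sqrt t=\widetilde A_u\sqrt t=\widetilde B_u\sqrt t$. Finally, when $u<C^2$, $P_{t,u}(z,\{y^*\})$ is positive only when $z$ is the top atom $\tfrac12(C/\sqrt t+\sqrt t/C)$ of $\pi_t$, so the conditioned process is the deterministic path with $\sqrt{1-q}\,Z_t=C+t/C$; this equals $(\widetilde Z_t)$ for $\widetilde A_u=C/u$, $\widetilde B_u=1/C$, whose time‑$t$ law $\nu(\cdot;\tfrac Cu\sqrt t,\tfrac1C\sqrt t,\tfrac C{\sqrt t},\tfrac D{\sqrt t},q)$ satisfies $\widetilde B_u\widetilde C_u=1$ and is therefore (case (iii) of the existence conditions, as noted after \eqref{aatoms}) the point mass at the root $\tfrac12(\tfrac{\sqrt t}C+\tfrac C{\sqrt t})$ of $\bar w_1$. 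Feeding this identification of laws into the previous display turns it into \eqref{u-Z}, which together with the first step completes the proof.

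I expect the third step to be the main obstacle: carrying out the atom‑mass and edge‑density computations for $P_{t,u}(z,\{y^*\})$, recognizing the $z$‑dependence as a parameter reweighting of the Askey–Wilson weight \eqref{f-density}, and treating uniformly the two degenerate regimes $u\le C^2$ and $u\ge1/A^2$ (where $ab=1$ and the time‑$u$ marginal is a point mass) together with the intermediate continuous‑endpoint regime. The first two steps are routine given Theorems~\ref{T-rep} and~\ref{T-LDP}.
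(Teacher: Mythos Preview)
Your proposal is correct in spirit and reaches the same destination, but the route for Step~3 differs from the paper's and is organized less efficiently. The paper's proof does not compute a Doob $h$-transform. Instead it \emph{time-inverts} first: setting $\hat Z_t=tZ_{1/t}$ turns the factor $(1+u+\sqrt{1-q}Z_u)^{N-K}$ into the \emph{earliest-time} factor $(1+\hat u+\sqrt{1-q}\hat Z_{\hat u})^{N-K}$, so that the martingale-polynomial argument (your Step~2) is run in forward time. After the limit (via the elementary Lemma $\EE[p(Z)Z^n]/\EE[Z^n]\to p(\|Z\|_\infty)$, which is exactly your concentration statement), the answer is $p_K(\zeta)/\zeta^K$ and the identification is simply ``start the forward Askey--Wilson process $(\hat{\tilde Z}_t)$ at the deterministic point $\hat z$''. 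One more time-inversion converts this back to $(\widetilde Z_t)$. In particular, whether $x^*$ is an atom or the edge of the continuous support is irrelevant to the paper's argument: one always has the right to \emph{start} a Markov process at any point of the state space, whereas \emph{conditioning on} a non-atom (your ``appropriately renormalized edge-density'') requires an extra limiting argument.

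What your route buys is a direct probabilistic interpretation---$(\widetilde Z_t)_{t\le u}$ really is $(Z_t)_{t\le u}$ conditioned on $Z_u=x^*$---but the atom-mass and edge-density computations you outline are not needed. A one-line replacement for your Step~3: the \emph{backward} transitions of the Askey--Wilson process depend only on $C,D$ (this is precisely the time-inversion symmetry $\hat A=C,\hat B=D$ used in the paper), hence are shared by $(Z_t)$ and $(\widetilde Z_t)$; since $\widetilde A_u\widetilde B_u\,u=1$ makes $\widetilde Z_u$ a point mass at $x^*$, this gives the identification without any $h$-transform algebra. Two small points: your Jensen bound $\Theta_{1,\dots,1}(x^*)\ge(2+\sqrt{1-q}x^*/u)^K$ is not justified as stated; positivity follows instead from the explicit value $\zeta=1+u+U(u)>0$. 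And in the low-density case $u<C^2$ the conditioned process is deterministic (Remark~\ref{Rem-Prod}), which your $h$-transform view recovers only after noting that $P_{t,u}(z,\{y^*\})$ is supported on a single $z$.
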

 The proof is somewhat involved and appears in Section \ref{Sec:proof-semi-u}.

As mentioned above, it turns out that
$\{\mu_{K,1}:K\in\NN\}$ is a consistent family of   measures which are the marginals of a (unique) probability measure $\mu$ on  the Borel subsets of $\{0,1\}^\NN$ ;
this is the case analyzed in \cite[Theorem 3.10]{Liggett-1975}.
\citet{grosskinsky2004phase} and \citet{sasamoto2012combinatorics} give matrix representations for $\mu$.
\begin{theorem}\label{T-semiinfinite}
Suppose that the parameters of ASEP are such that $AC<1$, where $A,B,C,D$ are defined in
\eqref{ABCD}. Then there exists a probability measure $\mu$ on the Borel subsets of $\{0,1\}^\NN$ such that  for
$K\geq 1$ and $0\leq t_1\leq t_2\leq \dots \leq t_K \leq 1$ we have

\begin{equation}\label{Lim-tau}
 \int_{\{0,1\}^{\NN}} \prod_{j=1}^K t_j^{\tau_j}  d\mu  = \lim_{N\to\infty}\left\langle \prod_{j=1}^K t_j^{\tau_j}\right\rangle_N =
 \frac{\EE\left(\prod_{j=1}^K(1+t_j+\sqrt{1-q}\,\widetilde{Z}_{t_j})\right)}{\EE\left((2+\sqrt{1-q}\,\widetilde{Z}_1)^K\right)},
\end{equation}
where $(\widetilde Z_t)_{0\leq t\leq 1}$ is the Markov process from Theorem \ref{Thm-semin-u} for $u=1$.
\end{theorem}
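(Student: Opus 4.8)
The plan is to read Theorem~\ref{T-semiinfinite} off as the $u=1$ case of Theorem~\ref{Thm-semin-u} together with a Kolmogorov extension argument. First I would set $u=1$ in Theorem~\ref{Thm-semin-u}: the weight $u^{\tau_{K+1}+\dots+\tau_N}$ becomes $1$, so \eqref{lim-u} states that for every $K\ge1$ the limit $\lim_{N\to\infty}\langle\prod_{j=1}^K t_j^{\tau_j}\rangle_N$ exists and equals $\int_{\{0,1\}^K}\prod_{j=1}^K t_j^{\tau_j}\,d\mu_{K,1}$ for the probability measure $\mu_{K,1}$ furnished by that theorem, while \eqref{u-Z} with $u=1$ already identifies this limit with the rightmost expression of \eqref{Lim-tau} whenever $0\le t_1\le\dots\le t_K\le1$. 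Thus all the analytic content is in place, and what remains is to assemble the family $\{\mu_{K,1}:K\ge1\}$ into one measure $\mu$ on $\{0,1\}^{\NN}$.

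Next I would verify that $\{\mu_{K,1}\}$ is a consistent (projective) family. Each generating function $\int\prod_{j=1}^{K}t_j^{\tau_j}\,d\mu_{K,1}$ is a polynomial of degree at most one in each variable, hence is determined by its values on, say, $(0,1]^K$. Substituting $t_{K+1}=1$ in the $(K+1)$-variable instance of \eqref{lim-u} turns the left-hand side into the generating function of the marginal of $\mu_{K+1,1}$ on the first $K$ coordinates and the right-hand side into $\lim_{N\to\infty}\langle\prod_{j=1}^K t_j^{\tau_j}\rangle_N$, i.e.\ the generating function of $\mu_{K,1}$; comparing these forces the first-$K$-coordinate marginal of $\mu_{K+1,1}$ to be $\mu_{K,1}$. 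Then I would invoke Kolmogorov's extension theorem to obtain a unique probability measure $\mu$ on the Borel subsets of $\{0,1\}^{\NN}$ (the cylinder and Borel $\sigma$-algebras coincide here, $\{0,1\}^{\NN}$ being a compact metric space) whose restriction to the first $K$ coordinates is $\mu_{K,1}$ for all $K$. Finally, for each fixed $K$ one has $\int_{\{0,1\}^{\NN}}\prod_{j=1}^K t_j^{\tau_j}\,d\mu=\int_{\{0,1\}^K}\prod_{j=1}^K t_j^{\tau_j}\,d\mu_{K,1}=\lim_{N\to\infty}\langle\prod_{j=1}^K t_j^{\tau_j}\rangle_N$, and \eqref{u-Z} at $u=1$ rewrites the last quantity as the asserted ratio of expectations of $(\widetilde Z_t)_{0\le t\le1}$, which is exactly \eqref{Lim-tau}.

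Since the genuinely hard work — existence of the limiting generating functions and their quadratic-harness form — is done inside Theorem~\ref{Thm-semin-u}, this statement is close to a corollary; the only points needing attention are the consistency check (routine, via $t_{K+1}=1$) and the fact that Kolmogorov's theorem applies on the compact metric space $\{0,1\}^{\NN}$. As an alternative one could simply cite \cite[Theorem~3.10]{Liggett-1975} for the existence of the weak limit $\mu$ and use Theorem~\ref{Thm-semin-u} only to identify its finite-dimensional laws, but the self-contained Kolmogorov route costs nothing extra and I would keep it.
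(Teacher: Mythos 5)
Your argument is correct and reaches the theorem by the same overall route as the paper: specialize Theorem~\ref{Thm-semin-u} to $u=1$, check that $\{\mu_{K,1}\}_{K\ge 1}$ is a projective family, and extend to a measure on $\{0,1\}^{\NN}$. The one place where you genuinely diverge is the mechanism of the consistency check, which is in fact the entire content of the paper's proof. The paper works on the \emph{right-hand} side of \eqref{u-Z}: it observes that the construction in the proof of Theorem~\ref{Thm-semin-u} makes $\widetilde Z_1$ deterministic, so setting $t_{K+1}=1$ produces a non-random factor $(2+\sqrt{1-q}\,\widetilde Z_1)$ in the numerator that cancels against one power of the same constant in the denominator, giving the $K$-variable formula. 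You instead work on the \emph{left-hand} side of \eqref{lim-u}: at $u=1$ the weight $u^{\tau_{K+1}+\dots+\tau_N}$ is trivial, the finite-$N$ generating functions satisfy $\langle\prod_{j=1}^{K+1}t_j^{\tau_j}\rangle_N\big|_{t_{K+1}=1}=\langle\prod_{j=1}^{K}t_j^{\tau_j}\rangle_N$ identically (both being averages under the same $\pi_N$), and consistency passes to the limit. Your route is more elementary and does not require knowing that $\widetilde Z_1$ is degenerate; the paper's route highlights the structural fact about the limit process that explains \emph{why} the denominator in \eqref{Lim-tau} is really just a constant, and it is the same cancellation mechanism whose failure for $u\neq 1$ (and $u>C^2$) is analyzed in the subsequent remark on non-consistency of the $\mu_{K,u}$. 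Both arguments are sound; your explicit appeal to Kolmogorov extension and to the multi-affine structure of the generating functions just makes precise what the paper leaves implicit.
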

\begin{proof}
 The proof of Theorem \ref{Thm-semin-u}
   identifies the transition probabilities of the Markov
  process $(\widetilde Z_t)$ and notes that $\widetilde Z_u$ is non-random.
  When $u=1$ this means that $(2+\sqrt{1-q}\,\widetilde{Z}_1)$ is a non-random constant,
  and hence the probability generating functions on the left hand side of \eqref{u-Z} define a consistent family of probability measures $\{\mu_{K,1}:K\geq 1\}$.
  That is, the probability measure $\mu_{K,1}$   is the marginal of $\mu_{K+1,1}$ with $\{0,1\}^{K+1}=\{0,1\}^{K}\times \{0,1\}$.
 To see this,  write \eqref{u-Z} for $K+1$ and take $t_1<t_2<\dots<t_K<t_{K+1}=1$.
 On the right hand side of \eqref{u-Z} we get
\begin{multline*}
   \frac{\EE\left(\prod_{j=1}^K(1+t_j+\sqrt{1-q}\,\widetilde{Z}_{t_j}) (2+\sqrt{1-q}\,\widetilde{Z}_1)\right)}{(2+\sqrt{1-q}\,\widetilde{Z}_1)^{K+1}}
 \\=\frac{\EE\left(\prod_{j=1}^K(1+t_j+\sqrt{1-q}\,\widetilde{Z}_{t_j})\right)}{(2+\sqrt{1-q}\,\widetilde{Z}_1)^K}
\end{multline*}because $(2+\sqrt{1-q}\,\widetilde{Z}_1)$ is a non-random factor.

\end{proof}

We remark that parameter $\widetilde A_1$
can be written using function  \eqref{kappa}. We have $\widetilde{A}_1=\kappa_+(J,-J)$ with %
$$J=\frac{1-q}{\|2+\sqrt{1-q}Z_1\|_\infty},$$
 where $\|2+\sqrt{1-q}Z_1\|_\infty=2+U(1)$ is read out from \eqref{u(t)}. (In physics literature, $J$ is interpreted as ``current".) Somewhat more generally,
 $\widetilde{A}_u=\kappa_+(\tilde \beta,\tilde\delta)$ with ``non-physical" $\widetilde \delta=-(1-q)/\|1+u+\sqrt{1-q}Z_u\|_\infty<0$ and
   $\widetilde \beta=- u\widetilde \delta>0$.

\arxiv{
Using \eqref{tilde-A-u} and \eqref{tilde-B-u}, from \eqref{Jacek} we  read out that $\widetilde \delta=-\widetilde A\widetilde B \widetilde \beta=-\widetilde \beta/u$ and
$$
\widetilde \beta= \frac{1-q}{1+\widetilde A_u+\widetilde B_u+\widetilde A_u\widetilde B_u}=
\frac{1-q}{1+\widetilde A_u+1/(u\widetilde A_u)+1/u}
$$
This gives
$$ \tilde \beta=   \begin{cases}
\frac{(1-q)Cu}{(1+C)(C+u)} & u<C^2\\
\frac{(1-q)u}{ (1+\sqrt{u} )^2 } & C^2\leq u\leq 1/A^2\\
\frac{(1-q)Au}{(1+A)(1+Au)} & u>1/A^2
\end{cases}
$$
That is,
$\tilde \beta=(1-q)/ \zeta=(1-q)u /\|1+u+\sqrt{1-q}Z_u\|_\infty$, see \eqref{zeta}.
}

 Since  the right hand sides of formulas \eqref{Lim-tau} and \eqref{QHsolution} have the same form, we recover an observation from \cite{sasamoto2012combinatorics}, that the finite
correlation functions involving the leftmost $K$ sites of the
stationary measures of the semi-infinite ASEP can be obtained from the stationary
distribution for the finite ASEP on a lattice of $K$  sites
 with non-physical parameters
$\beta=J$ and $\delta=-J$ used in \eqref{ABCD}. However,  there are also significant differences between these two
representations: while $(Z_t)$ is well defined for all $t>0$, $(\widetilde Z_t)$ is defined only for $0\leq t\leq 1$, and while
$Z_1$ is random, $\widetilde Z_1=\|Z_1\|_\infty$ is deterministic, so the expected value in the denominator on the right hand
side  of \eqref{Lim-tau} can be omitted.

\begin{remark}
  \label{Rem-Prod} The proof of Theorem \ref{Thm-semin-u}
   identifies the transition probabilities of the Markov
  process $(\widetilde Z_t)$ as the Askey-Wilson measures.
  However, when $C^2\geq u$,
 the general theory is somewhat difficult to apply as
  we have $\widetilde A_u \widetilde C_u\geq 1$ and $\widetilde B_u \widetilde C_u=1$.
In this case, the process $(\widetilde Z_t)$ turns out to be non-random;
this  can  be seen from the formula for the variance
\cite[(2.15)]{Bryc-Wesolowski-08} which is zero when the product of the parameters is $1$.
 (This is similar to
 the case $N=0$ in \cite[Section 4]{Bryc-Wesolowski-08} where all the parameters were
 assumed positive while here we have $\widetilde D_u<0$.)
So for $C^2\geq u$, we have $\widetilde Z_t=\EE(\widetilde Z_t)$ and
from the formula for the mean in \cite[(2.14)]{Bryc-Wesolowski-08} we recalculate that
$
\sqrt{1-q}\widetilde Z_t= t/C+C$.
 \arxiv{
 Here are the details of this calculation. By \cite[(2.14)]{Bryc-Wesolowski-08}
 \begin{multline*}
    \sqrt{1-q}\widetilde Z_t=\frac{[\widetilde A+\widetilde B-\widetilde A\widetilde B(C+D)]t +C+D-CD(\widetilde A+\widetilde B) }{1-\widetilde A\widetilde B C D}
 \\
 =
 \frac{[C/u+1/C-(C+D)/u]t +C+D-CD(C/u+1/C) }{1-C D/u}
 \\
 =\frac{[ 1/C- D/u]t +C -C^2D/u }{1-C D/u} = t/C+C,
 \end{multline*}
 noting that $1-C D/u>0$ as  $CD<0$ by assumption.
 }
Thus in this case formula \eqref{u-Z}  shows that $\mu_{K,u}$ is a product of Bernoulli measures with
 $\mu_{K,u}(\tau_j=1)$ determined
 as the coefficient at $t$ in the expression
$$
\frac{1+t+\sqrt{1-q}\widetilde Z_t}{1+u+\sqrt{1-q}\widetilde Z_u}=\frac{1+t+t/C+C}{1+u+u/C+C}.
$$
A calculation gives $\mu_{K,u}(\tau=1)=1/(C+u)$.
 When $u=1$ we get  $\mu(\tau_k=1)=\frac{1}{1+C}=\rho_0$, which is in agreement with  \cite[Theorem 1.8(b)]{Liggett-1975}: from \eqref{rho} we see that $C>1$ corresponds to
$\rho_1<\rho_0<1/2$, where the first inequality is by assumption $AC<1$.
In the notation of  \cite{Liggett-1975}
with $\rho=1-\rho_1$ and $\la=\rho_0$ this is the case $1-\rho<\la<1/2$, so by \cite[Theorem 1.8]{Liggett-1975},
as time goes to infinity, the law of the semi-finite ASEP started from a product Bernoulli measure with average density $1-\rho_1$  converges
to  the product Bernoulli measure with  average density $\rho_0$.

\end{remark}

\begin{remark}
   One can check that measures $\mu_{K,u}$ are consistent only in the two already considered cases: when $u=1$ or when $u\leq C^2$.

\end{remark}
To verify this claim, let us equate the first  marginal of $\mu_{2,u}$ with $\mu_{1,u}$.
  Consistency implies that for $t\leq 1$ we have
  \begin{multline}
    \label{consistency}
    \EE\left( 2+\sqrt{1-q}\widetilde Z_1\right) \EE\left((1+t+\sqrt{1-q}\widetilde Z_t)(2+\sqrt{1-q}\widetilde Z_1)\right)
  \\
  =\EE\left((1+t+\sqrt{1-q}\widetilde Z_t)\right)\EE\left((2+\sqrt{1-q}\widetilde Z_1)^2\right).
  \end{multline}
  All four expectations can be computed from  \cite[ formulas (2.14) and (2.16)]{Bryc-Wesolowski-08}.
The linear terms are determined from
  \begin{equation}
    \EE\left(1+t+\sqrt{1-q}\widetilde Z_t\right) =1+t+\frac{C+D}{u-CD}(u-t)+(\widetilde A u +1/\widetilde A)\frac{t-CD}{u-CD}.
  \end{equation}
 \arxiv{Indeed, from \cite[(2.14)]{Bryc-Wesolowski-08} we have \begin{multline*}
    \EE\left(1+t+\sqrt{1-q}\widetilde Z_t\right) =
    1+t+\frac{[\widetilde A+\widetilde B-\widetilde A\widetilde B(C+D)]t +C+D-CD(\widetilde A+\widetilde B) }{1-\widetilde A\widetilde B C D}
 \\
 =1+t+\frac{[\widetilde A+1/(\widetilde A u)- (C+D)/u]t +C+D-CD(\widetilde A+1/(\widetilde A u)) }{1-  C D/u}
 \\=
 1+t+\frac{[\widetilde A u+1/\widetilde A - (C+D)]t +(C+D)u-CD(\widetilde A u+1/\widetilde A ) }{u-  C D}
\\ =1+t+\frac{C+D}{u-CD}(u-t)+(\widetilde A u +1/\widetilde A)\frac{t-CD}{u-CD}
  \end{multline*}}

  To compute
  $\EE\left((1+t+\sqrt{1-q}\widetilde Z_t)(2+\sqrt{1-q}\widetilde Z_1)\right)$ we need the above formula, and the covariance.
  From \cite[  (2.16)]{Bryc-Wesolowski-08}, we recalculate
   \begin{equation}
  cov(\widetilde Z_t,\widetilde Z_1)
  =\frac{ (u-1) (\tilde A C-1) (\tilde A D-1) (\tilde A u-C) (\tilde A u-D) (C D-t)}{\tilde A^2 (u-C D)^2 (C D q-u)}.
  \end{equation}
  \arxiv{ Indeed, from \cite[(2.16)]{Bryc-Wesolowski-08} we have \begin{multline*}
  cov(\widetilde Z_t,\widetilde Z_1)=  \frac{ (1-\tilde A C)(1-\tilde B C)(1-\tilde A D)(1-\tilde B D)}{(1-\tilde A\tilde B C D)^2(1-q\tilde A\tilde B CD)}
  (t-CD)(1-\tilde A\tilde B)
  \\
  =\frac{ (1-\tilde A C)(1-\frac{C}{\tilde A u})(1-\tilde A D)(1-\frac{D}{\tilde A u})}{(1-  C D/u)^2(1-q  CD/u)}
  (t-CD)(1-1/u)\\
  =\frac{ (u-1) (\tilde A C-1) (\tilde A D-1) (\tilde A u-C) (\tilde A u-D) (C D-t)}{\tilde A^2 (u-C D)^2 (C D q-u)}
  \end{multline*}}

With the above formulas,  calculation converts the consistency condition \eqref{consistency}   into the following equation:
  $$
  \tfrac{(C+1) (D+1) (q-1) (t-1) (u-1) (\widetilde A C-1) (\widetilde A D-1) (C-\widetilde A u) (D-\widetilde A u)}{\widetilde A (C D-u) (C D q-u)}=0.
  $$
  Since $\widetilde A\geq 0$,  and $-1<D\leq 0$, possible solutions are:
  \begin{enumerate}
    \item $u=1$ (which   is consistent by Theorem \ref{T-semiinfinite});
    \item $\widetilde A= C/u$ which by \eqref{tilde-A-u} holds for $u< C^2$. For $C^2\leq u\leq 1/A^2$ equality $\widetilde A= C/u$  implies  $u=C^2$. As we remarked $u\leq C^2$ gives  a (consistent)
   family of product Bernoulli measures. For $u>1/A^2$ equality $\widetilde A= C/u$ cannot hold as it reduces to $A=C/u$. The latter implies $AC=uA^2>1$ contradicting the assumption $AC<1$.
    \comment{Added details for two cases when $u\geq C^2$.}
    \item $\widetilde A= 1/C$. Since we assume $AC<1$ and we have \eqref{tilde-A-u}, this is possible only when $u=C^2$, a case already covered;
 \item $D=-1$, which is not possible; we already noted that $\kappa_-(\alpha,\gamma)>-1$.
  \end{enumerate}

 \subsection{Proof of Theorem \ref{Thm-semin-u}}\label{Sec:proof-semi-u}
In view of \eqref{QHsolution}, we need only to show that
\begin{multline}
  \label{LimZ} \lim_{N\to\infty}\frac{\EE\left(\prod_{j=1}^K(1+t_j+\sqrt{1-q}\,Z_{t_j}) (1+u+\sqrt{1-q}\,Z_u)^{N-K}\right)}{\EE\left((2+\sqrt{1-q}\,Z_1)^K (1+u+\sqrt{1-q}\,Z_u)^{N-K}\right)}
\\
=\frac{\EE\left(\prod_{j=1}^K(1+t_j+\sqrt{1-q}\,\widetilde{Z}_{t_j})\right)}{\EE\left((2+\sqrt{1-q}\,\widetilde{Z}_1)^K\right)}.
\end{multline}
Throughout the proof, we will use auxiliary processes $(\hat Z_t)$ and $(\hat{\tilde{Z}}_t)$ which will be  the Markov process from \eqref{JW-Z} in Section \ref{AMP},
with parameters $\hat{A}, \hat{B}, \hat{C},\hat{D}$  or $\hat{\tilde{A}}, \hat{\tilde{B}}, \hat{\tilde{C}},\hat{\tilde{D}}$ used in place of parameters $A,B,C,D$.
(The ``hatted" processes are the time inversions
of the processes we want.) Since $u>1$ is fixed, in  notation we suppress the dependence of the parameters on $u$.

\subsubsection*{Step 1: Time inversion}
To facilitate the use of martingale polynomials, the first step is to re-write the left hand side of \eqref{LimZ} in
reverse order. From \eqref{Y-univariate} and \eqref{Y-transitions}
we see that the Markov process $\hat Y_t:=Y_{1/t}$ is also an Askey-Wilson process, with swapped pairs of parameters: $\hat A=C$, $\hat B=D$, $\hat C=A$, $\hat D=B$. Therefore
the time-inversion
\begin{equation}
  \label{Z-inv}
  \hat Z_t:=t Z_{1/t}, \quad t>0
\end{equation}
is also a process  from \eqref{JW-Z} in Section \ref{AMP}, with the above parameters $\hat A,\hat B, \hat C, \hat D$. In particular, it has martingale polynomials that arise from Askey-Wilson polynomials
(with new parameters) just as in \eqref{w2r}.

We now re-write \eqref{LimZ} in terms of $(\hat Z_t)$. Let $\hat u=1/u<1$ and
\begin{equation}
  \label{t2s}
  1/s_j=t_{K+1-j}
\end{equation} so that $\hat u\leq s_1\leq s_2\leq\dots\leq s_k$.
Then
\begin{multline}\label{run-out-of-names}
\prod_{j=1}^K(1+t_j+\sqrt{1-q}\,Z_{t_j})=t_1\dots t_K  \prod_{j=1}^K(1+\tfrac1{t_j}+\sqrt{1-q}\tfrac1{t_j}Z_{t_j})
\\  =t_1\dots t_K  \prod_{j=1}^K(1+\tfrac1{t_j}+\sqrt{1-q}\tfrac1{t_j}Z_{t_j})
=t_1\dots t_K  \prod_{j=1}^K(1+s_j+\sqrt{1-q}s_jZ_{1/s_j})
\\
=t_1\dots t_K  \prod_{j=1}^K(1+s_j+\sqrt{1-q}\hat Z_{s_j}).
\end{multline}
Therefore, to prove  \eqref{LimZ} we need to find a process $(\hat{\tilde Z}_t)$ such that for
 $\hat u\leq s_1\leq s_2\leq\dots\leq s_k$ we have
\begin{multline}
  \label{Lim-hatZ} \lim_{N\to\infty}\frac{\EE\left( (1+\hat u+\sqrt{1-q}\hat Z_{\hat u})^{N-K}\prod_{j=1}^K(1+s_j+\sqrt{1-q}\hat Z_{s_j})\right)}
  {\EE\left((1+\hat u+\sqrt{1-q}\hat Z_{\hat u})^{N-K}(2+\sqrt{1-q}\hat Z_1)^K\right)}
\\
 =\frac{\EE\left(\prod_{j=1}^K(1+s_j+\sqrt{1-q}\,\hat{\tilde{Z}}_{s_j})\right)}{\EE\left((2+\sqrt{1-q}\,\hat{\tilde{Z}}_1)^K\right)}.
\end{multline}

To prove \eqref{Lim-hatZ}, we will find a Markov process $(\hat{\tilde Z}_t)$ such that
\begin{multline}
  \label{Lim-hatZ+} \lim_{N\to\infty}\frac{\EE\left( (1+\hat u+\sqrt{1-q}\hat Z_{\hat u})^{N-K}\prod_{j=1}^K(1+s_j+\sqrt{1-q}\hat Z_{s_j})\right)}
  {\EE\left((1+\hat u+\sqrt{1-q}\hat Z_{\hat u})^N\right)}
\\
=\frac{\EE\left(\prod_{j=1}^K(1+s_j+\sqrt{1-q}\,\hat{\tilde{Z}}_{s_j})\right)}{\EE\left((1+\hat u+\sqrt{1-q}\,\hat{\tilde{Z}}_{\hat u})^K\right)}.
\end{multline}
This will end the proof, as we can write the left hand side of \eqref{Lim-hatZ} as the quotient of two such limits, one for
$s_1\leq s_2\leq\dots\leq s_k$ and the second one for
$s_1=s_2=\dots=s_K=1$.

\comment{Added clarification}
We have thus reduced the proof of \eqref{LimZ} to the proof of \eqref{Lim-hatZ+}.
The next step is to compute the limit on the right hand side of \eqref{Lim-hatZ+}.

\subsubsection*{Step 2: Calculating  the limit \eqref{Lim-hatZ+}}
Recall that process $(\hat Z_s)$ has martingale polynomials $\{\hat r_n(x;s)\}$. Any polynomial in variable $\hat Z_s$ can be written as a linear combination of the martingale polynomials $\{\hat r_n(x;s)\}$.
Therefore, conditional expectation of any polynomial in $\hat Z_s$  with respect to past $\sigma$-field generated by  $\{\hat Z_{v}:v\leq s'\}$  for $s'<s$, is
a polynomial of the same degree in the variable
 $\hat Z_{s'}$, as it is  the same linear combination of the polynomials  $\{\hat r_n(x,s')\}$.
Using this property recursively with $s=s_j$ and   $s'=s_{j-1}$ we see that $ \EE( \prod_{j=1}^K(1+s_j+\sqrt{1-q}\hat Z_{s_j})|\hat Z_{\hat u})$
is a polynomial of degree $K$ in $\hat Z_{\hat u}$. After a linear change of the variable, we see that
there exists a   polynomial $p_K$ of degree $K$ such that
\begin{equation}
  \label{Eq:def-p_K}
  \EE\large( \prod_{j=1}^K(1+s_j+\sqrt{1-q}\hat Z_{s_j})\large|\hat Z_{\hat u}\large)=p_K(1+\hat u+\sqrt{1-q}\hat Z_{\hat u}).
\end{equation}
Thus, with $Z=(1+\hat u+\sqrt{1-q}\hat Z_{\hat u})$ we can write the numerator on the left hand side of \eqref{Lim-hatZ+} as follows.
$$
  \EE\left( Z^{N-K}\prod_{j=1}^K(1+s_j+\sqrt{1-q}\hat Z_{s_j})\right)=
\EE\left(p_K(Z)Z^{N-K}\right). $$

This representation allows us to compute the limit on the left hand side of \eqref{Lim-hatZ+} as follows.
\begin{lemma}
  \label{Lem-int} If $Z\geq 0$ is a bounded non-zero random variable  and $p$ is a polynomial then  $$\lim_{n\to\infty}\frac{\EE(p(Z)Z^{n})}{\EE(Z^n)}=p(\|Z\|_\infty).$$
\end{lemma}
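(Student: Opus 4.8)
The statement is that for a bounded nonnegative nonzero random variable $Z$ with $\|Z\|_\infty=M$ and a polynomial $p$, we have $\lim_{n\to\infty}\EE(p(Z)Z^n)/\EE(Z^n)=p(M)$. By linearity in $p$ it suffices to treat monomials $p(z)=z^k$, so the claim reduces to showing $\EE(Z^{n+k})/\EE(Z^n)\to M^k$ for each fixed $k\ge 0$; and since $M^k$ is a constant, it is in fact enough to prove the single statement $\EE(Z^{n+1})/\EE(Z^n)\to M$, because then $\EE(Z^{n+k})/\EE(Z^n)=\prod_{i=0}^{k-1}\EE(Z^{n+i+1})/\EE(Z^{n+i})$ is a product of $k$ factors each tending to $M$. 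After rescaling by $M$ (possible since $M>0$ as $Z$ is nonzero and nonnegative) we may assume $\|Z\|_\infty=1$, and the goal becomes $\EE(Z^{n+1})/\EE(Z^n)\to 1$, equivalently $\EE(Z^n(1-Z))/\EE(Z^n)\to 0$.

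First I would note $\EE(Z^n)>0$ for every $n$: since $0\le Z\le 1$ and $Z$ is not a.s.\ zero, $\EE(Z^n)\ge \EE(Z^n\mathbf 1_{Z\ge 1/2})\ge 2^{-n}\Pr(Z\ge 1/2)>0$ for $n$ large (and by positivity for all $n$). Fix $\eps\in(0,1)$. Split the numerator $\EE(Z^n(1-Z))$ over $\{Z>1-\eps\}$ and $\{Z\le 1-\eps\}$. On the first set $1-Z<\eps$, so that contribution is at most $\eps\,\EE(Z^n)$. On the second set $Z\le 1-\eps$, so $Z^n(1-Z)\le (1-\eps)^n$, giving a contribution at most $(1-\eps)^n$. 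Hence
\begin{equation}\label{eq:split-bound}
0\le \frac{\EE\big(Z^n(1-Z)\big)}{\EE(Z^n)}\le \eps+\frac{(1-\eps)^n}{\EE(Z^n)}.
\end{equation}
It remains to control the denominator from below. Because $\|Z\|_\infty=1$, for every $\eps>0$ we have $\Pr(Z>1-\eps)>0$; set $c_\eps:=\Pr\big(Z>1-\tfrac\eps2\big)>0$. Then $\EE(Z^n)\ge \EE\big(Z^n\mathbf 1_{Z>1-\eps/2}\big)\ge (1-\tfrac\eps2)^n c_\eps$, so $(1-\eps)^n/\EE(Z^n)\le c_\eps^{-1}\big(\tfrac{1-\eps}{1-\eps/2}\big)^n\to 0$ since $\tfrac{1-\eps}{1-\eps/2}<1$. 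Plugging into \eqref{eq:split-bound}, $\limsup_n \EE(Z^n(1-Z))/\EE(Z^n)\le\eps$, and letting $\eps\downarrow0$ finishes it.

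**Main obstacle.** There is no serious obstacle here — this is an elementary Laplace-type/Watson's-lemma argument — but the one point requiring a little care is the lower bound on $\EE(Z^n)$, which is exactly where the hypothesis $\|Z\|_\infty=1$ (rather than merely $Z\le 1$) is used: one needs the essential supremum to be attained in the sense that arbitrarily small neighborhoods below $1$ carry positive mass. The rescaling step also quietly uses $M>0$, which is why the nonzero assumption on $Z$ is needed; if $Z\equiv 0$ the ratio is $0/0$ and the statement is vacuous or interpreted separately.
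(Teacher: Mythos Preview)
Your proof is correct and follows essentially the same approach as the paper's: reduce to monomials by linearity, obtain the upper bound trivially from $Z\le \|Z\|_\infty$, and for the lower bound split at $\{Z>\|Z\|_\infty-\eps\}$ and use a lower bound of the form $\EE(Z^n)\ge c_\eps(\|Z\|_\infty-\eps/2)^n$. The only cosmetic differences are that you telescope down to the case $k=1$ and rescale to $\|Z\|_\infty=1$, whereas the paper handles general $k$ directly and invokes $\sqrt[n]{\EE(Z^n)}\to \|Z\|_\infty$ for the denominator bound; the underlying estimate is the same.
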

\begin{proof}%
By linearity, we compute the limit for the  monomials $p(Z)=Z^k$. Denote    $z:=\|Z\|_\infty >0$. We want to show that $\lim_{n\to\infty}\frac{\EE(Z^{n+k})}{\EE(Z^n)}=z^k$.
Since $Z^{n+k}\leq z^k Z^n$, $$\limsup_{n\to\infty}\frac{\EE(Z^{n+k})}{\EE(Z^n)}\leq z^k\limsup_{n\to\infty}\frac{\EE(Z^{n})}{\EE(Z^n)} =z^k.$$

On the other hand, for  $\eps\in(0,z)$ we have
\begin{multline*}
   \frac{\EE(Z^{n+k})}{\EE(Z^n)}\geq \frac{\EE(Z^{n+k}I_{Z>z-\eps})}{\EE(Z^n)}\geq \frac{(z-\eps)^k\EE(Z^{n}I_{Z>z-\eps})}{\EE(Z^{n})}
\\ =  (z-\eps)^k  -(z-\eps)^k \frac{\EE(Z^{n}I_{Z\leq z-\eps})}{\EE(Z^{n})}.
 \end{multline*}
 It remains to notice that since   $\lim_{n\to\infty}\sqrt[n]{\EE(Z^{n})}= z$,  for all large enough $n$ we have $\EE(Z^{n})>(z-\eps/2)^n$. So
 $$
\frac{\EE(Z^{n}I_{Z\leq z-\eps})}{\EE(Z^{n})}\leq \frac{(z-\eps)^n}{(z-\eps/2)^n}\to0.
 $$
 Thus $\liminf_{n\to\infty}\frac{\EE(Z^{n+k})}{\EE(Z^n)}\geq (z-\eps)^k$, and since $\eps>0$ was arbitrary,
 this ends the proof of Lemma \ref{Lem-int}.
\end{proof}

We now return to Step 2 of the proof of Theorem \ref{Thm-semin-u}. We apply Lemma \ref{Lem-int} with $Z=1+\hat u +\sqrt{1-q}\hat Z_{\hat u}$ and $n=N-K\to\infty$. We  note that,
 as in the proof of Theorem \ref{T-LDP}, from \eqref{ell(t)} we  get
 $1+\hat u +\sqrt{1-q}\hat Z_{\hat u}\geq0$.
 Next, we  compute $\zeta=\|1+\hat u +\sqrt{1-q}\hat Z_{\hat u}\|_\infty=1+\hat u+ \sqrt{1-q}\,\hat z$,
where $\hat z$ is the maximum of the support of  $\hat Z_{\hat u}$.
The calculation  is based on \eqref{u(t)}, and we get
\begin{equation}
  \label{hat-z}
  \sqrt{1-q}\hat z=U(\hat u)=\begin{cases}
    \hat C+\hat u/\hat C &\mbox{ if $\hat u<\hat C^2$},\\
    2 \sqrt{\hat u} & \mbox{ if $\hat C^2\leq \hat u\leq 1/\hat A^2$},\\
    \hat A\hat u+1/\hat A &\mbox{ if $\hat u>1/\hat A^2$}.
  \end{cases}
\end{equation}

Using \eqref{Eq:def-p_K} and Lemma \ref{Lem-int}    we conclude that there exists a unique polynomial $p_K$ of degree $K$
with coefficients determined solely by $s_1,\dots,s_K$ and parameters $\hat A,\hat B,\hat C,\hat D$ such that
\begin{equation}
  \label{Lim-hatZ-computed} \lim_{N\to\infty}\frac{\EE\left( (1+\hat u+\sqrt{1-q}\hat Z_{\hat u})^{N-K}\prod_{j=1}^K(1+s_j+\sqrt{1-q}\hat Z_{s_j})\right)}
  {\EE\left((1+\hat u+\sqrt{1-q}\hat Z_{\hat u})^N\right)}
   = \frac{p_K(\zeta)}{\zeta^K},
\end{equation}
where
\begin{equation}\label{zeta}
 \zeta=1+\hat u+\sqrt{1-q}\hat z =\begin{cases}

\frac{(C+1) (C+u)}{C u} & \mbox{if $u<C^2$ }, \\
  (1+1/\sqrt{u})^2 & \mbox{if $C^2\leq u\leq 1/A^2$ }, \\

 \frac{(A+1) (A u+1)}{A u} & \mbox{if $u>1/A^2$ } .\end{cases}
\end{equation}

\comment{added "transition"}
Now that we determined the limit on the right hand side of \eqref{Lim-hatZ+}, our next step is to represent it in terms of a Markov process.
\subsubsection*{Step 3: Process representation of the limit \eqref{Lim-hatZ+}} We now introduce the process that
 appears on the right hand side of \eqref{Lim-hatZ+}.
Let $(\hat{\tilde Z}_t)_{t\geq \hat u}$ be a Markov process with the same transition probabilities as process $(\hat Z_t)$,
but started at the deterministic point
$\hat{\tilde Z}_{\hat u}=\hat z$.
That is,
\begin{itemize}
  \item[(i)] $(\hat{\tilde Z}_t)_{t\geq \hat u}$ arises from an auxiliary process $(\hat{\tilde Y}_t)_{t\geq \hat u}$ via
  (``hatted and tilded") formula \eqref{JW-Z};
  \item [(ii)] $(\hat{\tilde Y}_t)_{t\geq \hat u}$ is a Markov process started at the deterministic point
  $$\hat{\tilde
      Y}_{\hat u}=\frac{\sqrt{1-q}\hat z}{2\sqrt{\hat u}};$$
  \item[(iii)] for $\hat u\leq s<t$ the transition probabilities of $(\hat{\tilde Y}_t)_{t\geq \hat u}$
  are given by formula \eqref{Y-transitions} with $A,B$ replaced by $\hat A$ and $\hat B$;
\item[(iv)] the univariate distributions of $(\hat{\tilde Y}_t)_{t\geq \hat u}$ are given by
formula \eqref{Y-transitions} with $s=\hat u$,  $x=\sqrt{1-q}\hat z/(2\sqrt{\hat u})$ and with $A,B$ replaced by $\hat A$ and $\hat B$.
\end{itemize}
 From this description, it is clear that $(\hat{\tilde Y}_t)_{t\geq \hat u}$ is an Askey-Wilson process with parameters
$\hat{\tilde{A}}, \hat{\tilde{B}}, \hat{\tilde{C}},\hat{\tilde{D}}$, which can be expressed in terms of the original parameters $A,B,C,D$ as follows:

\begin{eqnarray}
\hat{\tilde{A}}&=&\hat A=C, \nonumber
\\
\hat{\tilde{B}}&=&\hat B=D,  \nonumber
\\
 \hat{\tilde{C}}&=&
\sqrt{\hat u} (x+\sqrt{x^2-1})=\begin{cases}
C/u & \mbox{if } C^2>u ,\\

1/\sqrt{u} & \mbox{if } C^2\leq u\leq 1/A^2, \\
A & \mbox{if } u>1/A^2,
  \end{cases} \label{(5.11)}
\\
\hat{\tilde{D}}%
&=&\sqrt{\hat u} (x-\sqrt{x^2-1})=\hat u/\hat{\tilde{C}}.   \nonumber
\end{eqnarray}
(The calculations here are based on \eqref{hat-z}.)
In particular, since $\hat{\tilde{C}}\hat{\tilde{D}}=\hat u$ and  $\hat{\tilde{A}}\hat{\tilde{B}}\leq 0$, from \eqref{EQ:I}
we read out that  the process $(\hat{\tilde Y}_t)_{t\in I}$ is indeed  defined on the
time-interval $I=[\hat u,\infty)$, as expected.

Since process $(\hat{\widetilde Z}_t)_{t\geq \hat u}$ has the same transition probabilities  as process $(\hat Z_t)$, both processes have the same
  martingale polynomials. Therefore,
by the same algebra as before,    formula \eqref{Eq:def-p_K} takes the following form

\begin{equation}
  \label{Eq:def-p_K+}
  \EE( \prod_{j=1}^K(1+s_j+\sqrt{1-q}\hat{\tilde{Z}}_{s_j})|\hat Z_{\hat u})=p_K(1+\hat u+\sqrt{1-q}\hat{\tilde{Z}}_{\hat u}).
\end{equation}
However, $\hat{\tilde{Z}}_{\hat u}=\hat z$ is non-random, so by integrating \eqref{Eq:def-p_K+} with respect to the degenerate law,  the right hand side of
\eqref{Lim-hatZ-computed} can be written as
$$\EE\left( \prod_{j=1}^K(1+s_j+\sqrt{1-q}\hat{\tilde{Z}}_{s_j})\right)/\zeta^K$$
and \eqref{Lim-hatZ+} follows, as $\zeta^K=(1+\hat u+\sqrt{1-q}\hat z)^K=\EE(1+\hat u+\sqrt{1-q}\hat{\widetilde Z}_{\hat u})^K$.

\subsubsection*{Step 4: time inversion again}
To conclude the proof, we rewrite the right hand side of \eqref{Lim-hatZ} by the same time inversion that we already used in Step 1 of the proof. For $t=1/s\leq 1$ define
$$\widetilde Z_t=t\hat{\tilde{Z}}_{1/t}.$$
As previously, $\widetilde Z_u= \frac{1}{\hat u}\hat{\tilde{Z}}_{\hat u}$, compare \eqref{Z-inv}.  Using the correspondence \eqref{t2s} again, the numerator of the fraction on the right hand side of \eqref{Lim-hatZ} becomes
\begin{multline*}
 \EE\left(\prod_{j=1}^K(1+s_j+\sqrt{1-q}\,\hat{\tilde{Z}}_{s_j})\right) =
 \EE\left(\prod_{j=1}^K(1+1/t_j+\sqrt{1-q} \hat{\tilde{Z}}_{1/t_j})\right)
 \\
 =
\frac{1}{t_1\dots t_K} \EE\left(\prod_{j=1}^K(1+ t_j+\sqrt{1-q} t_j \hat{\tilde{Z}}_{1/t_j})\right)
\\
=\frac{1}{t_1\dots t_K} \EE\left(\prod_{j=1}^K(1+ t_j+\sqrt{1-q} \widetilde{Z}_{t_j})\right).
\end{multline*}
We use the last expression to replace the right hand side of \eqref{Lim-hatZ} and  we use \eqref{run-out-of-names} to replace the left  hand side of \eqref{Lim-hatZ}.
The products $t_1\dots t_K$ cancel out, and we get \eqref{LimZ}.

By the previous discussion about the time inversion, process
  $(\widetilde {Z}_t)_{0\leq t\leq u}$ is  the Markov process from \eqref{JW-Z} in Section \ref{AMP},
with parameters
$\widetilde{A}, \widetilde{B}, \widetilde{C},\widetilde{D}$ obtained by swapping the two pairs of the parameters for the process $(\hat{\tilde{Z}})$,
$$\widetilde{A}=\hat{\tilde{C}}, \widetilde{B}=\hat{\tilde{D}}, \widetilde{C}=\hat{\tilde{A}},\widetilde{D}=\hat{\tilde{B}}.$$
In particular, from \eqref{(5.11)} we get  \eqref{tilde-A-u} and \eqref{tilde-B-u}.

This concludes the proof of Theorem \ref{Thm-semin-u}.
\subsection{Limits as $K\to\infty$ or as $u\to\infty$}

Recall measures $\mu_{K,u}$ from the conclusion of Theorem \ref{Thm-semin-u}.
A natural plan to prove Large Deviations for these measures would be to compute the limit
\begin{equation}   \label{La-u}
\Lambda(\la):=\lim_{K\to\infty}\frac1{K}\log \int  \exp(\la\sum_{j=1}^K\tau_j)d\mu_{K,u}
\end{equation}
for all real $\la$. Unfortunately, large $\la$ are beyond the scope of our methods, so we only can state the result for small $\la$.
\begin{proposition}\label{P-LDP-u}
Suppose that the parameters of ASEP are such that $AC<1$   with
$A,B,C,D$ defined by \eqref{ABCD}, and fix $u\geq 1$. Then for $-\infty <\la\leq \log u$ the  limit \eqref{La-u} exists,
 $\Lambda(\la)$ does not depend on $u$ and is given by $\Lambda(\la)=\LL(\la)-\LL(0)$, where $\LL(\la)$ is defined in \eqref{L0-ans}.
\end{proposition}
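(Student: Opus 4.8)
The plan is to specialize the generating-function identity \eqref{u-Z} of Theorem \ref{Thm-semin-u} to $t_1=\dots=t_K=e^\la$. Since $\la\le\log u$ is exactly the condition $e^\la\le u$, formula \eqref{u-Z} applies and yields
$$
\int\exp\Big(\la\sum_{j=1}^K\tau_j\Big)\,d\mu_{K,u}=\frac{\EE\big((1+e^\la+\sqrt{1-q}\,\widetilde Z_{e^\la})^K\big)}{\EE\big((2+\sqrt{1-q}\,\widetilde Z_1)^K\big)}\,,
$$
so $\Lambda(\la)$ is the difference of two limits of the form $\tfrac1K\log\EE(W^K)$, with $W=1+e^\la+\sqrt{1-q}\,\widetilde Z_{e^\la}$ and $W=2+\sqrt{1-q}\,\widetilde Z_1$. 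I would then invoke Lemma \ref{Lem-int}: once $W$ is bounded, nonzero and $W\ge0$, that limit equals $\log\|W\|_\infty$. Non-negativity of $1+t+\sqrt{1-q}\,\widetilde Z_t$ for $0<t\le u$ follows from the same computation as in the proof of Theorem \ref{T-LDP}: the lower edge $\widetilde\ell(t)$ of the support of $\sqrt{1-q}\,\widetilde Z_t$ satisfies $1+t+\widetilde\ell(t)\ge(1-\sqrt t)^2\ge0$, using that $\widetilde D_u=D\in(-1,0]$ is unchanged and that, since here $\widetilde B_u\ge0$, the only linear branch of $\widetilde\ell$ comes from the $D$-atoms (the generic lower edge being $-2\sqrt t$), so dropping the $Bt+1/B$ branch present in \eqref{ell(t)} only raises the edge.

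The core step is to identify $\|1+t+\sqrt{1-q}\,\widetilde Z_t\|_\infty=1+t+\widetilde U(t)$, where $\widetilde U(t)$ is the upper edge of the support of $\sqrt{1-q}\,\widetilde Z_t$, read off from the atom locations \eqref{Yatoms} for the parameters $\widetilde A_u,\widetilde B_u,\widetilde C_u=C,\widetilde D_u=D$ exactly as $U(t)$ in \eqref{u(t)} was obtained from $A,B,C,D$, and to show that $\widetilde U(t)=U(t)$ for $0<t\le u$. I would check this in the three regimes of \eqref{tilde-A-u}--\eqref{tilde-B-u}. When $u<C^2$ the process is deterministic by Remark \ref{Rem-Prod}, with $\sqrt{1-q}\,\widetilde Z_t=t/C+C$, which equals $U(t)$ because $t\le u<C^2$. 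When $C^2\le u\le 1/A^2$ we have $\widetilde A_u=\widetilde B_u=u^{-1/2}$, so the atoms generated by $\widetilde A_u$ and $\widetilde B_u$ would require $t\ge u$ and hence do not appear for $t<u$ (and at $t=u$ the largest such atom is $u^{-1/2}u+u^{1/2}=2\sqrt u$, which coincides with the continuous edge); only the $C$-atom (for $t\le C^2$) and the band $[-2\sqrt t,2\sqrt t]$ survive, giving $\widetilde U(t)=C+t/C$ for $t\le C^2$ and $2\sqrt t$ for $C^2<t\le u$, matching $U(t)$. When $u>1/A^2$ we have $\widetilde A_u=A$ and $\widetilde B_u=1/(Au)$: the $\widetilde B_u$-atoms require $t\ge A^2u^2>u$ (as $A^2u>1$) and never occur, the $\widetilde A_u$-atoms reproduce exactly the branch $At+1/A$ for $t\ge 1/A^2$ of \eqref{u(t)}, and the $C$-atom and the band are unchanged, so again $\widetilde U(t)=U(t)$ for $t\le u$. (The endpoint cases $u=C^2$ and $u=1/A^2$ are handled directly from the Askey--Wilson description or by continuity.)

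Combining these, $\Lambda(\la)=\log\big(1+e^\la+U(e^\la)\big)-\log\big(2+U(1)\big)$ for $\la\le\log u$, and a direct comparison of \eqref{u(t)} with \eqref{L0-ans} shows $\log\big(1+t+U(t)\big)\big|_{t=e^\la}=\L(\la)$; hence $\Lambda(\la)=\L(\la)-\L(0)$. Since $U$, and therefore $\L$, depends only on $A$, $C$ and $\la$ but not on $u$, the limit does not depend on $u$. I expect the main obstacle to be precisely the middle step: the support of $\widetilde Z_t$ genuinely differs in structure from that of $Z_t$ because $\widetilde B_u\ge0$ whereas $B\le0$, so one must verify case by case that the upper edge nevertheless agrees with $U(t)$ on the admissible range $t\le u$, together with dealing with the degenerate regime $u<C^2$ through Remark \ref{Rem-Prod}.
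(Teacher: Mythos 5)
Your proposal is correct and follows essentially the same route as the paper: reduce via \eqref{u-Z} to computing $\log\|1+e^\la+\sqrt{1-q}\,\widetilde Z_{e^\la}\|_\infty$, check non-negativity as in Theorem \ref{T-LDP} (noting $\widetilde B_u\ge 0$ removes the $B$-branch of the lower edge), and verify by a case analysis of the atoms that the upper edge of the support of $\sqrt{1-q}\,\widetilde Z_t$ coincides with $U(t)$ from \eqref{u(t)} for all $t\le u$. The only cosmetic difference is organizational: you split into the three regimes of $u$ from \eqref{tilde-A-u}--\eqref{tilde-B-u} (using Remark \ref{Rem-Prod} for $u<C^2$), whereas the paper enumerates the three candidate atom families and rules out the $\widetilde B_u$-atoms and, for $t\le u$, dominates the $\widetilde A_u$-atom by the $C$-atom when $u<C^2$; both yield the same conclusion.
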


\begin{proof}%
  The proof is based on \eqref{u-Z}.
As in  the proof of Theorem \ref{T-LDP}, we first confirm that $1+t+\sqrt{1-q}\widetilde Z_t \geq 0$.
(With $\widetilde B_u>0$ by \eqref{tilde-B-u}, here
 we have fewer atoms contributing to $\ell(u)$.)

Next, we compute $\|1+t+\sqrt{1-q}\widetilde Z_t\|_\infty$.
Since $\widetilde B_u>0$, we need to consider three possibilities for the atoms:

\begin{enumerate}
  \item \label{I} Atom at $I_1(t):=\left(\sqrt{t}\widetilde B_u+1/(\sqrt{t}\widetilde B_u)\right)/2$ for $t>1/\widetilde B_u^2$;
  \item  \label{II} Atom at $I_2(t):=\left(\sqrt{t}\widetilde A_u+1/(\sqrt{t}\widetilde A_u)\right)/2$ for $t>1/\widetilde A_u^2$;
\item \label{III} Atom at $I_3(t):=\left(\widetilde C/\sqrt{t}+\sqrt{t}/\widetilde C\right)/2$ for $t<\widetilde C^2_u$.
\end{enumerate}
From  \eqref{tilde-B-u} we read out that since $t\leq u$, case \eqref{I} never occurs. Indeed,
$$
t>1/\widetilde B_u^2=\begin{cases}
   C^2 &\mbox{only when $u<C^2$, so $t>u$, contradicting $t\leq u$},\\
   u & \mbox{contradicting $t\leq u$},\\
   A^2u^2 &\mbox{when $u>\tfrac1{A^2}$, but $u>t>A^2u^2$  implies that $u<\tfrac1{A^2}$}.
\end{cases}
$$

Atom \eqref{II} can contribute only when $u<C^2$ or when $u>1/A^2$.
However, only the second case contributes to the maximum.
This is because when $u<C^2$ both atoms \eqref{II} and \eqref{III} occur for $u^2/C^2<t<u$, but atom \eqref{III} is  then larger.
To verify the latter, we compute
$$
\frac{d}{dt}(I_2(t)-I_3(t))=\frac{(C^2-u)(t+u)}{2 C u t \sqrt{t}}>0.
$$
Since $I_2(u)=I_3(u)$ and $t\leq u$,  this shows that $I_2(t)\leq I_3(t)$, i.e.,   \eqref{III} is the larger atom.

To summarize, we have the following two cases. Either $t<C^2$ and then the atom is at $(C/\sqrt{t}+\sqrt{t}/C)/2$, or $t>1/A^2$ and
then necessarily $u>1/A^2$ with the atom
at $(A\sqrt{t}+1/(A\sqrt{t}))/2$.  Thus for $t=e^\la\leq u$ and $u\geq 1$ we see that the limit \eqref{La-u} exists
and  $\Lambda(\la)=\LL(\la)-\LL(0)$ is given by \eqref{L0-ans} as claimed.
\arxiv{ Indeed, if $t<C^2$ and  $u<C^2$ then atom from case \eqref{III} dominates and if $u>C^2$ then  only atom \eqref{III} contributes.
So $\|1+t+\sqrt{1-q}\widetilde Z_t\|_\infty=(C+t)(1+C)/C$, see \eqref{u(t)}.

On the other hand, if $t>1/A^2$ then $u>t>1/A^2$ so atom \eqref{II} contributes and $\|1+t+\sqrt{1-q}\widetilde Z_t\|_\infty=(1+At)(1+A)/A$, see \eqref{tilde-A-u} and \eqref{u(t)}.

There is no atom when $C^2\leq t \leq 1/A^2$, so $\|1+t+\sqrt{1-q}\widetilde Z_t\|_\infty=(1+\sqrt{t})^2$.
}

\end{proof}
\begin{remark}
 \citet{duhart2014semi}  use a
matrix representation \cite[Theorem
3.1]{grosskinsky2004phase} to prove the large deviations principle for the stationary measure of an ASEP with $q=0$.
In particular, in \cite[Corollary 6.1]{duhart2014semi} they show that  when $u=1$ and $q=0$ the
limit  \eqref{La-u} exists for all  $\la\in\RR$. When $\la>0$ and $C>1$,  their answer   is still given by
\eqref{L0-ans}, but with $A$ replaced by $\widetilde A_1$ as defined in \eqref{tilde-A-u}.
This suggests that perhaps the limit  \eqref{La-u} exists for all $\la\in\RR$ but depends on $u$ when $\la>\log u$.
\end{remark}
Our final result deals with $u\to\infty$.
\begin{proposition}\label{P-u-infty}
   As $u\to\infty$, measures $\mu_{K,u}$ converge weakly to a measure $\mu_{K,\infty}$ such that
     \begin{equation}\label{infty}
\int \prod_{j=1}^K t_j^{\tau j}d\mu_{K,\infty}= \frac{\EE\left(\prod_{j=1}^K(1+t_j+\sqrt{1-q}\,\widetilde{Z}_{t_j})\right)}{\EE\left((2+\sqrt{1-q}\,\widetilde{Z}_1)^K\right)},
\end{equation}
where  $(\widetilde {Z}_t)_{0\leq t<\infty}$ is  the Markov process from \eqref{JW-Z} in Section \ref{AMP} constructed using
parameters $\widetilde A_\infty=A$, $\widetilde B_\infty=0$, $\widetilde C_\infty=C$, $\widetilde D_\infty=D$.
\end{proposition}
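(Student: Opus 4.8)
The plan is to reduce the asserted weak convergence --- which, on the finite set $\{0,1\}^K$, amounts to convergence of each point mass --- to convergence of the generating functions in \eqref{u-Z}, and then to deduce the latter from continuity of the joint moments of the Askey--Wilson--type process in its defining parameters $A,B,C,D$. First I would fix $0<t_1<t_2<\dots<t_K<1$ and take $u$ large, so that $t_K\le u$ and, by \eqref{tilde-A-u}--\eqref{tilde-B-u}, either $\widetilde A_u=A$ and $\widetilde B_u=1/(Au)$ when $A>0$, or $\widetilde A_u=\widetilde B_u=u^{-1/2}$ when $A=0$; in either case $(\widetilde A_u,\widetilde B_u)\to(A,0)$ while $(\widetilde C_u,\widetilde D_u)=(C,D)$ is frozen. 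Theorem \ref{Thm-semin-u} then gives
\[
\int_{\{0,1\}^K}\prod_{j=1}^K t_j^{\tau_j}\,d\mu_{K,u}
=\frac{\EE\big(\prod_{j=1}^K(1+t_j+\sqrt{1-q}\,\widetilde Z^{(u)}_{t_j})\big)}{\EE\big((2+\sqrt{1-q}\,\widetilde Z^{(u)}_1)^K\big)},
\]
where $(\widetilde Z^{(u)}_t)$ is the process from \eqref{JW-Z} with parameters $(\widetilde A_u,\widetilde B_u,C,D)$. A short check shows that all these tuples, and also the limiting tuple $(A,0,C,D)$, obey \eqref{ABCD-oryginal}: for instance $\widetilde A_u\widetilde C_u=AC<1$ by hypothesis, $\widetilde B_u\widetilde C_u=C/(Au)<1$ because $u>1/A^2$ forces $Au>1/A>C$, while $\widetilde A_u\widetilde D_u$ and $\widetilde B_u\widetilde D_u$ are $\le 0$; in particular the limiting process $(\widetilde Z_t)_{0\le t<\infty}$ of the statement is well defined, on $I=[0,\infty)$, because $\widetilde C_\infty\widetilde D_\infty=CD\le 0$ and $\widetilde A_\infty\widetilde B_\infty=0$.

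The heart of the matter is continuity of the moments in the parameters, and for this I would re-use the matrix computation from the proof of Theorem \ref{T-rep}. That telescoping argument rests only on orthogonality of $\{r_n(\cdot;t)\}$, the three-step recurrence \eqref{three-step}, and the martingale property, so it applies to \emph{any} process from \eqref{JW-Z} and yields, for ordered times in the time interval,
\[
\EE\Big(\prod_{j=1}^K\big(1+t_j+\sqrt{1-q}\,Z_{t_j}\big)\Big)
=\langle W|\,\vec{\prod}_{j=1}^K\big((1+t_j)\mI+\sqrt{1-q}\,(t_j\mx+\my)\big)\,|V\rangle,
\]
with $\langle W|=|V\rangle=[1,0,0,\dots]$ and $\mx,\my$ the Jacobi matrices \eqref{ini-cond} of the process. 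Since $\langle W|$ and $|V\rangle$ are the first coordinate vector, the right-hand side involves only the top-left $(K+1)\times(K+1)$ corner of $\mx,\my$, whose entries are the explicit rational functions of $(A,B,C,D)$ recorded in \cite[page~1243]{Bryc-Wesolowski-08} (e.g.\ $\alpha_0=-AB\beta_0$, $\varphi_1=-CD\eps_1$, and $\gamma_0,\delta_0$ as in \eqref{gamma0}), with denominators that do not vanish on the region \eqref{ABCD-oryginal}. Applying this with parameters $(\widetilde A_u,\widetilde B_u,C,D)$ and letting $u\to\infty$, the relevant finite corner converges entrywise to the one for $(A,0,C,D)$, so both the numerator and the denominator of the displayed quotient converge to the corresponding quantities for $(\widetilde Z_t)_{0\le t<\infty}$. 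The limiting denominator $\EE\big((2+\sqrt{1-q}\,\widetilde Z_1)^K\big)$ is strictly positive: as in the proof of Theorem \ref{T-LDP} one has $2+\sqrt{1-q}\,\widetilde Z_1\ge 0$, and since the law of $\widetilde Z_1$ has a genuine absolutely continuous part it is not a.s.\ zero. Hence for each fixed $0<t_1<\dots<t_K<1$ the right-hand side of \eqref{u-Z} converges to the right-hand side of \eqref{infty}.

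Finally I would pass from this pointwise convergence to weak convergence. Both sides of \eqref{u-Z} and \eqref{infty} are multilinear in $(t_1,\dots,t_K)$ (degree at most one in each variable), so convergence on the open box $\{0<t_1<\dots<t_K<1\}$ forces convergence of all $2^K$ coefficients, i.e.\ $\mu_{K,u}(\{\tau\})\to\mu_{K,\infty}(\{\tau\})$ for every $\tau\in\{0,1\}^K$, where $\mu_{K,\infty}$ is defined to be the finitely supported measure with generating function \eqref{infty}. The limiting masses are nonnegative and sum to $1$ (being limits of $\mu_{K,u}(\{\tau\})$), so $\mu_{K,\infty}$ is a probability measure, and convergence of the point masses on the finite space $\{0,1\}^K$ is precisely weak convergence. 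The main obstacle, and the step deserving genuine care, is the parameter--continuity of the moments: one has to make sure the tuples $(\widetilde A_u,\widetilde B_u,C,D)$ never leave the admissible region \eqref{ABCD-oryginal} as $u\to\infty$, so that the process remains well defined and the Jacobi entries stay finite --- this is exactly the inequality check carried out above. Everything else is routine algebra with the matrix identity.
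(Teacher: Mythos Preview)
Your proof is correct and follows essentially the same strategy as the paper: both argue that the joint moments of $\widetilde Z^{(u)}$ depend continuously on the parameters $(\widetilde A_u,\widetilde B_u,C,D)$, which converge to $(A,0,C,D)$ as $u\to\infty$, so the generating functions \eqref{u-Z} converge to \eqref{infty} and weak convergence follows. The paper's proof is a three-line sketch merely asserting this continuity; you supply the mechanism (the matrix identity from the proof of Theorem~\ref{T-rep} reduces each moment to a polynomial in finitely many Jacobi-matrix entries, which are explicit rational functions of the parameters with nonvanishing denominators) together with the bookkeeping the paper omits---admissibility of the parameter tuples along the way and at the limit, positivity of the limiting denominator, and the passage from generating-function convergence on the ordered simplex to convergence of all $2^K$ point masses via multilinearity.
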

(Recall that $CD\leq 0$ by \eqref{kappa}.)
\begin{proof}
Let  $(\widetilde Z_t^{(u)})_{0\leq t\leq u}$ denote the Markov process from Theorem \ref{Thm-semin-u}, with its dependence on $u$ written explicitly.
Since the joint moments of $\widetilde Z_t^{(u)}$ depend continuously on  the parameters \eqref{tilde-A-u} and \eqref{tilde-B-u} which converge as $u\to\infty$,
    it is clear that the probability generating functions %
   \eqref{u-Z}
    converge as $u\to\infty$. Thus
  $\mu_{K,u}\to \mu_{K,\infty}$ as $u\to\infty$ and \eqref{infty}  holds. The parameters of $(\widetilde {Z}_t)_{0\leq t<\infty}$ are calculated as limits of \eqref{tilde-A-u} and \eqref{tilde-B-u}.
\end{proof}
\begin{remark}
Proposition \ref{P-u-infty} shows that $\mu_{K,\infty}$ is a stationary measure of
 an ASEP on $\{1,\dots, K\}$ with parameters $\widetilde \alpha=\alpha$,
$$
 \widetilde \beta=\frac{2 \beta  (1-q)}{1-q+\beta +\delta +\sqrt{4 \beta  \delta +(\beta -\delta +q-1)^2}},
$$
$\widetilde \delta=0$, $\widetilde\gamma=\gamma$ and the same $q$.

In particular, if $\delta=0$ then
$$\widetilde{\beta}=\begin{cases}
  1-q & \mbox{if $q+\beta>1$}, \\
  \beta & \mbox{if $q+\beta\leq 1$},
\end{cases}
$$
so when $\delta=0$ and $\beta\leq 1-q$ after taking the limits $N\to\infty$ and then $u\to\infty$, we recover the initial ASEP.

 \end{remark}
 \section*{Acknowledgement} %
JW research was supported in part by  grant 2016/21/B/ST1/00005 of the National Science Center, Poland.
  WB research was supported in part by the Taft Research Center at the University of Cincinnati. We thank Amir Dembo for  a helpful discussion and references on the semi-infinite ASEP.

\appendix
\newcommand{\yxts}{(y;x,t,s)}
\newcommand{\qnum}[1]{\left[#1\right]_{q}}                                      %
\newcommand{\qfact}[1]{\left[#1\right]_{q}!}                                    %
\newcommand{\qbin}[2]{\left[\begin{array}{c}#1 \\ #2 \end{array}\right]_{q}}   %

\section{Proof of Theorem \ref{Thm-gen-biPoisson}}\label{S:PofThm}
Denote $\qnum{n}=1+q+\dots+q^{n-1}$.
Consider the family of monic polynomials $\{Q_n\yxts: n=0,1,\dots\}$ in variable $y$,
defined by the three step recurrence:
\begin{equation}
\label{Q-def}
y\ Q_n\yxts =Q_{n+1}\yxts+\calA_n(x,t,s)Q_n\yxts+\calB_n(x,t,s)Q_{n-1}\yxts,
\end{equation}
with initial polynomials $Q_{-1}\equiv0$, $Q_0\equiv1$, where the coefficients in the recurrence are
\begin{equation*}
  \calA_n(x,t,s)=q^n x+\qnum{n}\left(t\eta+\theta-\qnum{2}q^{n-1}s\eta\right)\label{AAA},
\end{equation*}
\begin{equation*}
  \calB_n(x,t,s)=\qnum{n}\left(t-sq^{n-1}\right)\left\{1+\eta x
q^{n-1}+\qnum{n-1}\eta\left(\theta-s\eta q^{n-1}\right)\right\}\label{BBB},
\end{equation*}
for $n\ge0$. (For $n=0$, the formulas should be interpreted as $\calA_0(x,t,s)=x$, $\calB_0(x,t,s)=0$.)

It is known, see \cite{Bryc-Matysiak-Wesolowski-04b}, that family $\{Q_n\yxts: n=0,1,\dots\}$ is orthogonal with respect to the transition probabilities
$\{P_{s,t}(x,dy)\}$ of the bi-$q$-Poisson process, i.e.
\begin{equation}
  \label{Q-orth}
  \int Q_n\yxts Q_m\yxts P_{s,t}(x,dy) =0 \mbox{ for $m\ne n$}.
\end{equation}
Polynomials $M_n(y;t):=Q_n(y;0,t,0)$ are martingale polynomials, i.e.,
\begin{equation}
  \label{mart}
  \int M_n(y;t)P_{s,t}(x,dy)=M_n(x;s)
\end{equation} and \eqref{Q-def} simplifies to
 the three step recurrence:
\begin{equation}\label{Q-Poisson}
x M_n(x;t) \\=M_{n+1}(x;t)+(\theta+t\eta)[n]_qM_n(x;t)+t(1+\eta\theta[n-1]_q)[n]_qM_{n-1}(x;t),
\end{equation}
$n\geq 0$, with $M_{-1}=0,M_0=1$. In  particular, it is clear that $M_n(x;t)$ is a polynomial in both $x$ and $t$.

In view of \eqref{mart}, the action of the infinitesimal generator on $M_n(y;t)$ is simply
\begin{equation*}
  A_t(M_n(\cdot;t))(x)=-\frac{\partial}{\partial t}M_n(x;t);
\end{equation*}
this is easiest seen by looking at the left-generator, compare \cite[Lemma 2.1]{Bryc-Wesolowski-2013-gener}. Note that by linearity this determines action of $A_t$ on all polynomials:
for $p(x)=\sum_{k} a_k(t)M_k(x;t)$, we have
\begin{equation}
  \label{A(expand)}
  A_t(p)(x)=-\sum_k a_k(t) \frac{\partial}{\partial t}M_k(x;t).
\end{equation}

It is enough to determine action on polynomials of the related operator $H_t(p)(x):=A_t(x p(x))-x A_t(p)(x)$, compare \cite[Eqtn. (13)]{Bryc-Wesolowski-2013-gener}, and
it is enough to determine action of $H_t$ on polynomials $M_n(x;t)$.
\begin{lemma}
  \label{L:HM}
 \begin{equation}
   \label{HonM}
    H_t(M_n(\cdot;t))(x)=\eta \qnum{n} M_n(x;t)+(1+\eta\theta\qnum{n-1})\qnum{n}M_{n-1}(x;t).
 \end{equation}
\end{lemma}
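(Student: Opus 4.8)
The plan is to derive \eqref{HonM} directly from the three-step recurrence \eqref{Q-Poisson}, using the two facts already in hand: that $A_t(M_n(\cdot;t))(x)=-\tfrac{\partial}{\partial t}M_n(x;t)$ by \eqref{mart} (via the left-generator), and that $A_t$ is a linear operator on polynomials in the single spatial variable, so it commutes with multiplication by any quantity not depending on $x$. In \eqref{Q-Poisson} the coefficients $\theta+t\eta$, $t(1+\eta\theta\qnum{n-1})\qnum{n}$ and $\qnum{n}$ are, for each fixed $t$, such $x$-free scalars, so $A_t$ passes through them. Recall that $H_t(p):=A_t(x\,p(x))-x\,A_t(p)(x)$, so everything will be phrased in terms of $H_t(M_n(\cdot;t))(x)$.

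First I would apply $A_t$ to both sides of \eqref{Q-Poisson}. The left-hand side becomes $A_t\big(x\,M_n(\cdot;t)\big)(x)$, and the right-hand side, after using $A_t(M_k(\cdot;t))=-\partial_t M_k(\cdot;t)$ and passing $A_t$ through the $x$-free coefficients, becomes
\begin{multline*}
A_t\big(x\,M_n(\cdot;t)\big)(x)=-\tfrac{\partial}{\partial t}M_{n+1}(x;t)\\
-(\theta+t\eta)\qnum{n}\tfrac{\partial}{\partial t}M_n(x;t)-t(1+\eta\theta\qnum{n-1})\qnum{n}\tfrac{\partial}{\partial t}M_{n-1}(x;t).
\end{multline*}
Subtracting $x\,A_t(M_n(\cdot;t))(x)=-x\,\tfrac{\partial}{\partial t}M_n(x;t)$ gives an explicit formula for $H_t(M_n(\cdot;t))(x)$ in terms of $\tfrac{\partial}{\partial t}M_{n-1}$, $\tfrac{\partial}{\partial t}M_n$, $\tfrac{\partial}{\partial t}M_{n+1}$ and the term $x\,\tfrac{\partial}{\partial t}M_n$. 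Second, I would differentiate the recurrence \eqref{Q-Poisson} itself in $t$. Since $\qnum{n}$, $\qnum{n-1}$, $\eta$, $\theta$ carry no $t$-dependence, the product rule produces precisely two extra terms, $\eta\qnum{n}M_n(x;t)$ and $(1+\eta\theta\qnum{n-1})\qnum{n}M_{n-1}(x;t)$, besides the $\partial_t$-versions of the three terms already present. Rearranging so that $x\,\tfrac{\partial}{\partial t}M_n$ together with the three $\tfrac{\partial}{\partial t}M_\bullet$ terms stand on one side, that side is exactly the expression for $H_t(M_n(\cdot;t))(x)$ obtained in the first step, and the other side is $\eta\qnum{n}M_n(x;t)+(1+\eta\theta\qnum{n-1})\qnum{n}M_{n-1}(x;t)$, which is \eqref{HonM}.

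There is no genuine obstacle: the whole argument is a two-line manipulation of \eqref{Q-Poisson} and its $t$-derivative. The two places deserving a word of care are the justification that $A_t$ commutes with the $x$-free coefficients of the recurrence (immediate from linearity and \eqref{A(expand)}) and the bookkeeping of the product-rule terms in the $t$-differentiation — in particular the observation that $\qnum{n}$ and $\qnum{n-1}$ do not depend on $t$, so that the extra terms produced are exactly the two asserted in \eqref{HonM}. The degenerate case $n=0$ needs no separate treatment: with the conventions $M_{-1}\equiv0$, $M_0\equiv1$, $\qnum{0}=0$, the recurrence reduces to $x\,M_0=M_1$, its $t$-derivative to $\tfrac{\partial}{\partial t}M_1=0$, and both sides of \eqref{HonM} vanish.
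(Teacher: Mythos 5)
Your argument is correct and is essentially the paper's own proof: both apply $A_t$ term-by-term to the recurrence \eqref{Q-Poisson} for $xM_n(x;t)$ (using \eqref{A(expand)} and the fact that the coefficients are $x$-free), and both obtain the subtracted term $xA_t(M_n(\cdot;t))(x)=-\partial_t\bigl(xM_n(x;t)\bigr)$ by differentiating the same recurrence in $t$, so that only the two product-rule terms survive. The bookkeeping and the treatment of $n=0$ are fine.
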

\begin{proof}
  This proof is a minor variation of the proof of \cite[Lemma 2.2]{Bryc-Wesolowski-2013-gener}: we use \eqref{Q-Poisson} to write
  $x M_n(x;t)$ as a linear combination of $\{M_k(x;t)\}$, apply \eqref{A(expand)}, and then subtract
  $$x A_t(M_n(\cdot;t))(x)=-x\frac{\partial}{\partial t}M_n(x;t)=-\frac{\partial}{\partial t}\left(xM_n(x;t)\right),$$
  where we again use recursion \eqref{Q-Poisson}.
\end{proof}
\begin{lemma}
  \label{L:H-rep} If $p$ is a polynomial, then
  \begin{equation}
    \label{H-integral}
    H_t(p)(x)=(1+\eta x) \int \frac{p(y)-p(x)}{y-x} P_{q^2 t, t}(q(x-t\eta)+\theta,dy).
  \end{equation}
\end{lemma}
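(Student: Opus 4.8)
The plan is to reduce \eqref{H-integral} to an identity on the martingale polynomials $M_n(\cdot;t)$ and then establish that identity by induction on $n$. Both sides of \eqref{H-integral} are linear in $p$, and, since $M_n(\cdot;t)$ has degree $n$, the family $\{M_n(\cdot;t):n\ge 0\}$ is a basis of the polynomials for fixed $t$; hence it suffices to check \eqref{H-integral} for $p=M_n(\cdot;t)$. For the left-hand side we may use Lemma~\ref{L:HM}. For the right-hand side, write $w=q(x-t\eta)+\theta$ and
\[
c_n(x):=\int \frac{M_n(y;t)-M_n(x;t)}{y-x}\,P_{q^2 t,t}(w,dy),
\]
so that the target becomes $(1+\eta x)\,c_n(x)=\eta\,[n]_q\,M_n(x;t)+(1+\eta\theta[n-1]_q)[n]_q\,M_{n-1}(x;t)$.

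First I would obtain a closed recursion for $c_n$. Dividing the three-term recurrence \eqref{Q-Poisson} for $\{M_n(\cdot;t)\}$ by $y-x$ in the standard way gives
\[
\frac{M_{n+1}(y;t)-M_{n+1}(x;t)}{y-x}=M_n(y;t)+\bigl(x-(\theta+t\eta)[n]_q\bigr)\frac{M_n(y;t)-M_n(x;t)}{y-x}-t(1+\eta\theta[n-1]_q)[n]_q\,\frac{M_{n-1}(y;t)-M_{n-1}(x;t)}{y-x}.
\]
Integrating against $P_{q^2 t,t}(w,dy)$ and applying the martingale property \eqref{mart} with the earlier time $q^2 t$, i.e. $\int M_n(y;t)\,P_{q^2 t,t}(w,dy)=M_n(w;q^2 t)$, turns this into the scalar recursion
\[
c_{n+1}(x)=M_n\bigl(q(x-t\eta)+\theta;\,q^2 t\bigr)+\bigl(x-(\theta+t\eta)[n]_q\bigr)c_n(x)-t(1+\eta\theta[n-1]_q)[n]_q\,c_{n-1}(x),
\]
with $c_0\equiv 0$ and $c_1\equiv 1$. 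It is exactly the martingale identity \eqref{mart} that makes this recursion close, so that no higher ``moment directions'' of $P_{q^2 t,t}(w,\cdot)$ enter.

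Multiplying by $(1+\eta x)$, I would then prove by induction on $n$ that $G_n:=(1+\eta x)\,c_n$ equals $R_n:=\eta\,[n]_q\,M_n(\cdot;t)+(1+\eta\theta[n-1]_q)[n]_q\,M_{n-1}(\cdot;t)$. The base cases $n=0,1$ are immediate; for the inductive step one uses the recursion above together with \eqref{Q-Poisson} at time $t$ to simplify $R_{n+1}-(x-(\theta+t\eta)[n]_q)R_n+t(1+\eta\theta[n-1]_q)[n]_q R_{n-1}$, and the step reduces to the connection identity
\[
(1+\eta x)\,M_n\bigl(q(x-t\eta)+\theta;\,q^2 t\bigr)=\eta q^n M_{n+1}(x;t)+q^{n-1}\bigl(q+\eta\theta[2]_q[n]_q\bigr)M_n(x;t)+q^{n-1}\theta(1+\eta\theta[n-1]_q)[n]_q\,M_{n-1}(x;t),
\]
a composition formula for the bi-Poisson martingale polynomials under the affine substitution $x\mapsto q(x-t\eta)+\theta$, $t\mapsto q^2 t$. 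This identity is itself proved by induction on $n$ from \eqref{Q-Poisson} used at the two times $t$ and $q^2 t$ (it is also implicit in \cite{Bryc-Matysiak-Wesolowski-04b}).

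The hard part is the connection identity in the last display; the reductions leading to it are routine, but verifying it takes some careful $q$-bookkeeping (for instance $[n+1]_q-[n]_q=q^n$ and $[n+1]_q-[n-1]_q=q^{n-1}[2]_q$). An equivalent route expands the divided difference $\frac{M_n(y;t)-M_n(x;t)}{y-x}$ directly in the polynomials $Q_j(\cdot;w,t,q^2 t)$ orthogonal for $P_{q^2 t,t}(w,\cdot)$, so that integration picks out the coefficient of $Q_0\equiv 1$, and then propagates coefficients through \eqref{Q-def}; this reorganizes rather than removes the algebra. One should also record the harmless technical point that $q(x-t\eta)+\theta$ stays in the support of $X_{q^2 t}$ for $x$ in the support of $X_t$, so that $P_{q^2 t,t}(q(x-t\eta)+\theta,\cdot)$ and \eqref{mart} apply; since every expression in sight is a polynomial in $x$, it suffices that this holds on a set with an accumulation point.
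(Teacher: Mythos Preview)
Your argument is correct in outline, but it takes a genuinely different route from the paper's proof. You set up a scalar recursion for $c_n(x)$ via the divided three-term recurrence and the martingale identity \eqref{mart} at the fixed times $q^2t<t$, and then the whole burden falls on the ``connection identity'' expressing $(1+\eta x)M_n\bigl(q(x-t\eta)+\theta;\,q^2t\bigr)$ in the $M_k(\cdot;t)$ basis. That identity is true (and your $n=0,1$ cases check), but proving it by induction from \eqref{Q-Poisson} at the two times is a nontrivial $q$-computation that you have only outlined; this is the honest ``hard part'' you flag.

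The paper sidesteps the connection identity altogether. Instead of working at the fixed transition $P_{q^2t,t}(w,\cdot)$, it expands $M_n(\cdot;t)$ in the family $\{Q_k(\cdot;x,t,s)\}$ orthogonal with respect to $P_{s,t}(x,\cdot)$ for a \emph{variable} $s<t$, observes that $W_{k-1}(y;x,t)=Q_k(y;x,t,t)/(y-x)$ is orthogonal for $\nu_{x,t}(dy)=P_{q^2t,t}(q(x-t\eta)+\theta,dy)$, and takes the limit $s\to t$ under the integral. All terms with $k\ge 2$ vanish by orthogonality of $\{W_k\}$, so the integral reduces to the single coefficient $b_{n,1}(x,t,s)$. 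That coefficient has a closed form because $Q_1(y;x,t,s)=y-x$ and $\int (y-x)^2\,P_{s,t}(x,dy)=(t-s)(1+\eta x)$; the martingale property then gives $(1+\eta x)b_{n,1}$ directly as the right-hand side of \eqref{HonM}, with the $t-s$ canceling. This is considerably shorter: the limiting argument and the explicit variance formula replace your connection identity, so no induction on $n$ is needed.

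Your ``equivalent route'' in the last paragraph---expanding the divided difference in $Q_j(\cdot;w,t,q^2t)$---is close in spirit to the paper's idea, but the paper's choice of expanding in $Q_j(\cdot;x,t,s)$ with $s\to t$ is what makes the coefficient computation collapse.
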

\begin{proof}
 Fix $t>0$.
 The first step is to go back to recursion \eqref{Q-def}, and notice that $\nu_{x,t}(dy):=P_{q^2 t, t}(q(x-t\eta)+\theta,dy)$
 is the orthogonality measure of the polynomials $W_n(y;x,t)=Q_{n+1}(y;x,t,t)/(y-x)$, $n=0,1,\dots$. This is because polynomials $\{W_n(y;x,t)\}$
 satisfy the three step recursion
 \begin{multline}
  y W_n(y;x,t)=W_{n+1}(y;x,t)+ \left(q^{n+1}x+\qnum{n+1}(t\eta+\theta-\qnum{2}q^n\eta t)\right)W_n(y;x,t)
  \\ + t\qnum{n+1}(1-q^n)\left(1+\eta x q^n +\qnum{n}\eta(\theta-t\eta q^n)\right)W_{n-1}(y;x,t),
 \end{multline}
which is derived from \eqref{Q-def} with $s=t$.

Clearly, \eqref{H-integral} holds for a constant polynomial.  By linearity, it is enough to verify \eqref{H-integral} for
 $p(x)=M_n(x;t)$ with $n\geq 1$, in which case the left hand side is given by \eqref{HonM}.
We want to show that the right hand side is given by the same expression.

For a fixed  $s<t$ and $n\geq 1$, we write
 polynomial $M_n(y;t)$ as a linear combination of the (monic) polynomials $\{Q_k\yxts:k\geq 0\}$ with coefficients $\{b_{n,k}(x,t,s):k=0,\dots,n\}$.
 Since $Q_0\yxts=1$ and  $Q_1\yxts=y-x$,  and $\nu_{x,t}(dy)$ is a probability measure, we get
\begin{multline*}
  \int \frac{M_n(y;t)-M_n(x;t)}{y-x}\nu_{x,t}(dy)=\sum_{k=1}^n b_{n,k}(x,t,s)\int \tfrac{Q_k\yxts-Q_k(x;x,t,s)}{y-x}\nu_{x,t}(dy)
  \\=b_{n,1}(x,t,s)+\sum_{k=2}^n b_{n,k}(x,t,s)  \int \frac{Q_k\yxts-Q_k(x;x,t,s)}{y-x}\nu_{x,t}(dy).
\end{multline*}
Measure $\nu_{x,t}(dy)$ has compact support so we can pass to the limit as $s\to t$ under the integral. It is also known
 that  $ b_{n,k}(x,t,s)$ are continuous functions of $s$; in fact,
 from the explicit formulas for $\widetilde{b^{(n)}_{n-k}}(y;x,t,s)$ in \cite[page 627]{Bryc-Matysiak-Wesolowski-04b} we see that
 $ b_{n,k}(x,t,s)=\widetilde{b^{(n)}_{n-k}}(0;x,0,s)$ does not depend on $t$ and is a polynomial in $s$.
Noting that
$Q_n(x;x,t,t)=0$ for $n\geq 1$, see recursion \eqref{Q-def}, and recalling the definition of polynomials $\{W_k\}$,
we see that as $s\to t$ the   sum  converges to
$$\sum_{k=2}^n b_{n,k}(x,t,t) \int W_{k-1}(y;x,t)\nu_{x,t}(dy)$$ which is $0$ by  orthogonality.
Therefore,
\begin{equation}
  \label{*}(1+\eta x) \int \frac{M_n(y;t)-M_n(x;t)}{y-x}\nu_{x,t}(dy)=\lim_{s\to t}(1+\eta x) b_{n,1}(x,t,s).
\end{equation}

We now analyze the right hand side of \eqref{*}.  By orthogonality \eqref{Q-orth} we see that
$$b_1(x,t,s)=\frac{\int Q_1\yxts M_n(y;t)P_{s,t}(x,dy)}{\int Q_1^2\yxts P_{s,t}(x,dy)}.$$
Since $Q_1\yxts=y-x$, and   \cite[(2.27)]{Bryc-Matysiak-Wesolowski-04} implies that $\int (y-x)^2P_{s,t}(x,dy)=(t-s)(1+\eta x)$,
 we get
\begin{multline*}
(1+\eta x)b_1(x,t,s)=\frac{1}{t-s}\int (y-x) M_n(y;t)P_{s,t}(x,dy) \\=
\frac{1}{t-s}\left(\int y M_n(y;t)P_{s,t}(x,dy)-x \int   M_n(y;t)P_{s,t}(x,dy)\right).
\end{multline*} Using the three step recursion \eqref{Q-Poisson} and the martingale property of  polynomials $\{M_k(y;t)\}$ we get
\begin{multline*}
(1+\eta x)b_1(x,t,s)\\=
\frac{1}{t-s}\Big(\left(M_{n+1}(x;s)+(\theta+t\eta)[n]_qM_n(x;s)+t(1+\eta\theta[n-1]_q)[n]_qM_{n-1}(x;s)\right) \\
-
\left(M_{n+1}(x;s)+(\theta+s\eta)[n]_qM_n(x;s)+s(1+\eta\theta[n-1]_q)[n]_qM_{n-1}(x;s)\right)\Big)\\
=
 \eta [n]_qM_n(x;s)+(1+\eta\theta[n-1]_q)[n]_qM_{n-1}(x;s).
\end{multline*} In particular, $(1+\eta x)b_1(x,t,s)$ does not depend on $t$, and the right hand side of \eqref{*}
  matches the right hand side of  \eqref{HonM}. By linearity, this proves \eqref{H-integral} for all polynomials $p$.

\end{proof}

\begin{proof}[Proof of Theorem \ref{Thm-gen-biPoisson}]
From Lemma \ref{L:H-rep} we see that $H_t$ is given by \eqref{H-integral}.
 From \cite[Lemma 2.4]{Bryc-Wesolowski-2013-gener} multiplied by $1+\eta x$,
 we see that $A_t$ is then given by  \eqref{A}.
\end{proof}

 %%%%%%%%%%%%%% BBL

\end{document}